\newtheorem {thm}{Theorem}
\newtheorem {definition}[thm]{Definition}
\newtheorem{prop}[thm]{Proposition}
\newtheorem {lem}[thm]{Lemma}
\newtheorem {remark}[thm]{Remark}
\newtheorem {conj}[thm]{Conjecture}
\newtheorem {convention}[thm]{Convention}
\newtheorem {example}[thm]{Example}
\numberwithin{equation}{section}
\numberwithin{thm}{section}
\newcommand{\E}{\mathbb{E}}
\newcommand{\N}{\mathbb{N}}
\newcommand{\R}{\mathbb{R}}
\newcommand{\1}{\mathbf{1}}
\newcommand{\bu}{\mathbf{u}}
\newcommand{\bv}{\mathbf{v}}
\renewcommand{\P}{\mathbb{P}}
\newcommand{\Z}{\mathbb{Z}}
\begin{document}
\title[Finite and Infinite Rate Mutually Catalytic Branching]{Mutually Catalytic Branching Processes and Voter Processes with Strength of Opinion}
\author{Leif D\"oring}
\thanks{LD is supported by the Foundation Science Mat\'ematiques de Paris}
\address{LPMA, Universite Paris VI, Tours 16/26, 4 Place Jussieu, 75005 Paris}

\email{leif.doering@googlemail.com}
\author{Leonid Mytnik}
\thanks{LM is partly supported by the Israel Science Foundation and B. and G. Greenberg Research Fund (Ottawa) }
\address{Faculty of Industrial Engineering and Management Technion Israel Institute of Technology, Haifa 32000, Israel}
\email{leonid@ie.technion.ac.il}

\email{}
\thanks{}
\subjclass[2000]{Primary 60J80; Secondary 60J85}
\keywords{Mutually Catalytic Branching, Symbiotic Branching, Poissonian SPDE, Martingale Problem, Voter Process}

\maketitle

	\begin{abstract}
		Since the seminal work of Dawson and Perkins, mutually catalytic versions of superprocesses have been studied frequently. In this article we combine two approaches extending their ideas: the approach of adding 		correlations to the driving noise of the system is combined with the approach of obtaining new processes by letting the branching rate tend to infinity. The processes are considered on a countable site space.
\\
		We introduce infinite rate symbiotic branching processes which surprisingly can be interpreted as generalized voter processes with additional strength of opinions. Since many of the arguments go along 		the lines 		of known proofs this article is written in the style of a review article.
	\end{abstract}
	\tableofcontents

	Going back to the seminal work of Watanabe \cite{W} and Dawson \cite{D78}, the subject of measure-valued diffusion processes arising as scaling limits of branching particle systems has attracted the interest of many probabilists. Many tools had to be developed to study the fascinating properties of the Dawson/Watanabe process (also called superprocess or super-Brownian motion) and its relatives. Characterizations and constructions of the process via a Laplace transform duality to non-linear parabolic partial differential equations, infinitesimal generator and corresponding martingale problem, the pathwise lookdown construction of Donelly/Kurtz \cite{DK1}, \cite{DK2} or Le Gall's Brownian snake construction based on the Ray-Knight theorems (see the overview \cite{LG}) led to many deep results. Much of the analysis is based on the branching property, i.e. the sum of two independent super-Brownian motions $X_t^1,X_t^2$ is equal in distribution to a single super-Brownian motion 
 started at $X^1_0+X^2_0$.
	
	\smallskip
	In the early 90s further directions became popular. Super-Brownian motion was found to be a universal scaling limit not only of branching systems but also of interacting particle systems such as voter process and its modifications  (see 	for instance \cite{CDP}, \cite{CP05}). Furthermore, instead of considering plainly super-Brownian motion, interactions were introduced. Tools such as Dawson's generalized Girsanov theorem \cite{D78} have been successfully applied in various contexts. Here, we will be mostly interested in variants of catalytic super-Brownian motion, i.e. super-Brownian motion with underlying branching mechanism depending on a catalytic random environment. As long as the environment is fixed, a good deal of the analysis can still be performed with techniques developed for the super-Brownian motion. More delicately, taking into account connections to stochastic heat equations, 
	Dawson/Perkins introduced a mutually catalytic superprocess (see \cite{DP98}). Their mutually catalytic branching model on the continuous site space $\R$ consists of two super-Brownian motions each being the catalyst of branching for the other.    The model was described  via stochastic 	heat equations.  They considered
	\begin{equation*}
		\begin{cases}
			\frac{\partial }{\partial t}u_t(x) = \frac{1}{2}\Delta u_t(x)+\sqrt{ \gamma u_t(x) v_t(x)} \, W^1_t(dt,dx),\quad t\geq 0, x\in \R,\\
			\frac{\partial }{\partial t}v_t(x) = \frac{1}{2}\Delta v_t(x)+\sqrt{ \gamma u_t(x) v_t(x)} \, W^2_t(dt,dx),\quad t\geq 0, x\in \R,
		\end{cases}
	\end{equation*}
	driven by two independent white noises $\dot{W}^1, \dot{W}^2$ on $\R^+\times \R$.
 Here, $\Delta$ denotes the one dimensional Laplacian. 
	 The mutually catalytic interaction of two super-Brownian motions has one particular drawback: the branching property is destroyed so that many of the previously known tools collapse.
	Fortunately, some ideas borrowed from the study of interacting particle systems and interacting diffusion models could be applied successfully due to the symmetric nature of the model. In particular, a self-duality that extends the 	linear system duality known for interacting particle systems could be established and utilized to prove uniqueness and longterm properties. Besides the above continuous model, the mutually catalytic model on the lattice was constructed and studied by Dawson and Perkins as well.
	
	\smallskip
	This article, which is focused on spatial branching models on discrete space, is motivated by two recent developments. First, in the series of papers \cite{KM10}, \cite{KM10a},  \cite{KM10b} the effect of sending the branching rate $\gamma$ to infinity was studied  in the discrete space mutually catalytic branching model. 	The resulting  infinite rate 	mutually catalytic branching model is one of the rare tractable spatial models with finite $2-\epsilon$ moments but infinite $2$nd moment forcing the system to have 		critical scaling behavior.\\
	Secondly, Etheridge/Fleischmann \cite{EF04} introduced  the following generalization of 
   the Dawson-Perkins model. They considered the 
mutually catalytic branching model with correlated driving noises which, on the level of a branching system approximation, corresponds to a two type system of 	branching particles with correlated branching mechanism. They called their model symbiotic branching model in contrast to the mutually catalytic branching model of Dawson/Perkins that appears as a special case for zero 		correlations. We will use equally the name symbiotic branching and mutually catalytic branching with correlations.	
	Correlating the branching mechanism might seem artificial on first 	view. On second view one observes that the extremal correlations lead to well-known models from the theory of interacting diffusion models: the stepping stone model with 		applications in theoretical biology and a parabolic Anderson model with applications in statistical physics. As those models  have very different path behavior one could expect phase-transitions occurring when changing the 			correlations. On the level of moments those phase transitions have been revealed recently in \cite{BDE11}: there is a precise transition for $2$nd moments when the correlation parameter changes from negative to positive.
	
	\smallskip
	The main result of this article, formulated here in a slightly simplified version, is the following theorem which should be viewed as the natural combination of the two aforementioned developments. In particular, the theorem below  extends results from \cite{KM10b} to the case 
of ``correlated (symbiotic) branching''. 
	\begin{thm}\label{0}
		Suppose $\varrho\in (-1,1)$ is a parameter and $(u_t^\gamma,v_t^\gamma)_{t\geq 0}$ is the unique non-negative weak solution to the symbiotic branching model on the lattice defined by
			\begin{align}\label{ss}
			\begin{cases}
				d u_t(k)=\Delta u_t(k)\,dt+\sqrt{\gamma v_t(k)u_t(k)}\,dB^1_t(k),\quad k\in\Z^d,\\
				d v_t(k)=\Delta v_t(k)\,dt+\sqrt{\gamma v_t(k)u_t(k)}\,dB^2_t(k),\quad k\in \Z^d.
			\end{cases}
		\end{align}
		 Here, $\Delta$ denotes the discrete Laplacian on $\Z^d$
		 \begin{align*}
		 	\Delta f(k)=\sum_{|i-k|=1}\frac{1}{2d} (f(i)-f(k))
		 \end{align*}	
		 and the driving Gaussian process $\{B^1_\cdot(k),B^2_\cdot(k)\}_{k\in\Z^d}$ has correlation structure
		\begin{align}\label{ss2}\begin{split}
			\E\big[B^1_{t}(k)B^1_{t}(j)\big]&=\delta_0(k-j)t,\\
			\E\big[B^2_{t}(k)B^2_{t}(j)\big]&=\delta_0(k-j)t,\\
			\E\big[B^1_{t}(k)B^2_{t}(j)\big]&=\varrho\delta_0(k-j)t.\end{split}
		\end{align}
		Additionally, assume that the non-negative initial conditions $(u_0^\gamma,v_0^\gamma)=(U_0,V_0)$ do not depend on $\gamma$, satisfy a minor growth condition (for the precise definitions see~(\ref{111}) and Section~\ref{sec:2.1.1}) and also
		\begin{align*}
			(U_0(k),V_0(k))\in E:=\big\{(y_1,0):y_1\geq 0\big\}\cup \big\{ (0,y_2):y_2\geq 0\big\}\subset \R^2
		\end{align*}
		for all $k\in \Z^d$.
		\smallskip
		
		Then $(u^\gamma,v^\gamma)$ converges, as $\gamma$ tends to infinity, weakly in the Meyer-Zheng ``pseudo-path" topology (introduced  in~\cite{MZ}),  to a limiting RCLL process $(U_t,V_t)_{t\geq 0}$ taking values in $E$
		which is the unique weak solution to the system of Poissonian integral equations
		\begin{align}\label{UV}\begin{split}
			U_t(k)&=U_0(k)+\int_0^t\Delta U_s(k)\,ds\\
				&+\int_0^t\int_0^{I_s(k)}\int_{E}\big(y_2 V_{s-}(k)+(y_1-1)U_{s-}(k)\big)(\mathcal{N-N'})(\{k\},d(y_1,y_2),dr,ds),\\
			V_t(k)&=V_0(k)+\int_0^t\Delta V_s(k)\,ds\\
				&+\int_0^t\int_0^{I_s(k)}\int_{E}\big(y_2 U_{s-}(k)+(y_1-1)V_{s-}(k)\big)(\mathcal{N-N'})(\{k\},d(y_1,y_2),dr,ds),\end{split}
		\end{align}
		for $t\geq 0$ and $k\in \Z^d$.	Here,
${\mathcal N}$ is a  Poisson point measure on $\Z^d\times E\times (0,\infty)\times (0,\infty)$ with intensity measure
		\begin{align*}
			\mathcal N'(\{k\},d(y_1,y_2),dr,ds)=\nu^\varrho(d(y_1,y_2))\,dr\,ds,\;\;\forall k\in \Z^d,
		\end{align*}
		where
		\begin{align*}
			\nu^\varrho(d(y_1,y_2))=\begin{cases}
				{p(\varrho)^2}\sqrt{1-\varrho^2}\frac{y_1^{p(\varrho)-1}}{\pi\big(y_1^{p(\varrho)}-1\big)^2}\,du&: y_2=0,\\
				{p(\varrho)^2}\sqrt{1-\varrho^2} \frac{y_2^{p(\varrho)-1}}{\pi\big(y_2^{p(\varrho)}+1\big)^2}\,dv&: y_1=0,
				\end{cases}
		\end{align*}
		\begin{align*}
		p(\varrho)=\frac{\pi}{\frac{\pi}{2}+\arctan\big(\frac{\varrho}{\sqrt{1-\varrho^2}}\big)}\,,
		\end{align*}
		 and, for any $k\in \Z^d$,  
	\begin{align*}
		I_t(k)&=\begin{cases}
		        	\frac{\Delta V_{t-}(k)}{U_{t-}(k)}&:U_{t-}(k)>0,\\
		        	\frac{\Delta U_{t-}(k)}{V_{t-}(k)}&:V_{t-}(k)>0,\\
			0&:U_{t-}(k)=V_{t-}(k)=0.
		       \end{cases}
	\end{align*}
\end{thm}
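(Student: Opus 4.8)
The plan is to follow the scheme of \cite{KM10b} while tracking the correlation parameter $\varrho$ through all estimates. The argument naturally splits into three parts: (i) a priori bounds uniform in $\gamma$, (ii) tightness in the Meyer--Zheng topology and identification of limit points as solutions of (\ref{UV}), and (iii) uniqueness of the limiting martingale problem.

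First I would establish the uniform moment bounds. The key structural feature is that for the symbiotic system (\ref{ss}) the difference $u^\gamma-v^\gamma$ is a martingale (after subtracting the Laplacian drift) whose quadratic variation does \emph{not} involve $\gamma$ when $\varrho\neq 1$; more precisely, $d\langle u^\gamma(k)-v^\gamma(k)\rangle_t = 2(1-\varrho)\gamma u^\gamma_t(k)v^\gamma_t(k)\,dt$, while the ``mixed'' quantity $u^\gamma v^\gamma$ can be controlled because the self-duality of the symbiotic branching model (which holds for all $\varrho\in(-1,1)$, see e.g. \cite{EF04}) gives exponential-moment bounds on $\int_0^t u^\gamma_s(k)v^\gamma_s(k)\,ds$ that are uniform in $\gamma$. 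From these one extracts: uniform $(2-\epsilon)$-moment bounds on $(u^\gamma_t,v^\gamma_t)$, a uniform bound showing the ``collision local time'' $L^\gamma_t(k):=\gamma\int_0^t u^\gamma_s(k)v^\gamma_s(k)\,ds$ is tight, and the crucial one-sided property that $u^\gamma_t(k)\wedge v^\gamma_t(k)\to 0$ in probability as $\gamma\to\infty$, which forces any limit point to live in $E$. I would phrase all of this through the martingale problem for (\ref{ss}) and Burkholder--Davis--Gundy, exactly as in \cite{KM10}, \cite{KM10b}, inserting $1-\varrho^2$ and the exponent $p(\varrho)$ wherever the $\varrho=0$ computation produced a constant.

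Next, tightness in the Meyer--Zheng ``pseudo-path'' topology is cheap: by the criterion of \cite{MZ} it suffices to have, for each coordinate $k$, a uniform bound on the expected total variation of the finite-variation parts plus uniform bounds on the conditional variation, both of which follow from step (i) since the drift is $\int_0^t\Delta u^\gamma_s(k)\,ds$ and the martingale part has uniformly bounded $(2-\epsilon)$-th moment. Having tightness, I pass to a subsequential limit $(U,V)$ and must identify it. Here the main work is a \emph{convergence of generators} computation: writing the generator $\mathcal G^\gamma$ of (\ref{ss}) and testing against functions of the form $f(u(k),v(k))$, one shows that on the time-scale where $u^\gamma v^\gamma$ is of order $1/\gamma$ the fast branching, after subtracting the (self-dual, hence explicitly solvable) one-site diffusion $d x = \sqrt{\gamma xy}\,dB^1$, $dy=\sqrt{\gamma xy}\,dB^2$, produces in the limit precisely a jump at rate governed by the exit distribution of this two-dimensional diffusion from the positive quadrant. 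That exit distribution — the law of where $(u^\gamma,v^\gamma)$ lands on $E$ when it first hits a coordinate axis, appropriately scaled — is a stable-type computation whose density is exactly $\nu^\varrho$ with the exponent $p(\varrho)$; this identification of $\nu^\varrho$ via the correlated planar diffusion (equivalently, a time-changed correlated Brownian motion run until it exits a quadrant, whose exit law is a Cauchy-type density with parameter determined by the opening angle $\frac\pi2+\arctan(\varrho/\sqrt{1-\varrho^2})$) is the technical heart of the paper and the step I expect to be hardest to make rigorous, since it requires controlling the approximation uniformly and showing the ``rate'' $I_t(k)=\Delta V_{t-}(k)/U_{t-}(k)$ emerges correctly from the interplay between the slow Laplacian mass-transport and the fast branching.

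Finally, for uniqueness of the limit I would set up the martingale problem associated to (\ref{UV}) and prove it is well-posed. Because the process lives on $E$, at each site one of the two coordinates is zero, and the generator reduces to a Laplacian transport term plus a pure-jump term of voter-process type with the ``strength of opinion'' recorded by the magnitude; uniqueness should follow either by extending the self-duality of the finite-rate model to a self-duality for (\ref{UV}) (the duality function being the same $\exp$-pairing, with the Laplace exponent now computed against $\nu^\varrho$), or, failing that, by a Gronwall argument on a countable-site $L^1$-type norm using the minor growth condition on $(U_0,V_0)$ to control the infinite sum over $k\in\Z^d$ — this is exactly the route taken in \cite{KM10b} and the correlation enters only through the explicit constants in the jump kernel, so no new difficulty arises here beyond bookkeeping. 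Combining well-posedness with the subsequential convergence from step (ii) upgrades convergence to the full family $\gamma\to\infty$ and completes the proof.
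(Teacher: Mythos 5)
Your overall skeleton (a priori bounds, Meyer--Zheng tightness, identification, uniqueness via self-duality) matches the paper's, and you correctly anticipate both that the self-duality function controls uniqueness and that the exponent $p(\varrho)$ comes from the exit problem for $\varrho$-correlated Brownian motion in the quadrant. However, your identification step is a genuinely different --- and substantially harder --- route than what the paper actually does, and I don't think it would go through without material new ideas.

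You propose to identify the limit by a ``convergence of generators'' / fast--slow averaging argument: subtract off the fast one-site diffusion, argue that on the $O(1/\gamma)$ time scale it equilibrates to its exit law, and read off $\nu^\varrho$ and the rate $I_t(k)$ from this averaging. You yourself flag this as ``hardest to make rigorous,'' and indeed that is exactly the step the paper avoids. The paper's trick (Proposition~\ref{prop:mart1}) is that when you apply It\^o to $F(u^\gamma_t,v^\gamma_t,y_1,y_2)$ with the self-duality function $F$ and test sequences $(y_1,y_2)\in L^{f,E}$ satisfying $y_1(k)y_2(k)=0$, the term carrying the factor $4(1-\varrho^2)\gamma\, y_1(k)y_2(k) u^\gamma_s(k)v^\gamma_s(k)$ vanishes \emph{identically}, so every finite-$\gamma$ process satisfies exactly the same $\gamma$-independent martingale problem (\ref{mp1}). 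Identification of subsequential limits then reduces to showing that these martingales converge to a martingale and that the limit lives in $L^{2,E}_\beta$ (the latter via stochastic boundedness of $\gamma\int_0^t u^\gamma_s v^\gamma_s\,ds$), with no averaging argument whatsoever. The explicit jump kernel $\nu^\varrho$ and the rate $I_t(k)$ are never extracted from the limiting procedure; they are introduced \emph{a priori} in the Poissonian equations (\ref{eqn:st}), whose solutions are constructed by interlacing truncated versions (Theorem~\ref{exis}) and then verified, via a separate It\^o computation using the vanishing identity (\ref{eh}) for the exit law $Q^\varrho_{(1,\epsilon)}$, to satisfy the same $\gamma$-free martingale problem (Proposition~\ref{thm:s}). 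Uniqueness of that martingale problem, by the self-duality you correctly identify, closes the loop. So in your plan, replace the generator-convergence step by the observation that the self-duality martingale problem is already $\gamma$-free on $L^{f,E}$ test functions, and add the missing Poissonian existence construction; the fast--slow averaging you describe is a nice heuristic motivation for the form of $\nu^\varrho$ (cf. the discussion around (\ref{sde}) in Section~\ref{sec:3}) but it is not the mechanism of the proof and would be very costly to make rigorous.

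One smaller point: your claim that $d\langle u^\gamma(k)-v^\gamma(k)\rangle_t=2(1-\varrho)\gamma u^\gamma_t(k)v^\gamma_t(k)\,dt$ ``does not involve $\gamma$'' is misleading as written --- the factor $\gamma u^\gamma v^\gamma$ is present. What saves you is that $\gamma\int_0^t u^\gamma_s v^\gamma_s\,ds$ is stochastically bounded uniformly in $\gamma$ (as the time change bounded by the exit time $\tau$, Lemma~\ref{la:eds} / Lemma~\ref{ete}), which is the correct underlying observation but should be stated that way.
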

\begin{remark}
Theorem \ref{0} will be proved in Section \ref{sec:3} for more general countable 
  state-space $S$ instead of $\Z^d$ and $Q$-matrix $\mathcal A$ instead of $\Delta$. The proof of the theorem follows from Theorems \ref{pro:1} and \ref{thm:111}.
\end{remark}
The parameter $\varrho$ only occurs in the measure $\nu^\varrho$ so that it does not surprise that  proofs go along the lines of \cite{KM10b} replacing in their Poissonian equations $p(0)=2$ by some  $p(\varrho)>1$. The striking fact of the generalization to $\varrho\neq 0$ is that it allows to understand $(U,V)$ as a family of generalized voter processes with the standard voter process appearing for $\varrho=-1$.

The generalized voter process interpretation goes as follows: Suppose at each site $k\in \Z^d$ lives a voter with one of two possible \textbf{opinions}. Their opinions additionally have a non-negative \textbf{strength}. 
		Mathematically speaking, the type of opinion is determined by the non-zero coordinate of the 
		opinion-vector (recall the definition of $E$) and the strength is determined by the absolute value, i.e.
		\begin{itemize}
			\item $(u,0)\in E$ codes opinion $1$ of strength $u$,
			\item $(0,v)\in E$ codes opinion $2$ of strength $v$.
		\end{itemize}
		Formulated like this, the standard voter process only takes values $(1,0)$ and $(0,1)$ since all opinions do have a fixed strength, say $1$.
		If $u$ (resp. $v$) is large, we say the opinion is strong, otherwise weak.\\ Voters change dynamically their opinions and their strength according to the next two possibilities:
		\begin{itemize}
			\item \textbf{Change of opinion strength only}: Suppose $\mathcal N$ has an atom at $(k,(y_1,0),s,r)$. Then, by definition of the two integrands, the Poissonian integrals produce two-dimensional jumps of the form
				\begin{align*}
					0 \left(V_{s-}(k)\atop U_{s-}(k)\right)+(y_1-1)\left(U_{s-}(k)\atop V_{s-}(k)\right)
				\end{align*}
				so that, added to the current state of the system, the state of the system at site $k$ changes according to
				\begin{align*}
					\left(U_{s-}(k)\atop V_{s-}(k)\right)\mapsto y_1\left(U_{s-}(k)\atop V_{s-}(k)\right).
				\end{align*}
				If before the jump the voter had opinion $1$ of strength $u$, the change is $(u,0)\mapsto (y_1 u,0)$ and $(0,v)\mapsto (0,y_1v)$ if the voter had opinion $2$ before. Hence, if $(y_1,0)$ is chosen by 				the basic jump measure $\nu^\varrho$, only the strength of the opinion changes but not the type.
			\item \textbf{Change of opinion and its strength}: Suppose $\mathcal N$ has an atom at $(k,(0,y_2),s,r)$.  Then, by definition of the integrands, the Poissonian integrals produce jumps of the form
				\begin{align*}
					y_2\left(V_{s-}(k)\atop U_{s-}(k)\right)+(0-1)\left(U_{s-}(k)\atop V_{s-}(k)\right)
				\end{align*}
				so that, added to the current state of the system, the state of the system at site $k$ changes according to
				\begin{align*}
					\left(U_{s-}(k)\atop V_{s-}(k)\right)\mapsto y_2\left(V_{s-}(k)\atop U_{s-}(k)\right).
				\end{align*}
				If before the jump the voter had opinion $1$ of strength $u$, the change is $(u,0)\mapsto (0,y_2 u)$ and $(0,v)\mapsto (y_2v,0)$ if the voter had opinion $2$ before. Hence, if $(0,y_2)$ is chosen by 				the basic jump measure $\nu^\varrho$, the voter changes strength and type of opinion.
		\end{itemize}
		\begin{remark}
		 We show in Section \ref{sec:voter2} that Theorem \ref{0} extends naturally to $\varrho=-1$ when $\nu^\varrho$ is replaced by $\delta_{(0,1)}$. If additionally $(U_0,V_0)\in \{(0,1),(1,0)\}^{\Z^d}$, then solutions to (\ref{UV}) give standard voter processes.		Note that in this case only the second type of changes occurs since $\mathcal N$ only has atoms at $(k,(0,1),s,r)$. Hence, the strength of the opinion does not change. In particular, we only see opinion changes from $(1,0)$ to $(0,1)$ and vice 		versa.
		\end{remark}	

		Finally, we should also give an interpretation to the rates $I_t(k)$: due to the definition of $I_t(k)$ and $\Delta$, the rate of change for the voter at site $k$ is high if the strength of the opinions of his neighbors of different opinion is high compared to his opinion.
		 In particular, voters with weak conviction tend to change quicker their opinions than voters with strong conviction.
\medskip

	The result of Theorem \ref{0} might look frightening to the reader not familiar with interacting diffusion processes and/or jump diffusions. However, once the connection to the results of \cite{KM10b} and \cite{BDE11} is understood, the proofs of 	the theorem go along the lines of \cite{KM10b}. Therefore, we decided to write this article in the form of a review article explaining in depth the background. We do not give many detailed proofs but instead give more detailed calculations to explain the origins of (\ref{UV}). In the following we explain carefully
	\begin{itemize}
		\item the background of catalytic branching processes,
		\item definitions, existence, uniqueness and tools for (\ref{ss}),
		\item what is known on the longtime behavior of (\ref{ss}) to motivate the choice of $\nu^\varrho$ in the theorem via planar Brownian motions exiting a cone,
		\item more details on (\ref{UV}) and (alternative) constructions of $(U,V)$,
		\item concepts and definitions for jump diffusions.
	\end{itemize}
		 The background and connections to well-known stochastic processes from the literature will be explained exhaustively in \textbf{Section \ref{sec:1}}. Two different routes from known models to mutually catalytic branching models are disclosed: the original motivation of Dawson/Perkins originating from catalytic super-Brownian motion and symbiotic branching as unifying model for some interacting diffusions. As a final motivation the connection of stepping stone processes and voter processes is recalled. \textbf{Section~\ref{sec:2}} is devoted to an overview of precise definitions, existence and 	uniqueness results and longtime properties for finite rate symbiotic branching 		processes. In particular, the second moment transitions are discussed in detail. Proofs are cooked down to the main ingredients. Finally, in \textbf{Section \ref{sec:3}} the infinite rate symbiotic branching processes are introduced and reinterpreted as generalized voter processes in the very end. Additionally, a brief summary of jump diffusions is included to the appendix.
	
\section{Background and Motivation}\label{sec:1}
\subsection{From Superprocesses to Mutually Catalytic Branching}
	Being a major subject of probability theory, measure-valued diffusions, or superprocesses, such as super-Brownian motion and the Fleming-Viot process have been well studied during the last three decades. Important properties of superprocesses have been proved and connections to other areas of mathematics such as partial differential equations have been established. 
 For a detailed exposition of the subject  the reader is referred to~\cite{dawson}, \cite{Perkins} and \cite{E00}.
	\smallskip
	
	Here we introduce briefly super-random walks - the spatially discrete analogues of super-Brownian motion.
   Studying these processes gave a strong motivation  to investigate spatial branching processes with interactions, and in particular, 
  mutually catalytic branching processes on discrete space - the main theme of this article. 
	To introduce super-random walks, we  start with the following approximating particle system.
   Assume that an initial configuration 
 of a large number  (of order $N$) of particles distributed over $\Z^d$ is given. 
The particles move as independent simple random walk in $\Z^d$ and each particle independently of the others dies after an exponential time of rate $\gamma N$, with $\gamma>0$, and at the place of death it leaves a random number of offspring particles, drawn from a fixed integer valued law $\mu$. The particles of the updated population continue their motion and reproduction according to the same rules. 
This process is usually referred to as a branching random walk with the branching law $\mu$ and we will assume in the sequel that $\mu$ has expectation $1$ (this means criticality) and finite variance $\sigma^2>0$. 
The process $X_t^{(N)}$ is then defined to be the finite atomic measure which loosely speaking gives measure of mass $1$  
to each particle alive at time $t$. To be more precise,
$$ X_t^{(N)}= \sum_{i}\delta_{x_i},\;\;t\geq 0,$$
where $x_i$ is a position of the $i$-th particle alive at time $t$. Assume that, as $N$ tends to infinity, $\frac{1}{N}X_0^{(N)}$ converges weakly in the space of finite measures on $\Z^d$ to a  measure $X_0$. Then one can show that the measure-valued process 
	$\{\frac{1}{N}X_{t}^{(N)}\}_{t\geq 0}$  converges weakly to a limiting measure-valued process $\{X_t\}_{t\geq 0}$ which is
 called super-random walk and is uniquely characterized via the following martingale problem: for bounded test-functions $\phi:\Z^d\to \R$
	\begin{align*}
		M_t(\phi):=\int_0^t\int_{\Z^d} \frac{1}{2}\Delta\phi(x)X_s(dx)\,ds- \int_{\Z^d} \phi(x) X_0(dx)
	\end{align*}
	is a square-integrable martingale with quadratic variation process
$\sigma^2\gamma \int_0^t \int_{\Z^d}\phi^2(x)  X_s(dx)ds.$
	Here, $\Delta$ denotes the discrete Laplace operator as defined in Theorem \ref{0}. 
   An interesting observation is the following invariance property: irrespectively of $\mu$, the finite variance assumption for the branching mechanism $\mu$ leads to a universal limit depending only on the variance $\sigma^2$ and the parameter $\gamma$ which is also called the branching rate. In what follows, we assume $\sigma^2=1$. 	
   It is worth mentioning that if we ignore the spatial motion and count just the total number of particles, the scaling procedure is nothing else but the scaling of critical and finite variance Galton-Watson processes which 
  leads  towards  classical  Feller's branching diffusion
	\begin{align*}
		dZ_t=\sqrt{\gamma Z_t}\,dB_t\,,
	\end{align*}
   where $Z_t=X_t(\Z^d), \; t\geq 0$.

Note that super-random walks can be characterized  as solutions to stochastic differential equations.
Abbreviating $u_t(k)=X_t(\{k\})$, the super-random walk is a weak solution to 
following system of stochastic differential equations (which is, in fact, a discrete version of a
stochastic heat equation)
	\begin{align}\label{e}
		d u_t(k)=\Delta u_t(k)\,dt+\sqrt{\gamma u_t(k)}\,d B_t(k),\quad k\in \Z^d,
	\end{align}
	where $\{B(k)\}_{k\in \Z^d}$ is a collection of  independent Brownian motions.
 Next, we  proceed to a more recent development: measure-valued processes with interactions.  One way to introduce interaction into the model 
is to replace the constant branching rate $\gamma$ in the particle approximation by a random, adapted and space-time varying branching rate $\gamma(t,k,\omega)$, also called the catalyst. Some particular choices of 			branching environments $\gamma$ and related models over continuous space have been discussed in the literature (see for instance \cite{DF94}, \cite{DF95}, \cite{D96}). For example, one can consider a super-random walk on 
	$\Z^d$ in a super-random walk environment. Building upon (\ref{e}), this model can be described as a solution to the following system of stochastic differential equations:	
	\begin{align}\label{f}
		\begin{cases}
			d u_t(k)=\Delta u_t(k)\,dt+\sqrt{v_t(k)u_t(k)}\,d B^1_t(k),\quad k\in \Z^d,\\
			d v_t(k)=\Delta v_t(k)\,dt+\sqrt{\gamma v_t(k)}\,d B^2_t(k),\quad k\in\Z^d,
		\end{cases}
	\end{align}
	driven by independent families of independent Brownian motions. A solution $(u_t)_{t\geq 0}$ is called super-random walk in the catalytic super-random walk environment $\gamma(\cdot,k,\omega)=v_\cdot(k)(\omega)$. 
	Note that (\ref{f}) describes the so-called one-way interaction model: the $v$-population catalyzes the $u$-population. Then the  natural extension of (\ref{f}) to two-way interaction is  the following mutually catalytic model. 
	\begin{definition}
		In the following, weak solutions $(u_t,v_t)_{t\geq 0}$, on a stochastic basis $(\Omega, \mathcal F,(\mathcal F_t)_{t\geq 0},\P)$, to the infinite system of stochastic differential equations (\ref{ss}) driven by independent Brownian motions
		will be called mutually catalytic branching processes with initial conditions $u_0,v_0$ and branching rate $\gamma>0$. To abbreviate, solutions will be denoted by \textbf{$\textrm{MCB}_\gamma$}.
In the sequel $\textrm{MCB}_\gamma$ will also denote a mutually catalytic branching process defined on a more general state space $S$ instead of $\Z^d$ and with $Q$-matrix $\mathcal A $ instead of $\Delta$.
\end{definition}

	It is easy to see that the branching property fails for $\textrm{MCB}_\gamma$. Hence, many of the classical tools developed for superprocesses also fail. Nonetheless, the simple symmetric choice of the interaction between 	$u$ and $v$ makes this mutually catalytic system tractable. 
	\begin{convention}
		In order to stress the underlying branching processes, the two components will be called types.
	\end{convention}
	As an example for the convention, if $u_t(k)=0$ for all $k\in \Z^d$ we will say that the first type died out.
		
\subsection{From Interacting Diffusions to Symbiotic Branching}\label{sec:inter}
	Interestingly, the study of mutually catalytic branching processes can also be motivated by the study of interacting diffusion processes. Given a family of independent Brownian motions $\{B_t(k)\}_{k\in\Z^d}$ and some function $f$ to be specified below, discrete-space parabolic stochastic partial differential equations
	\begin{eqnarray}\label{int}
	\begin{cases}
		d w_t(k)=\Delta w_t(k)\,dt + \sqrt{\gamma f(w_t(k))}\,dB_t(k),\\
		w_0(k)\geq 0,\quad k\in \Z^d,
	\end{cases}
	\end{eqnarray}
	have been studied extensively in the literature. Some prominent examples will be briefly discussed in the sequel. 
	\begin{example} \label{ex3}
		For $f(x)=x$ solutions of (\ref{int}) are super-random walks.
	\end{example}	
	This example has already been dealt with in detail in the previous subsection. 
	\begin{example}\label{ex1}
		For $f(x)=x(1-x)$, Equation (\ref{int}) is called stepping stone model.
	\end{example}
	In fact, the stepping stone model is the spatial generalization of the one-dimensional Wright-Fisher diffusion
	\begin{align}\label{wf}
		dX_t=\sqrt{\gamma X_t(1-X_t)}\,dB_t
	\end{align}
	that arises as a scaling limit of the Moran model in population genetics similarly as the Feller diffusion arises as a scaling limit of critical Galton-Watson processes. In contrast to the Galton-Watson model, the Moran model 	is not used to model the total number of individuals but instead counts the proportion of one allele in a diploid population for a fixed number of individuals. In particular, this interpretation corresponds to the solution of  (\ref{wf}) taking values in 	$[0,1]$ with absorption at $1$ or $0$ interpreted as fixation of genetic types. For an introduction to the questions of mathematical population genetics we refer to the lecture notes \cite{E10}. The stepping 	stone model of Example \ref{ex1} can be seen as an island version of the Wright-Fisher diffusion, i.e. additionally to the change of alleles, individuals live on islands which they change according to a nearest 		neighbor random walk.
	
		\smallskip
	Changing the scope once more, we have a look at statistical physics. Given a random field $\eta_t(k)$, possibly time-inhomogeneous, the discrete heat equation with random potential $\eta$
	\begin{align}\label{pam2}
	\begin{cases}
		\frac{\partial}{\partial t} u_t(k)=\Delta u_t(k)+\eta_t(k) u_t(k),\\
		u_0(k)\geq 0,\quad k\in \Z^d,
	\end{cases}
	\end{align}
	has attracted a lot of interest. It is usually referred to as a parabolic Anderson model. Again, there is a connection to a branching particle system: started at localized initial condition $u_0={\textbf 1}_{\{0\}}$, $u_t(k)$ is the 	expected number of particles in the system where one particle starts at $0$ and branches binary according to the breeding potential $\eta$. In particular in the case of time-independent iid random potential a detailed analysis 	of the behavior of solutions is possible; we refer to the overview article \cite{GK05}.
	If $\eta$ is the white noise case, then (\ref{pam2}) is a particular case of (\ref{int}) leading us to the next example.
	\begin{example}\label{pam}
		For $f(x)=x^2$, Equation (\ref{int}) describes the parabolic Anderson model with Brownian potential (white noise potential).
	\end{example}
	A detailed analytic study of the longtime behavior for this model can be found in the monograph \cite{CM}. For the probabilistic approach based on an explicit Feynman-Kac representation we refer to \cite{GdH07} and references therein. 

		\smallskip
	Finally, the simplest example should be mentioned. Already in this case, a non-trivial interplay of noise and drift can be observed (see \cite{CK00}).
	\begin{example}
		Choosing $f(x)=1/\gamma$, Equation (\ref{int}) describes interacting Brownian motions.
	\end{example}

	Now, as we have discussed examples that are of very different nature in terms of their origins and also of their properties, we should explain the connections to mutually catalytic models. Here is a preliminary definition for the two types interacting diffusion model introduced by Etheridge/Fleischmann in \cite{EF04}. A more precise and more general definition is given in Section 2.

	\begin{definition}\label{def1}
		In the following, weak solutions $(u_t,v_t)_{t\geq 0}$ on a stochastic basis $(\Omega, \mathcal F, (\mathcal F_t)_{t\geq 0},\P)$ to the infinite system of stochastic differential equations defined in (\ref{ss}) 
		driven by Brownian motions with correlation structure (\ref{ss2}) are called symbiotic branching processes with initial conditions $u_0,v_0$, branching rate $\gamma>0$ and correlation $\varrho\in [-1,1]$.\\ To abbreviate, the system of equations (\ref{ss}) and their solutions will be denoted by \textbf{$\textrm{SBM}_\gamma(\varrho)$} or just \textbf{$\textrm{SBM}_\gamma$}.
	\end{definition}

	The name symbiotic branching model was used in \cite{EF04} in order to stress the biological interpretation of the mutually catalytic behavior; the solution processes $u_t$ and $v_t$ might be considered as the distribution in space of two types.
	\begin{convention}
		For later use let us capture the correlation structure used for symbiotic branching in a name. We will say that two Brownian motions  satisfying $\E\big[B^1_tB^2_t\big]=\varrho t$, $t\geq 0$, are $\varrho$-correlated.
	\end{convention}
	
	\smallskip
	Having introduced the basic equations of this article, their relevance is emphasized by the following observation due to \cite{EF04}. For correlation $\varrho=0$, solutions of the symbiotic branching model are solutions of the mutually catalytic branching model $\textrm{MCB}_\gamma$. \\
	The case $\varrho=-1$ with the additional assumption $u_0+v_0\equiv 1$ corresponds to the stepping stone model. To see this, observe that in the perfectly negatively 	correlated case $B^1(i)=-B^2(i)$ which implies that the sum $u+v$ solves a discrete heat equation and with the further assumption $u_0+v_0\equiv 1$ stays constant for all time. Hence, for all $t\geq 0$, 
	$u_t\equiv 1-v_t$ which shows that $u$ is a solution of the stepping stone model with initial condition $u_0$ and $v$ is a solution with initial condition $v_0$. \\
	Finally, suppose $w$ is a solution of the parabolic 	Anderson model, then, for $\varrho=1$, the pair $(u,v):=(w,w)$ is a solution of the symbiotic branching model with initial conditions $u_0=v_0=w_0$ as now $B^1(i)=B^2(i)$.
	

\subsection{Infinite Rate Symbiotic Branching Processes and Voter Processes I}\label{sec:voter1}
	To motivate the procedure of sending $\gamma$ to infinity in Theorem \ref{0} and to highlight for a first time why the generalized voter processes appear as limits, 
let us briefly discuss the voter process and its connection to the stepping stone model. For extensive information about interacting particle systems we refer to the monograph of Liggett \cite{L}.
	
	\smallskip
	A way of defining interacting particle systems is a description via infinitesimal generators. Here, we assume that the voters live on $\Z^d$ and communicate only with their nearest neighbors. To define the dynamics via a 		generator, the 	state-space 
	\begin{align*}
		\Sigma=\{0,1\}^{\Z^d}
	\end{align*}
	is fixed. The generator acts via
	\begin{align}\label{gener}
		A f(\eta)=\sum_{k\in \Z^d}c(k,\eta)\big(f(\eta^{(k)})-f(\eta)\big),
	\end{align}
	on test-functions $f:\Sigma\to \R$ only depending on finitely many coordinates and $\eta^{(k)}$ is defined to be the configuration in which the opinion is flipped only at site $k$ and the rate of change at site $k$ is proportional  (normalized to total rate $1$) to 		the number of neighbors with different opinion:
	\begin{align*}
		c(k,\eta)=\frac{1}{2d}\sum_{|i-k|=1}\mathbf 1_{\{\eta(i)\neq \eta(k)\}}.
	\end{align*}
	Interestingly, the analysis of the longtime behavior of a voter process is drastically simplified by a pathwise graphical construction (see \cite{Dur}, page 129): for each voter a vertical line is drawn downwards and each line carries a Poisson 	process firing tacks on that line. At each tack, a horizontal line is drawn randomly to a neighbor. With an initial configuration $U_0\in \Sigma$, the construction goes as follows: for each site $k$ with $U_0(k)=1$ water is filled 	into the vertical line and disperses downwards. Whenever there is an arrow pointing away from the line (this corresponds to persuading a neighbor) the water goes on downwards and, additionally, flows through the arrow to 		disperse downwards in the neighbor's line. When an arrow points from a neighbor's line towards the voter's line the water stops (this corresponds to be persuaded by a neighbor). At time $t\geq 0$, the configuration $U_t$ is defined as follows: all sites filled by water carry a $1$ and al
 l others a $0$. It is heuristically clear that this construction yields a Markov process with generator (\ref{gener}) but interestingly it simultaneously gives a useful dual relation: reversing time and using the 	same arrows in the opposite direction, the resulting process is a system of instantaneously coalescing random walks. A simple consequence of this construction is a moment formula for the voter process:
	\begin{align}\label{mmm}
		\E^{U_0}\big[U_t(k_1)\cdots U_t(k_m)\big]&=\E\Big[\prod_i U_0(\xi_t^i)\,\Big|\,\xi^1_0=k_1,\cdots,\xi^m_0=k_m\Big],
	\end{align}
	where $\xi^1,...,\xi^m$ are independent simple random walks that coalesce instantaneously when colliding.
	The product runs over all non-coalesced random walks at time $t$.\\
	
	Now, let us return to the stepping stone model
	\begin{align}\label{bbbb}
			d w_t(k)=\Delta w_t(k)\,dt + \sqrt{\gamma w_t(k)(1-w_t(k))}\,dB_t(k)
	\end{align}
	that was already identified to the symbiotic branching process s $\varrho=-1$. Unfortunately, there is no direct graphical construction for the stepping stone model, but still, a moment representation similar to (\ref{mmm}) was derived in \cite{S80}: suppose the $\xi^i$ are as above but 	now two particles coalesce when they have spent together an independent exponential time of rate $\gamma$. More precisely, suppose $w^\gamma$ is a solution of (\ref{bbbb}) with initial conditions $w^\gamma_0$. Then
	\begin{align*}
		\E\big[w^\gamma_t(k_1)\cdots w^\gamma_t(k_m)\big]=\E\Big[\prod_i w^\gamma_0(\xi_t^i)\,\Big|\,\xi^1_0=k_1,\cdots,\xi^n_0=k_m\Big],
	\end{align*}
	where again the product runs over all random walks alive at time $t$. Sending $\gamma$ to infinity for the stepping stone model and assuming that $w_0^\gamma=U_0\in \Sigma$ does not depend on $\gamma$, we now observe that
	\begin{align*}
		\lim_{\gamma\to\infty}\E^{w^\gamma_0}\big[w^\gamma_t(k_1)\cdots w^\gamma_t(k_m)\big]=\E^{U_0}\big[U_t(k_1)\cdots U_t(k_m)\big],
	\end{align*}	
	since only the coalescence mechanism has changed: random walks now coalesce instantaneously after colliding.
	Boundedness of solutions implies that convergence of the moments suffices to deduce convergence of the finite dimensional distributions so that the \textbf{infinite rate limit of the stepping stone model is nothing but the standard voter 	process}. For more on this we refer to Section 10.3.1 of \cite{dawson}. 
	
\newpage
\section{Finite Rate Symbiotic Branching Processes}\label{sec:2}	
	
	The aim of this section is to give a compressed overview of definitions and results for symbiotic branching processes $\textrm{SBM}_\gamma$ with finite branching rate. After introducing some notation, precise definitions and a sketch of existence and 			uniqueness proofs we turn our focus to the longtime behavior. Let
	\begin{align}\label{exitd}
		Q_{u,v}^\varrho:=\big(W^1_\tau,W^2_\tau\big)
	\end{align}
	be the exit law	of a pair of $\varrho$-correlated Brownian motions started in $(u,v)$ for some $u\geq 0, v\geq 0,$ stopped at the exit-time 
	\begin{align}\label{exitt}
		\tau=\inf\big\{t: W^1_tW^2_t=0\big\}.
	\end{align}
	The laws $Q_{u,v}^\varrho$ are concentrated on the boundary of the first quadrant which we denote by
	\begin{align*}
		E=\big\{(y_1,0):y_1\geq 0\big\}\cup \big\{ (0,y_2):y_2\geq 0\big\}\subset \R^2.
	\end{align*}
	Whenever the initial condition $(u,v)$ is not crucial we abbreviate the exit-law as $Q^\varrho$. We present in the sequel those results on the longtime behavior of symbiotic branching which are related to $Q^\varrho$. Those will serve as preparation for the study of infinite rate symbiotic branching processes which we will denote by $\textrm{SBM}_\infty$.

\subsection{Existence, Uniqueness and Tools}\label{sec:uni}

	Recall Definition \ref{def1}, where we defined $\textrm{SBM}_\gamma$ as a system of coupled stochastic differential equations with drift operator $\Delta$. With some technical complications, $\Z^d$ can be replaced by a countable 	set $S$ and $\Delta$ by an operator
	\begin{align*}
		\mathcal A w(i)=\sum_{j\in S}a(i,j) w(j),
	\end{align*}
	where $\big(a(i,j)\big)_{i,j\in S}$ is the $Q$-matrix of a symmetric $S$-valued Markov process with uniformly bounded jump-rates. The particular case of $\Delta$ occurs for the choice $S=\Z^d$ and $a(i,j)=\frac{1}{2d}$ if $|i-j|=1$.

\subsubsection{State Spaces}
\label{sec:2.1.1}
	Let us define an infinite dimensional state-space for solutions which is commonly used in studying interacting particle systems. To do so, suppose $\beta:S\to \R^+$ is such that
	\begin{align*}
		\sum_{i\in S}\beta(i)<\infty\qquad\text{and}\qquad \sum_{i\in S}\beta(i)|a(i,k)|<M\beta(k)
	\end{align*}
	for all $k\in S$.  The state-space for the two-type model $\textrm{SBM}_\gamma$ then consists of pairs of sequences that grow slowly enough compared to $\beta$:
	\begin{align*}
		L^2_\beta=\big\{(u,v): S\to \R^+\times \R^+\,\text{ s.t. }\, \langle u,\beta\rangle<\infty\text{ and }\langle v,\beta\rangle<\infty\big\},
	\end{align*}
	where $\langle f,g\rangle=\sum_{k\in S}f(k)g(k)$. $L^2_\beta$ is equipped with the topology induced by the norm $||(u,v)||_\beta=\langle |u|+|v|,\beta\rangle$.
	 Existence of such a sequence $\beta$ is ensured by Lemma IX.1.6 of \cite{L}. 
		In the following we fix a test-sequence $\beta$ and only work on the
corresponding fixed state-space $L^2_\beta$.

\subsubsection{Precise Definition and Existence of Solutions}
	Having defined proper state-spaces, we can give the precise definition of solutions to $\textrm{SBM}_\gamma$.

		\begin{definition}\label{defsol} For $(u_0,v_0)\in L_\beta^2$, we say that $(u_t,v_t)_{t\geq 0}$, more precisely $(u,v,B^1,B^2)$, is a (weak) solution of $\mathrm{SBM}_\gamma$ on the filtered probability space $(\Omega, \mathcal{F}, (\mathcal{F}_t)_{t\geq 0},\P)$ if
		\begin{itemize}
			\item[i)] $\big\{B^1_\cdot(i),B^2_\cdot(i)\big\}_{i \in S}$ is a set of $(\mathcal{F}_t)$-adapted Brownian motions satisfying for $t>0$
			\begin{align*}
				\E\big[B^1_{t}(k)B^1_{t}(j)\big]&=\delta_0(k-j)t,\\
				\E\big[B^2_{t}(k)B^2_{t}(j)\big]&=\delta_0(k-j)t,\\
				\E\big[B^1_{t}(k)B^2_{t}(j)\big]&=\varrho\delta_0(k-j)t,
			\end{align*}
			\item[ii)] $u_\cdot,v_\cdot$ are $(\mathcal{F}_t)$-adapted stochastic processes, almost surely satisfying the integral equations
				\begin{align*}
					u_t(k)&=u_0(k)+\int_0^t\mathcal A u_s(k)\,ds+\int_0^t\sqrt{\gamma u_s(k)v_s(k)}dB_s^1(k),\\
					v_t(k)&=v_0(k)+\int_0^t\mathcal A v_s(k)\,ds+\int_0^t\sqrt{\gamma u_s(k)v_s(k)}dB_s^2(k),
				\end{align*}
				for $k\in S$,
			\item[iii)] $(u_\cdot,v_\cdot)$ is almost surely continuous with $(u_t,v_t)\in L_\beta^2$ for all $t\geq 0$.
		\end{itemize}
	\end{definition}

		
We now give a quick glance on how to construct solutions for $\textrm{SBM}_\gamma$. For a very detailed proof for $\varrho=0$ we refer the reader to \cite{DP98}. \textbf{From now on, until the end of the section, we assume $S=\Z^d$ and $\mathcal A=\Delta$.} The relations to the exit-law of $\varrho$-correlated Brownian 	motions remain unchanged in this simplified setting so that it serves equally well as a preparation for $\textrm{SBM}_\infty$.

	\begin{thm}\label{ex:dis}
		If $(u_0,v_0)\in L_\beta^2$, there is a weak solution of $\textrm{SBM}_\gamma$.
	\end{thm}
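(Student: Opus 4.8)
The plan is to construct a weak solution of $\textrm{SBM}_\gamma$ by a finite-dimensional approximation combined with a tightness/weak-convergence argument, following the Dawson/Perkins scheme for $\varrho=0$ and adapting it to the correlated noise. First I would set up, for each finite box $B_N=[-N,N]^d\cap\Z^d$, the finite system of SDEs
\begin{align*}
	du_t^N(k)&=\Delta_N u_t^N(k)\,dt+\sqrt{\gamma\,(u_t^N(k))^+(v_t^N(k))^+}\,dB_t^1(k),\\
	dv_t^N(k)&=\Delta_N v_t^N(k)\,dt+\sqrt{\gamma\,(u_t^N(k))^+(v_t^N(k))^+}\,dB_t^2(k),
\end{align*}
for $k\in B_N$ (with, say, zero boundary conditions so the state space is finite-dimensional), where $\Delta_N$ is the restricted Laplacian and the driving Brownian motions have the prescribed correlation structure (\ref{ss2}). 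The coefficients are continuous and of linear growth, so by standard theory (Skorokhod existence for SDEs with continuous coefficients) a weak solution $(u^N,v^N)$ exists; a comparison/positivity argument using that the diffusion coefficient vanishes when either coordinate hits $0$ shows $u^N,v^N\geq0$, so the positive parts can be dropped.

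Next I would establish moment bounds uniform in $N$ that are compatible with the weighted space $L^2_\beta$. The key point is that although $\textrm{SBM}_\gamma$ has no second moment in general on the full lattice, on a finite box with the linear-growth coefficient $\sqrt{\gamma u v}\leq \tfrac{\gamma}{2}(u+v)$ one does get finite second moments, and more importantly one gets a bound on $\E[\langle u_t^N+v_t^N,\beta\rangle]$ that is uniform in $N$ and locally bounded in $t$: apply the generator to the linear functional $\langle\cdot,\beta\rangle$, use that $\langle \Delta u,\beta\rangle\leq M\langle u,\beta\rangle$ by the defining property of $\beta$ (the martingale part is a true martingale after localization because of the finite-box second moments), and close a Gronwall estimate. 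Together with an analogous estimate for the quadratic variation of the martingale terms this yields tightness of $(u^N,v^N)$ in $C([0,\infty),L^2_\beta)$ — here one checks the Aldous/Kurtz criterion coordinatewise and then upgrades to the product-weighted topology using the summability of $\beta$ to control the tails uniformly.

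Having tightness, I would pass to a weakly convergent subsequence, realize it on a common probability space via Skorokhod's representation theorem, and identify the limit $(u,v,B^1,B^2)$ as a weak solution in the sense of Definition \ref{defsol}: the drift terms pass to the limit by continuity of $\Delta$ on $L^2_\beta$, and the stochastic integrals converge because $\sqrt{\gamma u^N v^N}\to\sqrt{\gamma u v}$ locally uniformly and the quadratic variations converge, so the limiting martingale problem (with the correct joint bracket structure $\langle M^1(k),M^2(k)\rangle_t=\varrho\int_0^t\gamma u_s(k)v_s(k)\,ds$ coming from the $\varrho$-correlation) holds; a martingale representation then produces the $\varrho$-correlated Brownian motions $B^1,B^2$. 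Continuity of paths and the $L^2_\beta$-valued property of the limit follow from the tightness estimates. The main obstacle I expect is the uniform-in-$N$ control in the weighted norm: one must make sure the a priori estimates only use the linear growth of the coefficient and the property $\sum_i\beta(i)|a(i,k)|<M\beta(k)$, and that the localization needed to turn the stochastic integrals into genuine martingales (so Gronwall applies) is compatible with taking $N\to\infty$; this is exactly the place where the lack of global second moments must be circumvented by working with the first moment of $\langle u+v,\beta\rangle$ rather than with second moments, and where one uses that on each finite box the process does not explode.
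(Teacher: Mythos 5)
Your overall scheme — truncate to a finite box, solve the resulting finite system, establish moment bounds uniform in the truncation parameter, prove tightness, pass to a limit and identify it — is exactly the Shiga--Shimizu/Dawson--Perkins strategy the paper follows. The first moment argument you describe (apply the generator to the linear functional $\langle\cdot,\beta\rangle$, use $\langle|\mathcal A u|,\beta\rangle\leq M\langle u,\beta\rangle$, localize, Gronwall) is fine and matches the paper, since $\varrho$ does not enter first moments.

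The gap is at the quadratic-variation estimate, which you dismiss as ``an analogous estimate.'' It is not analogous, and it is precisely where the correlation parameter bites. To control $\E\bigl[\sup_{t\leq T}\langle u^N_t,\beta\rangle\bigr]$ the Burkholder--Davis--Gundy step requires a bound, uniform in $N$, on
\begin{align*}
\E\Bigl[\int_0^T \gamma\, u^N_s(i)\,v^N_s(i)\,ds\Bigr],
\end{align*}
i.e.\ on the \emph{mixed} second moment $\E[u^N_s(i)v^N_s(i)]$. Itô's formula gives
\begin{align*}
\frac{d}{dt}\,\E\bigl[u^N_t(i)v^N_t(i)\bigr]
= \E\bigl[u^N_t(i)\,\Delta_N v^N_t(i)\bigr] + \E\bigl[v^N_t(i)\,\Delta_N u^N_t(i)\bigr] + \varrho\gamma\,\E\bigl[u^N_t(i)v^N_t(i)\bigr],
\end{align*}
and the last term has the sign of $\varrho$. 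For $\varrho>0$ this extra \emph{positive} summand spoils the closure of the Dawson--Perkins Gronwall argument (the paper says this explicitly), so you cannot simply claim the mixed second moment is controlled ``because the coefficient has linear growth.'' The paper instead obtains the uniform-in-$N$ bound $\E[u^N_s(i)v^N_s(i)]\leq Ce^{\gamma T}$ from a moment-duality representation for the finite system (the finite-box analogue of Lemma~\ref{la:mdual}), where the mixed moment is expressed via colored coalescing walks and an exponential of collision local times that are bounded by $t$. Your proposal needs this (or an equivalent device, such as the domination of the full system of $\E[u^N(i)v^N(j)]$ by a two-type Anderson model as in the remark the paper cites from \cite{CDG04}); as written the argument does not go through for $\varrho>0$. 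Once that bound is in place, the rest of your sketch — BDG, Chebyshev for (\ref{e1}), an analogous estimate for (\ref{e2}), and passage to the limit — is essentially what the paper does.
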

	
	\begin{proof}[Sketch of Proof]
	The proof goes along famous arguments due to \cite{SS80} based on finite dimensional SDE theory and limit considerations. Cutting the infinite index set, solutions  to the finite system can be constructed and then, by moment estimates, their convergence to a weak solution of $\textrm{SBM}_\gamma$ can be shown.\\
		For positive integers $n$, let $S_n=\Z^d\cap [-n,n]^d$ be a finite subset of $\Z^d$. To define the approximating system, we consider the following system of finite-dimensional stochastic differential equations which we denote 		by $\mathrm{SBM}^n_\gamma$:
		\begin{align*}
			u_t^n(k)&=u_0(k)+\int_0^t\sum_{\underset{|j-k|=1}{ j\in S_n}}\frac{1}{2d}(u^n(j)-u^n(k)) \,ds+\int_0^t\sqrt{\gamma u_s^n(k)v_s^n(k)}\,dB^{1,n}_s(k),\\
			v_t^n(k)&=v_0(k)+\int_0^t \sum_{\underset{|j-k|=1}{ j\in S_n}}\frac{1}{2d}(v^n(j)-v^n(k))\,ds+\int_0^t\sqrt{\gamma u_s^n(k)v_s^n(k)}\,dB^{2,n}_s(k).
		\end{align*}
		The correlation structure of the Brownian motions remains as in Definition \ref{defsol}. Since this is a system of finite-dimensional stochastic differential equations existence of weak solutions 
		$$\big\{u^n_t(k),v^n_t(k),B^{1,n}(k),B^{2,n}(k)\big\}_{k\in S_n}$$ follows from finite-dimensional diffusion theory for sufficiently ``good" coefficients (see for instance Theorem 5.3.10 of \cite{EK86}). To prove non-negativity of solutions, one shows that the semimartingale's local time at zero equals to zero (see for instance page 1127 of \cite{DP98}).

		Solutions $(u^n,v^n)$ can be extended to the entire lattice by setting $u^n_t(k)=u_0(k),v^n_t(k)=v_0(k)$ for $k\neq S_n$. Due to the choice of the initial conditions, the $(u^n_t, v^n_t)$ are contained in $L_\beta^2$ for all $t\geq 0$.

		The main ingredients, to prove convergence of $(u^n,v^n)$, are the following estimates. It suffices to show that for $k\in S, T>0$, and $\epsilon>0$
		\begin{align}
			\sup_{n\in\N}\P\big[\sup_{t\leq T}u^n_t(k)>K\big]&\rightarrow 0, \quad \text{ as }K\rightarrow \infty,\label{e1}\\
			\sup_{n\in\N}\sup_{|t-s|\leq h, 0\leq t,s\leq T}\P\big[|u^n_t(k)-u_s^n(k)|>\epsilon\big]&\rightarrow 0, \quad\text{ as }h\rightarrow 0,\label{e2}
		\end{align}
		and analogously for $v^n$. The desired convergence in (\ref{e1}), (\ref{e2}) is analogous to (2.9) and (2.10) of \cite{SS80}. In order to ensure that all stochastic integrals are martingales we introduce a sequence of stopping times: $T^n_N=\inf\big\{t\geq 0: \langle u_t^n,\beta\rangle+\langle v_t^n,\beta\rangle>N\big\}$. This sequence, almost surely, converges to infinity, as $N$ tends to infinity, since solutions do not explode. Using only the definition of $\textrm{SBM}^n_\gamma$ we estimate
		\begin{align}
\nonumber
			\quad\E\big[\sup_{t\leq T\wedge T_N^n}\langle u_t^n,\beta\rangle\big]
			&=\E\Big[\sup_{t\leq T\wedge T_N^n}\sum_{i\in \Z^d}u_t^n(i)\beta(i)\Big]\\
\nonumber
			&\leq\langle u_0,\beta\rangle+\E\bigg[\sup_{t\leq T\wedge T_N^n}\sum_{i\in S_n}\beta(i)\int_0^t \sum_{\underset{|j-i|=1}{j\in S_n}}\frac{1}{2d}(u^n_s(j)-u^n_s(i)) \,ds\bigg]\\
\nonumber
			&\quad+\E\bigg[\sup_{t\leq T\wedge T_N^n}\sum_{i\in S_n}\beta(i)\int_0^t\sqrt{\gamma u_s^n(i)v_s^n(i)}\,dB^{1,n}_s(i)\bigg]\\\begin{split}
			&\leq\langle u_0,\beta\rangle+\E\bigg[\sum_{i\in S_n}\beta(i)\int_0^{T\wedge T_N^n} \sum_{\underset{|i-j|=1}{j\in S_n}}\frac{1}{2d}u_s^n(j)\,ds\bigg]
\\
\label{2807_1}
&\quad +\E\bigg[\sup_{t\leq T\wedge T_N^n}\sum_{i\in S_n}\beta(i)\int_0^t\sqrt{\gamma u_s^n(i)v_s^n(i)}\,dB^{1,n}_s(i)\bigg].\end{split}
		\end{align}
		Using the Burkholder-Davis-Gundy inequality and then Fubini's theorem we obtain the 
 following upper bound for the above expressions
		\begin{eqnarray*}
			\langle u_0,\beta\rangle+\sum_{i\in S_n}\beta(i)\int_0^{T} \sum_{\underset{|i-j|=1}{j\in S_n}}\frac{1}{2d}\E\big[u_s^n(j)\big]\,ds+\gamma\sum_{i\in S_n}\beta(i)\int_0^{T}\E\big[ u_s^n(i)v_s^n(i)\big]\,ds.
		\end{eqnarray*}
		So far, this procedure is fairly standard for interacting diffusions of type (\ref{int}) where instead of the mixed moments, the expectations $\E[f(w_t(i))]$ need to be bounded. There, linear growth conditions on $f$ lead to a Gronwall inequality which yields the desired bound. In our case, we need to estimate moments $\E[u_s^n(j)]$ and $\E[u_s^n(i)v_s^n(i)]$ uniformly in $n$. The first moment can be estimated as on page 1129 of \cite{DP98} since the correlations do not influence the first moments. The mixed second moment is more delicate. Using a point wise representation of solutions, for $\varrho<0$, the same estimates as in \cite{DP98} can be performed. The additional difficulty comes for positively correlated Brownian motions ($\varrho>0$) that spoil the Gronwall argument in~\cite{DP98} due to the appearance of an additional positive summand. Nonetheless, the mixed second moment for the approximating system can be estimated directly: a moment expression for the finite-
 dimensional equation, in the same spirit of the moment duality that we explain below (see Lemma \ref{la:mdual}),  gives the uniform in $n$ upper bound 
		\begin{align}\label{abc}
			\E\big[u_s^n(i)v_s^n(i)\big]\leq C e^{\gamma T}.
		\end{align}
		This is similar to the remark on page 41 of \cite{CDG04} where the existence of solutions for $\varrho=0$ was justified by the observation that $uv\leq u^2+v^2$ which leads to an upper bound by a two-type Anderson model verifying (\ref{abc}).\\
		By monotone convergence, getting rid of the stopping times on the lefthand side of~(\ref{2807_1}), this implies
		\begin{eqnarray*}
			\E\big[\sup_{t\leq T}\langle u_t^n,\beta\rangle\big]=\lim_{N\rightarrow \infty}\E\big[\sup_{t\leq T\wedge T_N^n}\langle u_t^n,\beta\rangle\big]
			\leq \langle u_0,\beta \rangle+C_T,
		\end{eqnarray*}
		where $C_T$ is independent of $n$. Hence, in particular we get by Chebychev's inequality that
		\begin{eqnarray*}
		 	\sup_{n\in\N}\P[\sup_{t\leq T}u^n_t(k)>K]\rightarrow 0,
		\end{eqnarray*}
		as $K$ tends to infinity. To prove (\ref{e2}) one needs to check that $\E\big[|\langle u_t^n-u_s^n,\beta\rangle|\big]$ is bounded uniformly in $n$. This can be done similarly as before, using the same bounds on the 				moments.\\
		Following the arguments on page 399 of \cite{SS80}, the bounds (\ref{e1}), (\ref{e2}) suffice to ensure convergence (in a sufficiently strong sense) of the sequences $(u^n,v^n)$ to a limiting process $(u,v)$ solving the equation defining $\textrm{SBM}_\gamma$.
\end{proof}

\subsubsection{Tools: Mild Solutions, Total-Mass Processes and Dualities}
	From the very definition, interacting diffusion processes are parabolic equations with random potential functions. In the spirit of the deterministic theory one can equally ask for representations that are easier to work with in some 		situations. We will use the weak-solution representation and  the variation of constant form. In the following we use the semigroup generated by $\Delta$ on $\Z^d$, i.e. the family of linear operators
	\begin{align}\label{SG}
		P_t f(k)=\sum_{i\in \Z^d}p_t(i,k)f(i),\quad t\geq 0,
	\end{align}
	where $p_t(i,k)$ is the transition kernel of a simple random walk on $\Z^d$.
	\begin{convention}
		The constant function on $\Z^d$ taking value $u\in \R^+$ is abbreviated by $\bu$.
	\end{convention}
	For a continuum analogue of the following two representations we refer to Corollary 19 of \cite{EF04} and for a very detailed proof on the lattice for $\varrho=0$ to Theorem 2.2 of \cite{DP98}.
	\begin{prop}\label{prop:mild}
		Suppose that $(u_t,v_t)$ is a solution of $\textrm{SBM}_\gamma$ with $u_0,v_0$ summable, then $u_t$ and $v_t$ are summable and the total-mass processes satisfy
		\begin{align}\label{12}
		\begin{split}
			\langle u_t,\1\rangle&=\langle u_0,\1\rangle+\sum_{j\in \Z^d}\int_0^t \sqrt{\gamma u_s(j)v_s(j)}\,dB^1_s(j),\\
			\langle v_t,\1\rangle&=\langle v_0,\1\rangle+\sum_{j\in \Z^d}\int_0^t \sqrt{\gamma u_s(j)v_s(j)}\,dB^2_s(j),
		\end{split}
		\end{align}
		where the infinite sums converge in $L^2(\P)$. If $(u_0,v_0)\in L^2_\beta$, then the point wise representation
		\begin{align}
		\begin{split}\label{14}
			u_t(k)&=P_tu_0(k)+\sum_{j\in\Z^d}\int_0^tp_{t-s}(j,k)\sqrt{\gamma u_s(j)v_s(j)}\,dB^1_s(j),\\
			v_t(k)&=P_tv_0(k)+\sum_{j\in\Z^d}\int_0^tp_{t-s}(j,k)\sqrt{\gamma u_s(j)v_s(j)}\,dB^2_s(j),
		\end{split}
		\end{align}
		holds. The covariation structure of the Brownian motions is as in  the definition of $\textrm{SBM}_\gamma$.
	\end{prop}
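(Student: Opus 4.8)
The plan is to prove Proposition~\ref{prop:mild} by exploiting the structure of $\textrm{SBM}_\gamma$ already established, rather than constructing solutions anew. First I would establish that $u_t$ and $v_t$ remain summable. Summability of $u_0,v_0$ combined with the $L^2_\beta$-bounds from the proof of Theorem~\ref{ex:dis} gives, via the mixed-moment estimate $\E[u_s(j)v_s(j)]\leq Ce^{\gamma T}$ and the first-moment bound $\E[u_s(j)]\leq Ce^{cT}u_0$-type estimates, that $\sum_j\E[u_t(j)]<\infty$, hence $\langle u_t,\1\rangle<\infty$ almost surely; the same argument applies to $v_t$. This is where the finite-variance-type moment control from the existence proof is reused.

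Next I would derive the total-mass equation~(\ref{12}). Testing the defining integral equations of Definition~\ref{defsol} against the constant test-function $\1$ (or equivalently against indicators $\1_{S_n}$ and letting $n\to\infty$) makes the drift term $\int_0^t\mathcal A u_s(k)\,ds$ disappear upon summation, since $\Delta$ is conservative: $\sum_k\Delta f(k)=0$ for summable $f$. What remains is $\langle u_0,\1\rangle$ plus the sum of the martingale terms $\sum_j\int_0^t\sqrt{\gamma u_s(j)v_s(j)}\,dB^1_s(j)$. To justify interchanging the infinite sum with the stochastic integral and to show the sum converges in $L^2(\P)$, I would check the second-moment bound $\sum_j\E\big[\int_0^t\gamma u_s(j)v_s(j)\,ds\big]=\gamma\int_0^t\sum_j\E[u_s(j)v_s(j)]\,ds<\infty$, again using the mixed-moment estimate and summability; orthogonality of the increments $dB^1_s(j)$ for distinct $j$ (their quadratic covariation vanishes off the diagonal) then gives convergence in $L^2$ and lets one pass the partial sums to the limit.

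For the mild (variation-of-constants) representation~(\ref{14}), the standard route is to show the weak formulation and the mild formulation are equivalent for $L^2_\beta$-valued solutions. Applying the semigroup: for fixed $k$ and $t$, consider the process $s\mapsto P_{t-s}u_s(k)$, apply Itô's formula (or rather, the stochastic Fubini / integration-by-parts identity for the time-dependent test function $p_{t-s}(\cdot,k)$), and use that $\frac{d}{ds}P_{t-s}f = -P_{t-s}\Delta f$ to cancel the drift against the $\mathcal A u_s$ term. This yields $u_t(k)=P_t u_0(k)+\sum_j\int_0^t p_{t-s}(j,k)\sqrt{\gamma u_s(j)v_s(j)}\,dB^1_s(j)$. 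The convergence of this infinite sum in $L^2(\P)$ follows from $\sum_j\int_0^t p_{t-s}(j,k)^2\gamma\,\E[u_s(j)v_s(j)]\,ds\leq \|p\|_\infty\,\gamma\,e^{\gamma T}\int_0^t\sum_j p_{t-s}(j,k)\,ds<\infty$, since $p_{t-s}(\cdot,k)$ is a probability kernel and $p_{t-s}(j,k)\leq 1$.

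The main obstacle I anticipate is the rigorous justification of the stochastic Fubini theorem and the interchange of the infinite summation with the time-integration and stochastic integration in the presence of only $2-\epsilon$ moments for individual coordinates (the $L^2$ control of $u_s(j)v_s(j)$ is exactly at the boundary). One must be careful that all the moment bounds invoked are finite \emph{before} $\gamma\to\infty$; for fixed finite $\gamma$ they are, by~(\ref{abc}), so the argument closes. Handling the time-dependence of the integrand $p_{t-s}(j,k)$ in the stochastic integral — which requires either a stochastic Fubini theorem or an approximation of $p_{t-s}$ by step functions in $s$ — is the one genuinely technical point; I would cite Theorem~2.2 of \cite{DP98} for the $\varrho=0$ case and note that the correlation structure~(\ref{ss2}) does not affect any of these estimates, since they only use the diagonal quadratic variation $d\langle B^i(j)\rangle_s=ds$ and the bound on $\E[u_sv_s]$, not the cross-correlation.
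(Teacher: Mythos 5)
The paper gives no proof of this proposition; it explicitly states ``Instead of sketching a proof we give an important application,'' and points the reader to Corollary~19 of \cite{EF04} for the continuum analogue and Theorem~2.2 of \cite{DP98} for a detailed lattice proof at $\varrho=0$. Your proposal therefore supplies what the paper omits, and it follows exactly the standard route one would take (and the one \cite{DP98} does take): sum the coordinate SDEs against $\1$ (via exhausting indicators) and use conservativity of $\Delta$ to kill the drift for (12), and apply a stochastic Fubini / It\^o argument with the time-reversed kernel $p_{t-s}(\cdot,k)$ to cancel the drift against $-P_{t-s}\Delta$ for (14). Your closing remark that the correlation $\varrho$ plays no role — because only diagonal quadratic variations $d\langle B^i(j)\rangle_s = ds$ and the mixed second moment enter the $L^2$ estimates — is exactly the right observation and is why the lattice proof of \cite{DP98} carries over verbatim.

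One small gap in the estimate for (14): you pull the bound $\E[u_s(j)v_s(j)]\leq C e^{\gamma T}$ out as if it were uniform in $j$, writing $\sum_j\int_0^t p_{t-s}(j,k)^2\gamma\,\E[u_s(j)v_s(j)]\,ds\leq \|p\|_\infty\gamma e^{\gamma T}\int_0^t\sum_j p_{t-s}(j,k)\,ds$. The uniformity in $j$ is fine in the first part of the proposition (summable nonnegative data on $\Z^d$ is bounded, and the first-moment identity $\E[u_t(k)]=P_t u_0(k)$ settles summability cleanly), but the mild form (14) is stated for general $(u_0,v_0)\in L^2_\beta$, which contains unbounded configurations; there the mixed moment $\E[u_s(j)v_s(j)]$ grows as $|j|\to\infty$ and the displayed one-line bound fails. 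The fix is to keep the $\beta$-weighted control from the existence proof — $\E[u_s(j)v_s(j)]$ is dominated by $C_T/\beta(j)$-type quantities — and use the (at worst exponential) spatial decay of $p_{t-s}(j,k)$ to absorb the growth; this is the argument of \cite{DP98}, Theorem~2.2(b). This is a repair of one display, not of the strategy, and the rest of the proposal is correct.
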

	The weak solution representation can be obtained for $\langle u_t,\phi\rangle$ also for more general test-functions $\phi$.
 Instead of sketching a proof we give an important application leading the way from $\textrm{SBM}_\gamma$ to the exit-law $Q^\varrho$ defined in (\ref{exitd}).
	\smallskip

	A property that is shared by many particle systems is that started at summable initial conditions the total-mass process is a martingale. A natural question for the two-type model $\textrm{SBM}_\gamma$ is how the two total-mass martingales relate to each other if both types are started at summable initial conditions. In order to avoid confusion with $\langle \cdot,\cdot\rangle$ we denote in the following the cross-variations of square-integrable martingales by $[\cdot,\cdot]$.

	\begin{lem}\label{lem:3}
		Suppose $u_0$ and $v_0$ are summable, then $\langle u_t,\1\rangle$ and $\langle v_t,\1\rangle$ are non-negative square-integrable martingales with quadratic-variations
		\begin{align*}
			\left[\langle u_\cdot,\1\rangle,\langle u_\cdot,\1\rangle\right]_t=\left[\langle v_\cdot,\1\rangle,\langle v_\cdot,\1\rangle\right]_t=\gamma \int_0^t \langle u_s,v_s	\rangle\,ds
		\end{align*}
		and cross-variation
		\begin{align*}
			\left[\langle u_\cdot,\1\rangle,\langle v_\cdot,\1\rangle\right]_t=\varrho\gamma \int_0^t \langle u_s,v_s\rangle\,ds.
		\end{align*}	
	\end{lem}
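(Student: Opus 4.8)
The plan is to read off the martingale property and the (cross-)variations directly from the total-mass representation (\ref{12}) in Proposition \ref{prop:mild}, which expresses $\langle u_t,\1\rangle$ and $\langle v_t,\1\rangle$ as sums of stochastic integrals against the driving Brownian motions. First I would fix summable $u_0,v_0$ and invoke Proposition \ref{prop:mild} to ensure $u_t,v_t$ remain summable and that the infinite sums in (\ref{12}) converge in $L^2(\P)$; non-negativity is inherited from non-negativity of the solution coordinates (established in the construction, e.g.\ via the vanishing local time at zero). The martingale property then follows once we check that each stochastic integral $\int_0^t\sqrt{\gamma u_s(j)v_s(j)}\,dB^i_s(j)$ is a genuine (square-integrable) martingale and that the $L^2$-convergent sum of martingales is again a martingale; this needs the integrability bound $\E\big[\int_0^t\langle u_s,v_s\rangle\,ds\big]<\infty$, which comes from the mixed-second-moment estimate of the type (\ref{abc}) already used in the existence proof, together with a monotone/dominated convergence argument to pass the bound inside the infinite sum.

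Next I would compute the variations. Since the $\{B^1(j),B^2(j)\}_{j}$ are independent across sites with the correlation structure (\ref{ss2}), the cross-variation of two of the one-site integrals is
\begin{align*}
	\Big[\int_0^\cdot\sqrt{\gamma u_s(i)v_s(i)}\,dB^1_s(i),\,\int_0^\cdot\sqrt{\gamma u_s(j)v_s(j)}\,dB^2_s(j)\Big]_t=\varrho\,\delta_0(i-j)\gamma\int_0^t u_s(i)v_s(i)\,ds,
\end{align*}
and summing over $i,j\in\Z^d$ (only diagonal terms survive) gives $[\langle u_\cdot,\1\rangle,\langle v_\cdot,\1\rangle]_t=\varrho\gamma\int_0^t\langle u_s,v_s\rangle\,ds$. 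The same computation with $B^1,B^1$ (resp.\ $B^2,B^2$), where the correlation coefficient is $1$ instead of $\varrho$, yields the stated quadratic variations. One has to justify interchanging the infinite sum with the bracket operation; this is legitimate because the partial sums converge in $L^2(\P)$ uniformly on compact time intervals (by a Doob/$L^2$ maximal inequality applied to the martingale tails), so the brackets of the partial sums converge to the bracket of the limit.

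The main obstacle is precisely this passage to the limit in the infinite sum: one must control the tail $\sum_{|j|>n}\int_0^t\sqrt{\gamma u_s(j)v_s(j)}\,dB^i_s(j)$ in $L^2$, uniformly on $[0,T]$, which reduces to showing $\sum_{j\in\Z^d}\E\big[\int_0^T u_s(j)v_s(j)\,ds\big]<\infty$. This is where one needs the $L^2_\beta$-type moment bounds and the second-moment estimate (\ref{abc})-style control from the construction (or a direct Gronwall/duality argument bounding $\sum_j\E[u_s(j)v_s(j)]$ on $[0,T]$); once this summability is in hand, Doob's inequality gives $L^2$-uniform convergence of the partial-sum martingales, the limit is a square-integrable martingale, and the bracket identities follow by continuity of $(\cdot,\cdot)\mapsto[\cdot,\cdot]$ in the $L^2$ sense. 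Everything else is a routine application of Itô calculus and the prescribed covariance structure (\ref{ss2}).
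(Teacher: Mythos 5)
Your proposal is correct and follows essentially the same route as the paper: both use the total-mass stochastic-integral representation from Proposition \ref{prop:mild}, invoke the fact that $L^2(\P)$-convergence of the partial-sum martingales preserves both the martingale property and the bracket processes, and then read off the quadratic and cross-variations termwise from the covariance structure (\ref{ss2}). The extra detail you supply about controlling $\sum_j\E\big[\int_0^T u_s(j)v_s(j)\,ds\big]$ via the second-moment bound is exactly the ingredient the paper's sketch implicitly relies on.
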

	\begin{proof}[Sketch of Proof]
		Positivity of the total-mass processes comes directly from positivity of symbiotic branching processes and the martingale property follows from the representation in Proposition \ref{prop:mild} as the martingale property 		is invariant under $L^2(\P)$-convergence. Hence, it suffices to calculate the bracket-processes. The representation stems from the fact that for $L^2(\P)$-convergent martingales also the bracket processes converge so that
		\begin{align*}
			[\langle u_\cdot,1\rangle,\langle v_\cdot,1\rangle]_t&=\lim_{|M|\to\infty}\Big[\sum_{k\in M}\int_0^\cdot \sqrt{\gamma u_s(k)v_s(k)}\,dB^1_s(k),\sum_{k\in M}\int_0^\cdot \sqrt{\gamma u_s(k)v_s(k)}\,dB^2_s(k)\Big]_t\\
			&=\varrho\lim_{|M|\to\infty}\sum_{k\in M}\int_0^t \gamma u_s(k)v_s(k)\,ds\\
			&=\varrho\gamma \int_0^t \langle u_s,v_s\rangle\,ds.
		\end{align*}
		The derivation of the quadratic variations is similar but without the additional correlation parameter $\varrho$.
	\end{proof}

\subsubsection{Dualities}
	The results on interacting particle systems obtained during the last decades showed that the depth of possible results for particular systems depends 
  in many cases on available duality relations, i.e. relations of 			characteristics (here: Laplace transforms or  moments) of the process to those of other processes. \\
	In general, one says that a duality between two Markov processes $X$ and $Y$ with state-spaces $\mathcal X$ and $\mathcal Y$ holds if for 	some duality-function $H:\mathcal X\times \mathcal Y\to \R$ and a potential function $f:\mathcal Y\to \R$
	\begin{align}\label{dual}
		\E^{X_0}[H(X_t,Y_0))]=\E^{Y_0}\big[H(X_0,Y_t)e^{\int_0^t f(Y_s)\,ds}\big],\quad t\geq 0.
	\end{align}
	In fact, the simplified definition of duality involves potential function $f=0$ but we will need the generalized formulation for one of the dualities for $\textrm{SBM}_\gamma$.	Such a semigroup relation can also be expressed via generators. Suppose $A^X$ is the generator for $X$ and $A^Y$ the generator for $Y$, then, under some conditions (see Corollary 4.4.13 of \cite{EK86}), (\ref{dual}) is equivalent to the generator identity
	\begin{align}\label{gen}
		A^X H(\cdot ,y)=A^Y H(x,\cdot)+f(\cdot)H(x,\cdot),\quad \forall x\in \mathcal X, \forall y\in \mathcal Y.
	\end{align}
	The exponential correction is caused by the Feynman-Kac representation for the semigroup with additional potential $f$. The generator relation turns out to be useful in many cases to find a duality expression.\\
	 Let us now have a look at the two dual relations for $\textrm{SBM}_\gamma$. For $(x_1,x_2), (y_1,y_2)\in L_\beta^{2}$, define 
	\begin{align}\label{sd}\begin{split}
		&\quad\langle\langle x_1,x_2,y_1,y_2\rangle\rangle_\varrho\\
		&= \sum_{k\in \Z^d}\Bigg[-\sqrt{1-\varrho}\big( x_1(k)+x_2(k)\big)\big(y_1(k)+y_2(k)\big)+i\sqrt{1+\varrho}\big( x_1(k)-x_2(k)\big)\big(y_1(k)-y_2(k)\big)\Bigg].\end{split}
	\end{align}
Then, the self-duality relation reads as follows: suppose $(u_t,v_t)$ and $(\tilde u_t,\tilde v_t)$ are two solutions of $\textrm{SBM}_\gamma$ with initial conditions $(u_0,v_0)\in L_\beta^2$ and $\tilde u_0,\tilde v_0$ with compact support, then 
	\begin{align}\label{la:selfdual}
		\E^{u_0,v_0}[\exp(\langle\langle u_t,v_t,\tilde u_0,\tilde v_0 \rangle\rangle_\varrho)]=\E^{\tilde u_0,\tilde v_0}[\exp(\langle\langle u_0,v_0,\tilde u_t,\tilde v_t \rangle\rangle_\varrho)].
	\end{align}
	For $\varrho=0$ the self-duality goes back to \cite{M98} (see also Section 4 of \cite{CDG04}) and was generalized subsequently to $\varrho\neq 0$ in \cite{EF04}. In order to define the infinite rate analogue $\textrm{SBM}_\infty$, the self-duality will be 	discussed in more detail in Section 3. On first view the duality looks frightening but it has very important applications. Here is an application to weak uniqueness (Proposition 5 of \cite{EF04}) of $\textrm{SBM}_\gamma$.
	\begin{lem}\label{uniq}
		Weak uniqueness holds for solutions to $\textrm{SBM}_\gamma$ if $\varrho\in (-1,1)$.
	\end{lem}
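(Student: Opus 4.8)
The plan is to derive weak uniqueness from the self-duality relation~(\ref{la:selfdual}) by a standard argument: if two solutions have the same initial law, then the self-duality forces all of their exponential moments against a rich enough class of test-pairs to coincide, and this family of exponential functionals is separating on the state space $L^2_\beta$. Concretely, I would fix $(u_0,v_0)\in L^2_\beta$ and let $(u_t,v_t)$ and $(u_t',v_t')$ be two weak solutions of $\textrm{SBM}_\gamma$ with this common initial condition. For any $(\tilde u_0,\tilde v_0)$ of compact support, applying~(\ref{la:selfdual}) to each solution and noting that the right-hand side only involves a solution started from the compactly supported datum $(\tilde u_0,\tilde v_0)$ -- which is the same on both sides -- we get
\begin{align*}
	\E^{u_0,v_0}\big[\exp(\langle\langle u_t,v_t,\tilde u_0,\tilde v_0\rangle\rangle_\varrho)\big]=\E^{\tilde u_0,\tilde v_0}\big[\exp(\langle\langle u_0,v_0,\tilde u_t,\tilde v_t\rangle\rangle_\varrho)\big]=\E^{u_0,v_0}\big[\exp(\langle\langle u_t',v_t',\tilde u_0,\tilde v_0\rangle\rangle_\varrho)\big].
\end{align*}
Thus the one-dimensional marginals of $(u_t,v_t)$ and $(u_t',v_t')$ have the same ``Fourier--Laplace'' transform $(\tilde u_0,\tilde v_0)\mapsto \E[\exp(\langle\langle u_t,v_t,\tilde u_0,\tilde v_0\rangle\rangle_\varrho)]$ for every compactly supported test-pair.

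The next step is to argue that this determines the law of $(u_t,v_t)$ on $L^2_\beta$. Here one uses that $\varrho\in(-1,1)$, so both $\sqrt{1-\varrho}$ and $\sqrt{1+\varrho}$ are strictly positive and the pairing $\langle\langle\cdot,\cdot,\cdot,\cdot\rangle\rangle_\varrho$ has a genuine real (negative-definite) part $-\sqrt{1-\varrho}\,\langle u+v,\tilde u_0+\tilde v_0\rangle$ together with an oscillatory part $i\sqrt{1+\varrho}\,\langle u-v,\tilde u_0-\tilde v_0\rangle$. By choosing $\tilde u_0,\tilde v_0$ appropriately (e.g. putting mass on a single site, or on finitely many sites, and letting the weights vary) one recovers, for each finite set of coordinates, the joint Laplace/Fourier transform of $(u_t(k)+v_t(k))_{k}$ and $(u_t(k)-v_t(k))_{k}$, hence the finite-dimensional distributions of $(u_t,v_t)$; since $(u_t,v_t)$ lives in the Polish space $L^2_\beta$ and is coordinatewise continuous, the finite-dimensional distributions determine the law of the marginal. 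The endpoint case $\varrho=\pm1$ is genuinely excluded: for $\varrho=1$ one of the two quadratic forms degenerates, the transform only sees $u+v$ (resp. $u-v$), and the duality no longer separates -- this is exactly why the hypothesis $\varrho\in(-1,1)$ appears.

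Finally, weak uniqueness of the process (not merely of the time-$t$ marginals) follows by the usual Markov-property/monotone-class upgrade: the duality argument applies equally with the solution run from time $s$ to time $t$, so all finite-dimensional distributions $(t_1,\dots,t_n)\mapsto \mathrm{Law}((u_{t_1},v_{t_1}),\dots,(u_{t_n},v_{t_n}))$ agree, and a standard argument (cf. Proposition~5 of~\cite{EF04}, or Theorem 4.4.2 of~\cite{EK86} on duality and uniqueness of martingale problems) promotes the equality of one-dimensional marginals under a Markovian duality to equality of the laws on path space. The main obstacle is the second step: verifying that the negative-definiteness of $\langle\langle\cdot,\cdot,\tilde u_0,\tilde v_0\rangle\rangle_\varrho$ for $\varrho\in(-1,1)$ really does make the family $\{\exp(\langle\langle\cdot,\cdot,\tilde u_0,\tilde v_0\rangle\rangle_\varrho):(\tilde u_0,\tilde v_0)\text{ compactly supported}\}$ measure-determining on $L^2_\beta$ -- i.e. a Stone--Weierstrass / characteristic-function type separation argument on an infinite-dimensional state space, taking care that the complex exponentials above are bounded (so integrability is free) but that one must exhaust enough of the dual to separate points. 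Everything else is routine.
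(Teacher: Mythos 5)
Your proposal is correct and follows essentially the same route as the paper: apply the self-duality~(\ref{la:selfdual}) twice against a fixed solution started from a compactly supported test-pair, observe that the middle term depends only on that fixed solution, and then separate via the Laplace (real direction, $u+v$) and Fourier (imaginary direction, $u-v$) structure of $\langle\langle\cdot,\cdot,\cdot,\cdot\rangle\rangle_\varrho$; the upgrade from one-dimensional marginals to path-level uniqueness by Markov arguments is likewise the paper's closing step. One small slip: for $\varrho=1$ it is $\sqrt{1-\varrho}$ that vanishes, so the duality only sees $u-v$ (not $u+v$); symmetrically, for $\varrho=-1$ only $u+v$ survives -- but this does not affect the substance of your reasoning.
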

	\begin{proof}[Sketch of Proof]
  In fact, duality in many cases implies weak uniqueness for corresponding processes: for a general result see Proposition~4.4.7 in~\cite{EK86}. In this particular case, 
		the proof goes roughly as follows: suppose there are two solutions $(u^1_t,v^1_t)$ and $(u^2_t,v^2_t)$ with identical initial condition $(u_0,v_0)$. We aim at showing that both solutions coincide in law. For any fixed pair of 		compactly supported sequences $\phi,\psi$ there is a solution $(\tilde u_t,\tilde v_t)$ of $\textrm{SBM}_\gamma$ with initial condition $(\phi,\psi)$. Applying the self-duality twice shows that for all $t\geq 0$
		\begin{align*}
			\E^{u_0,v_0}[\exp(\langle\langle u^1_t,v^1_t,\phi,\psi\rangle\rangle_\varrho)]
			&=\E^{\phi,\psi}[\exp(\langle\langle u_0,v_0,\tilde u_t,\tilde v_t \rangle\rangle_\varrho)]\\
			&=\E^{u_0,v_0}[\exp(\langle\langle u^2_t,v^2_t,\phi,\psi \rangle\rangle_\varrho)].
		\end{align*}
		A closer look at the complex-valued duality function shows that the equality suffices to deduce the claim: in the real direction this is a Laplace transform and in the imaginary direction a Fourier transform. Hence, it comes as 		no surprise that the Laplace transform uniquely determines the law of $\langle u^i_t+v^i_t,\phi+\psi\rangle $ and the Fourier transform uniquely determines the law of $\langle u^i_t-v^i_t,\phi-\psi \rangle$. Now, as for finite-dimensional random 		variables, since $\phi$ and $\psi$ are arbitrary, this uniquely determines the laws of the configurations $u^i_t+v^i_t$ and $u^i_t-v^i_t$. Taking sums and differences, the one-dimensional marginals of $(u^i,v^i)$ are 			determined which finally can be extended to the path-level by Markov process arguments.
	\end{proof}
	The second duality is of different type. It does not involve an exponential duality function but a polynomial only. The aim is to give an expression for the moments
	\begin{align*}
	 	\E[u_t(k_1)\cdots u_t(k_{n})v_t(k_{n+1})\cdots v_t(k_{n+m})]
	\end{align*}
	in terms of a particle system which we now describe. Suppose that $n+m$ particles in $\Z^d$ are given. Each particle moves according to a continuous-time simple random walk independently of all other particles. At time $0$, $n$ particles of color $1$ are located at positions $k_1,...,k_n$ and $m$ particles of color $2$ are located at positions $k_{n+1},...,k_{n+m}$. For 	each pair of particles having the same color, one particle of the pair changes its color when the time the two particles have spent at same site, exceeds an independent exponential time with parameter $\gamma$. Let
	\begin{align*}
		L_t^=&=\text{total collision time of all pairs of same colors up to time }t,\\
		L_t^{\neq}&=\text{total collision time of all pairs of different colors up to time }t,\\
		l^1_t(a)&=\text{number of particles of color }1\text{ at site }a\text{ at time t},\\
		l^2_t(a)&=\text{number of particles of color }2\text{ at site }a\text{ at time t}
	\end{align*}
	and define the duality function
	\begin{align*}
		(x_1,x_2)^{(A_1,A_2)}&=\prod_{k\in \Z^d}x_1(k)^{A_1(k)}\prod_{k\in \Z^d}x_2(k)^{A_2(k)}
	\end{align*}
	for $(x_1,x_2)\in L_\beta^2$ and $A_1,A_2\in \{a:\Z^d\to\N \,|\, a(k)\neq 0\text{ only finitely many times} \}$. By definition the number of particles is finite and constant so that the duality function can be applied to $l^1, l^2$.
	The following lemma is taken from Section 3 of \cite{EF04}.
	\begin{lem}\label{la:mdual}
		Suppose $(u_0,v_0)\in L_\beta^2$ and $k_i\in \Z^d$, then
		\begin{align*}
			\E[u_t(k_1)\cdots u_t(k_{n})v_t(k_{n+1})\cdots v_t(k_{n+m})]=\E\big[(u_t,v_t)^{(l_0^1,l_0^2)}\big]&=\E\big[(u_0,v_0)^{(l^1_t,l^2_t)}e^{\gamma(L_t^=+\varrho L_t^{\neq})}\big],
		\end{align*}
		where the dual process behaves as explained above.
	\end{lem}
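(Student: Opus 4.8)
The plan is to establish the moment duality via the generator criterion \eqref{gen}, which reduces the identity to a finite-dimensional computation. First I would identify the two Markov processes: on one side, $X=(u,v)$ is $\mathrm{SBM}_\gamma$ with generator $A^X$ acting on cylinder functions; on the other side, $Y=(l^1,l^2,L^=,L^{\neq})$ is the coloured random walk system together with the two additive collision-time functionals, with generator $A^Y$. The duality function is $H\big((x_1,x_2),(A_1,A_2)\big)=(x_1,x_2)^{(A_1,A_2)}=\prod_k x_1(k)^{A_1(k)}x_2(k)^{A_2(k)}$, and the potential is $f(A_1,A_2)=\gamma(L^=+\varrho L^{\neq})$ read off from the rates at which colliding same-colour pairs sit together versus different-colour pairs. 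Because only finitely many sites carry particles, $H(\cdot,(A_1,A_2))$ is a genuine polynomial cylinder function of $(u,v)$, so all the operations below are legitimate.

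The core step is to verify the generator identity
\begin{align*}
A^X H\big(\cdot,(A_1,A_2)\big)(x_1,x_2)=A^Y H\big((x_1,x_2),\cdot\big)(A_1,A_2)+f(A_1,A_2)\,H\big((x_1,x_2),(A_1,A_2)\big).
\end{align*}
On the left, $A^X$ splits into the migration part (coming from $\Delta$) and the branching part (coming from the diffusion coefficient $\sqrt{\gamma u v}$ with the correlation structure \eqref{ss2}). Writing $H=\prod_k u(k)^{A_1(k)}v(k)^{A_2(k)}$, the migration term, after an application of It\^o and rearranging, produces exactly the random-walk generator acting on the particle labels $A_1,A_2$; this is the standard computation for moment duals of linearly-migrating systems and uses symmetry of the $Q$-matrix to move $\Delta$ from the test function onto the particle positions. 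The branching term requires computing $\tfrac12\sum_k \big(\partial_{u(k)}^2 H\, \gamma u(k)v(k)+\partial_{v(k)}^2 H\,\gamma u(k)v(k)+2\varrho\,\partial_{u(k)}\partial_{v(k)} H\,\gamma u(k)v(k)\big)$. Each second derivative lowers an exponent by two and reintroduces the missing power of $u(k)v(k)$, so that $\partial_{u(k)}^2 H\cdot u(k)v(k)$ equals $A_1(k)(A_1(k)-1)$ times $H$ with $A_1(k)$ replaced by $A_1(k)-1$ and $A_2(k)$ by $A_2(k)+1$ — precisely a same-colour coalescence event $1\to$ shifting a label; similarly for the $v$-term, and the mixed $\varrho$-term corresponds to a different-colour recolouring. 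Matching combinatorial prefactors $A_i(k)(A_i(k)-1)$ with the rate at which an ordered pair of like-coloured particles at site $k$ has been co-located, one sees that the diagonal terms reproduce the colour-change jumps of $Y$ plus the residual ``diagonal'' contribution which is exactly $\gamma L^=\,H$, and the $\varrho$-term contributes $\varrho\gamma L^{\neq}\,H$; these residual pieces are what become the Feynman–Kac potential $f$.

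Having checked \eqref{gen} on the relevant class of polynomial cylinder functions, I would invoke the standard duality theorem (Corollary 4.4.13 / Proposition 4.4.7 of \cite{EK86}, as already cited in the excerpt) to upgrade the infinitesimal identity to the semigroup identity \eqref{dual}, namely
\begin{align*}
\E^{(u_0,v_0)}\big[(u_t,v_t)^{(l^1_0,l^2_0)}\big]=\E^{(l^1_0,l^2_0)}\big[(u_0,v_0)^{(l^1_t,l^2_t)}e^{\gamma(L_t^=+\varrho L_t^{\neq})}\big],
\end{align*}
and then read the left-hand side as the moment $\E[u_t(k_1)\cdots u_t(k_n)v_t(k_{n+1})\cdots v_t(k_{n+m})]$ by taking $l^1_0,l^2_0$ to be the occupation configurations of $k_1,\dots,k_n$ and $k_{n+1},\dots,k_{n+m}$. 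The main obstacle is not the algebra of the generator identity, which is routine, but the justification that the semigroup relation actually follows from the generator relation: one must control integrability so that the process $s\mapsto \E[H(X_{t-s},Y_s)e^{\int_0^s f(Y_r)dr}]$ is differentiable with vanishing derivative. This requires uniform-in-time moment bounds on $(u_t,v_t)$ of the polynomial order appearing in $H$ — exactly the kind of bound of the form $\E[u_s(i)v_s(i)]\le C e^{\gamma T}$ derived in the proof of Theorem~\ref{ex:dis}, extended to higher mixed moments — together with the finiteness and boundedness of the number of dual particles (so $L^=_t,L^{\neq}_t\le \binom{n+m}{2}t$ and the exponential factor is bounded). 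With those ingredients in place the localization/martingale argument of \cite{EK86} closes the proof; since this is a review-style exposition, I would cite \cite{EF04} for the detailed verification and present only the generator computation as the conceptual heart.
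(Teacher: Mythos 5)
The paper does not actually prove Lemma~\ref{la:mdual}; it cites Section~3 of \cite{EF04} verbatim. Your proposal reconstructs the standard generator-duality argument, which is indeed what \cite{EF04} does, so the overall strategy is the right one. Two points deserve correction.

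First, the sentence ``the mixed $\varrho$-term corresponds to a different-colour recolouring'' is wrong. Compute it: $\varrho\gamma\, u(k)v(k)\,\partial_{u(k)}\partial_{v(k)} H = \varrho\gamma\, A_1(k)A_2(k)\, H$, which leaves $(A_1,A_2)$ unchanged. The mixed term produces \emph{no} jump in the dual process; in the particle description of \cite{EF04} different-coloured co-located pairs do not recolour, they only accumulate $L^{\neq}$. The jump part of $A^Y$ comes exclusively from the diagonal terms $\frac{\gamma}{2}u(k)v(k)\partial^2_{u(k)} H = \gamma\binom{A_1(k)}{2} H'$ (with $A_1(k)\mapsto A_1(k)-1$, $A_2(k)\mapsto A_2(k)+1$) and its $v$-analogue, and one then writes $\gamma\binom{A_1(k)}{2}H'=\gamma\binom{A_1(k)}{2}(H'-H)+\gamma\binom{A_1(k)}{2}H$ to split jump from potential. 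Your final bookkeeping (the $\varrho$-piece lands entirely in the potential) is consistent with this, so the conclusion survives, but anyone following the intermediate sentence literally would build the wrong dual process. Relatedly, $f(A_1,A_2)$ should be the \emph{instantaneous} collision rate $\gamma\sum_k\big(\binom{A_1(k)}{2}+\binom{A_2(k)}{2}+\varrho\,A_1(k)A_2(k)\big)$, not the cumulative $\gamma(L^=+\varrho L^{\neq})$; the latter is $\int_0^t f(Y_s)\,ds$.

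Second, be careful about citing $\E[u_s(i)v_s(i)]\le Ce^{\gamma T}$ from the proof of Theorem~\ref{ex:dis} to justify the localization step: in that proof the bound is itself obtained ``in the same spirit of the moment duality,'' so invoked as you do it is circular. The standard way out (and the one implicit in the paper's existence proof) is to first verify the duality for the finite truncated systems $\textrm{SBM}^n_\gamma$, where integrability is trivial because one is dealing with a nice finite-dimensional diffusion against finitely many dual particles, obtain the uniform-in-$n$ moment bounds from that, pass to the limit, and only then assert the duality for the infinite system. Your sketch should say this explicitly rather than point to a moment bound whose provenance is the very lemma being proved.
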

	In the general setting $\mathcal A\neq \Delta$, only the dynamics of the single particles needs to be changed: they move as continuous-time Markov process with generator $\mathcal A$ and perform the same changes of 		colors. \\
	A simpler situation occurs if the initial conditions are homogeneous. If $u_0=v_0=\1$, then the righthand side of the duality only consists of the exponential perturbation.
	\begin{remark}
		 In the special case of $\varrho=1$ and $u_0=v_0=\1$, Lemma \ref{la:mdual} was already stated in \cite{CM}, reproved in \cite{GdH07}, and used to analyze the Lyapunov exponents of the parabolic Anderson model. Since 		then the collision times $L^=_t$ and $L_t^{\neq}$ are weighted equally, the colors can be ignored so that only exponential moments of collision times need to be analyzed.
	\end{remark}

	For $\varrho\neq 1$, the difficulty of the dual expression is based on the two interacting stochastic effects: on the one hand, one has to deal with collision times of random walks which were analyzed in \cite{GdH07} and 		additionally particles have colors either $1$ or $2$ which change dynamically.
		
\subsection{Longtime Behavior and the Exit-Law of $\varrho$-correlated Brownian Motions}\label{sec:longtime}
	Now, after we discussed  the basic properties and tools, we can derive the connections of $\textrm{SBM}_\gamma$ and the exit-law $Q^\varrho$. This will explain some longtime properties of $\textrm{SBM}_{\gamma}$. For $\varrho=0$ these properties go back to \cite{DP98} 	and \cite{CK00}, and they were studied  for $\varrho\in (-1,1)$	in \cite{BDE11} and  
 for $\varrho\in (-1,0)$ in~\cite{DM11}.	
	In the following we briefly explain the main results on the longtime behavior and sketch the ideas of the proofs. 
  Most importantly, we show how the exit law (\ref{exitd}) can be related to 
	$\lim_{t\to\infty} \langle u_t,\1\rangle$, via Lemma \ref{lem:3},  and then this has several interesting consequences:  the weak limit	
		$\lim_{t\to\infty} \mathcal L (u_t,v_t)$
	can be deduced for infinite initial conditions unifying two classical results for the stepping stone model and the parabolic Anderson model with Brownian potential. From this, a general technique of \cite{CK00} can be applied 	to deduce non-convergence in the almost sure sense of the $L_\beta^2$-valued processes $(u_t,v_t)$.
	 As a final consequence of the connection to $Q^\varrho$, we discuss how a critical curve for the behavior of higher moments
	can be deduced which then leads us to the definition of $\nu^\varrho$ (from Theorem~\ref{0}). 

\subsubsection{Longtime Analysis for the Total Mass - Transience/Recurrence-Dichotomy}\label{longl}
	A typical question for interacting particle systems is the following: does the system get extinct eventually if the initial conditions are summable? For several examples this question can be answered easily by taking 	into account nice duality structures. For instance, as explained in the introduction, for the voter process this question is equivalent to finite time coalescence of finitely many random walks which again is equivalent to recurrence 	of the random walks.\\
	The question is more subtle for $\textrm{SBM}_\gamma$ by lack of knowledge of a useful duality; the self-duality and the moment-duality are not very helpful here.
 Nonetheless, a weaker question can still be			answered by other arguments.
	Also note that instead of extinction/non-extinction, the question of coexistence/non-coexistence is addressed 
for two type symbiotic model. Let us first define a notion of ``coexistence" in the two-type $\textrm{SBM}_{\gamma}$ model for which we utilize the martingale property of the non-negative total-mass processes (recall Lemma \ref{lem:3}). The martingale convergence theorem implies existence of
		\begin{align*}
		\lim_{t\to\infty}\langle u_t,\1\rangle\langle v_t,\1\rangle=:\langle u_\infty,\1\rangle\langle v_\infty,\1\rangle \in [0,\infty)
	\end{align*}
	leading to the following definition.
	\begin{definition}\label{def:coex}
		We say that coexistence (of types) is possible, if there are summable initial conditions $u_0,v_0$ such that  $\langle u_\infty,\1\rangle\langle v_\infty,\1\rangle>0$ with positive probability. Otherwise we say that coexistence is impossible.
	\end{definition}
	
	For $\textrm{MCB}_\gamma$, Dawson/Perkins proved the following recurrence/transience dichotomy. (Recall again that we state everything for the case of $S=\Z^d$ and $\mathcal A=\Delta$, while some of the results have been proved in a more general setting.)
	\begin{thm}[Dichotomy for Finite Initial Conditions]\label{thm:dp}
		Coexistence of types for $\textrm{MCB}_\gamma$ is possible if and only if a simple random walk on $\Z^d$ is transient (i.e. $d\geq 3$).
	\end{thm}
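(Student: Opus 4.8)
The plan is to reduce the coexistence question for $\textrm{MCB}_\gamma$ (i.e.\ $\textrm{SBM}_\gamma$ with $\varrho=0$) to the exit behaviour of planar Brownian motion, using the total-mass martingales from Lemma~\ref{lem:3}. Starting from summable $u_0,v_0$, the pair $\big(\langle u_t,\1\rangle,\langle v_t,\1\rangle\big)$ is, by Lemma~\ref{lem:3}, a pair of non-negative square-integrable martingales whose bracket processes satisfy, for $\varrho=0$,
\begin{align*}
 [\langle u_\cdot,\1\rangle]_t=[\langle v_\cdot,\1\rangle]_t=\gamma\int_0^t\langle u_s,v_s\rangle\,ds,\qquad [\langle u_\cdot,\1\rangle,\langle v_\cdot,\1\rangle]_t=0.
\end{align*}
Thus after the common (strictly increasing up to absorption) time change $t\mapsto A_t:=\gamma\int_0^t\langle u_s,v_s\rangle\,ds$ the total masses become a pair of \emph{independent} Brownian motions $(W^1,W^2)$ started at $(\langle u_0,\1\rangle,\langle v_0,\1\rangle)$, run until $A_\infty$. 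Coexistence in the sense of Definition~\ref{def:coex} means $\langle u_\infty,\1\rangle\langle v_\infty,\1\rangle>0$ with positive probability; since each total mass is a non-negative martingale it is absorbed at $0$ once it hits $0$, and coexistence therefore amounts to: with positive probability \emph{neither} coordinate ever hits $0$, equivalently the time-changed planar Brownian motion never leaves the open first quadrant, equivalently $A_\infty<\tau$ where $\tau=\inf\{t:W^1_tW^2_t=0\}$ in the notation of~(\ref{exitt}). So the whole problem is to decide whether $A_\infty=\gamma\int_0^\infty\langle u_s,v_s\rangle\,ds$ can be finite with positive probability.

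The key step is a dichotomy for $A_\infty$ governed by the recurrence/transience of the underlying random walk. First, the ``only if'' (recurrent case, $d\le 2$): here one shows $A_\infty=\infty$ a.s., so the planar Brownian motion always exits the quadrant and one of the types dies out. The natural route is a second-moment / duality estimate: using the moment duality (Lemma~\ref{la:mdual} with $\varrho=0$) one gets $\E[\langle u_t,v_t\rangle]$ in terms of two random walks that coalesce (change colour) at rate $\gamma$ on meeting; in the recurrent case the expected collision local time diverges, forcing $\E[A_\infty]=\infty$, and a Borel--Cantelli / martingale argument upgrades this to $A_\infty=\infty$ a.s. (Alternatively one can argue directly: if $A_\infty<\infty$ on a set of positive probability then the bracket of $\langle u_\cdot,\1\rangle$ is bounded, forcing $\langle u_t,v_t\rangle\to 0$, i.e.\ the configurations separate, but in a recurrent environment the diffusive mixing of the heat flow $P_t$ prevents persistent spatial segregation of two summable positive masses.) For the ``if'' direction (transient case, $d\ge 3$): one must exhibit initial conditions for which $A_\infty<\tau$ with positive probability. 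Take $u_0,v_0$ supported far apart (or more robustly, take $u_0=v_0=\delta_0$ scaled, and localise); the heat semigroups $P_tu_0$ and $P_tv_0$ decay like $t^{-d/2}$ and their overlap $\langle P_tu_0,P_tv_0\rangle$ is summable in $t$ when $d\ge 3$, so the deterministic part contributes a finite amount to $A_\infty$. One then controls the fluctuation terms in the mild representation (\ref{14}) via the second-moment bound above to show $\E[A_\infty]<\infty$, hence $A_\infty<\infty$ a.s.; finally, on the event that the time-changed planar Brownian motion stays in a fixed compact subrectangle of the open quadrant up to time $A_\infty$ — an event of positive probability, obtainable because $A_\infty$ can be taken small with positive probability by scaling down the initial data — one has $A_\infty<\tau$, giving coexistence.

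The main obstacle is the transient-case lower bound, i.e.\ actually producing an event of positive probability on which the running clock $A_\infty$ stays below the exit time $\tau$. Bounding $\E[A_\infty]$ alone is not enough, since $\tau$ is not independent of $A$; one needs the joint statement that the planar Brownian motion, whose total elapsed time is the random quantity $A_\infty$, has not exited. The clean way around this is the scaling/localisation observation: by choosing the initial masses $\langle u_0,\1\rangle,\langle v_0,\1\rangle$ away from the axes and the total masses small, $\P[A_\infty<\varepsilon]$ can be made close to $1$, while $\P[\tau>\varepsilon\mid W^1_0=a,W^2_0=b]$ is bounded below uniformly for $(a,b)$ in a compact subset of the open quadrant; combining these gives the required positive-probability event. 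This is precisely the step where the structural input of Dawson--Perkins (the explicit second-moment control and the careful choice of initial configuration in $d\ge 3$) is doing the real work, and it is the part I would expect the authors' proof to spend most effort on; everything else is the time-change bookkeeping and the recurrent-case divergence of collision local time, both of which are standard given Lemmas~\ref{lem:3} and~\ref{la:mdual}.
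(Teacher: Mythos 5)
Your global strategy — time-change the total-mass martingales via Lemma~\ref{la:eds} and compare the running clock $A_\infty=\gamma\int_0^\infty\langle u_s,v_s\rangle\,ds$ with the exit time $\tau$ of the resulting planar Brownian motion — matches the paper's informal picture after Lemma~\ref{la:eds}. But there is a logical slip running through the whole proposal: you treat ``$A_\infty<\infty$'' and ``$A_\infty<\tau$'' as interchangeable, and they are not. Once a total mass hits $0$ the corresponding configuration is absorbed at $\mathbf 0$, so the clock cannot run past $\tau$, i.e.\ $A_\infty\le\tau$ always; and $\tau<\infty$ a.s.\ because planar Brownian motion exits the quadrant a.s. Hence $A_\infty<\infty$ holds a.s.\ in \emph{every} dimension, and it carries no information. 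The coexistence event is $\{A_\infty<\tau\}$. In particular your recurrent-case claim that ``one shows $A_\infty=\infty$ a.s.'' is impossible (it contradicts $A_\infty\le\tau<\infty$), and it is even inconsistent with your next clause, since $A_\infty=\infty$ would mean the Brownian motion never reaches the boundary. What must be shown in $d\le 2$ is $A_\infty=\tau$ a.s.

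The deeper problem is that your moment-duality estimate does not establish this. With $\varrho=0$ the two dual particles carry different colours and therefore never interact, so Lemma~\ref{la:mdual} gives the exact identity $\E\langle u_t,v_t\rangle=\langle P_tu_0,P_tv_0\rangle=\langle u_0,P_{2t}v_0\rangle$ and hence $\E[A_\infty]=\tfrac{\gamma}{2}\langle u_0,Gv_0\rangle$ with $G$ the Green operator. In $d\le 2$ this is indeed $+\infty$, but so is $\E[\tau]$: by Lemma~\ref{ete} at $\varrho=0$ one has $\E[\tau^r]<\infty$ iff $r<1$, so $\E[\tau]=\infty$, and $\E[A_\infty]=\infty$ is perfectly consistent with $\P[A_\infty<\tau]>0$. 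The recurrent direction is the genuinely hard part of \cite{DP98}, and your ``diffusive mixing prevents persistent segregation'' sketch is just a heuristic at the only place that matters. By contrast, once the distinction is kept straight the transient direction is clean and simpler than your scaling/localisation/union-bound sketch: for transient walks the Green function is bounded, so for any summable $u_0,v_0$ one has $\E[A_\infty]=\tfrac{\gamma}{2}\langle u_0,Gv_0\rangle<\infty$, while $\E[\tau]=\infty$; since $A_\infty\le\tau$, it follows that $\P[A_\infty<\tau]>0$, which is coexistence. The paper itself only records the time-change picture and defers the details to \cite{DP98}, so there is no full proof to compare against; on its own terms your proposal needs the $A_\infty<\tau$ versus $A_\infty<\infty$ distinction corrected throughout, and the $d\le 2$ direction is not reached by the moment computation you invoke.
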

	
	For $\varrho\neq 0$ the situation is not completely clear, yet.
The following result was proved in \cite{BDE11}.
\begin{prop}
\label{prop:100}
	  Let  $\varrho\neq 0$, then coexistence of types for $\textrm{SBM}_\gamma(\varrho)$  
  is impossible if a simple random walk on $\Z^d$ is recurrent (i.e. $d=1,2$).
	\end{prop}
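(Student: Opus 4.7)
The idea is to exploit the martingale structure of the total mass processes together with the exit law $Q^\varrho_{u,v}$ of correlated planar Brownian motion introduced in (\ref{exitd}). Set $M_t := \langle u_t, \1\rangle$ and $N_t := \langle v_t, \1\rangle$. Lemma~\ref{lem:3} tells us that $(M, N)$ is a two-dimensional continuous martingale with
$$[M]_t = [N]_t = \gamma \int_0^t \langle u_s, v_s\rangle\, ds, \qquad [M, N]_t = \varrho\, [M]_t.$$
Both coordinates converge almost surely by martingale convergence to finite limits $M_\infty, N_\infty \in [0, \infty)$, and coexistence in the sense of Definition~\ref{def:coex} amounts to having $\P(M_\infty N_\infty > 0) > 0$ for some choice of summable initial conditions.

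Applying Itô's formula to the product $M_t N_t$ and localising yields
$$\E[M_t N_t] = M_0 N_0 + \varrho \gamma \int_0^t \E[\langle u_s, v_s\rangle]\, ds.$$
For $\varrho < 0$ the non-negativity of $M_t N_t$ forces $\int_0^\infty \E[\langle u_s, v_s\rangle]\, ds \leq M_0 N_0/(|\varrho|\gamma)$, so $(M, N)$ is $L^2$-bounded and converges in $L^2$. The geometric picture then comes from the multidimensional Dambis--Dubins--Schwarz theorem: one represents $(M_t - M_0, N_t - N_0) = (W^1_{\tau_t}, W^2_{\tau_t})$ for a $\varrho$-correlated planar Brownian motion $(W^1, W^2)$, with random time change $\tau_t := [M]_t$. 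Non-negativity of $(M_t, N_t)$ bounds $\tau_t$ by the exit time $\tau := \inf\{s \geq 0: (M_0 + W^1_s)(N_0 + W^2_s) = 0\}$ of the shifted Brownian path from the first quadrant. Consequently, coexistence corresponds to the event $\{\tau_\infty < \tau\}$, while non-coexistence is $\{\tau_\infty = \tau\}$, in which case $(M_\infty, N_\infty) \sim Q^\varrho_{M_0, N_0}$, a law concentrated on the boundary $E$.

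The main analytic difficulty---and the hardest step---is to show $\tau_\infty = \tau$ almost surely in recurrent dimensions $d = 1, 2$. Intuitively, recurrence of simple random walk prevents the two types from separating spatially, so $\langle u_s, v_s\rangle$ cannot vanish fast enough to freeze the clock $\tau_t$ before the Brownian path reaches the boundary. For $\varrho < 0$ one attacks the Volterra equation
$$\E[\langle u_t, v_t\rangle] = g(t) + \varrho\gamma \int_0^t p_{2(t-s)}(0,0)\, \E[\langle u_s, v_s\rangle]\, ds$$
deduced from the mild solution (\ref{14}), where $g(t) = \sum_{i, j} u_0(i) v_0(j)\, p_{2t}(i, j)$ has non-integrable tail on $\Z^d$ for $d \leq 2$ by classical heat-kernel asymptotics. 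For $\varrho > 0$ the $L^2$-bound is lost and one instead invokes the moment duality of Lemma~\ref{la:mdual} with $n = m = 1$:
$$\E[u_t(k_1) v_t(k_2)] = \E_{k_1, k_2}\!\left[u_0(X^1_t) v_0(X^2_t)\, e^{\gamma\varrho L^\neq_t}\right],$$
where $L^\neq_t$ is the collision time of two independent simple random walks; in $d \leq 2$ this quantity diverges almost surely and the resulting exponential blow-up is incompatible with coexistence. In either case one concludes $\E[M_\infty N_\infty] = 0$, forcing $M_\infty N_\infty = 0$ almost surely.
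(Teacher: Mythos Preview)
The paper does not give its own proof of Proposition~\ref{prop:100}; it cites \cite{BDE11} and, through Lemma~\ref{la:eds} and Remark~\ref{rem:2807_1}, indicates that the argument is the Dawson--Perkins approach for $\varrho=0$ carried over to $\varrho\in(-1,1)$: represent $(\langle u_t,\1\rangle,\langle v_t,\1\rangle)$ as a time-changed $\varrho$-correlated planar Brownian motion and show that in recurrent dimensions the clock $T(\infty)=\gamma\int_0^\infty\langle u_s,v_s\rangle\,ds$ reaches the exit time $\tau$ almost surely. Your framework is exactly this and your reduction to the statement $\tau_\infty=\tau$ a.s.\ is correct.

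The gap is in how you close that statement. For $\varrho>0$ you claim the exponential blow-up of $\E[u_t(k_1)v_t(k_2)]$ via the moment duality is ``incompatible with coexistence'' and conclude $\E[M_\infty N_\infty]=0$. This inference does not hold: divergence of $\E[M_tN_t]$ only means $(M_tN_t)_t$ fails to be uniformly integrable, and Fatou gives merely $\E[M_\infty N_\infty]\le\liminf_t\E[M_tN_t]$, which is no constraint at all when the right side is infinite. Large or exploding mixed moments are entirely consistent with $\P(M_\infty N_\infty>0)>0$; they are not evidence against coexistence.

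For $\varrho<0$ your Volterra equation and the observation that $g(t)=\langle P_tu_0,P_tv_0\rangle$ has non-integrable tail in $d\le2$ are correct, but you do not say how this forces $\tau_\infty=\tau$. The Volterra relation with negative kernel yields only the \emph{upper} bound $\E[\langle u_t,v_t\rangle]\le g(t)$, and a non-integrable upper bound does not contradict the finiteness of $\int_0^\infty\E[\langle u_s,v_s\rangle]\,ds$ that you already established from It\^o. What is actually needed is a \emph{lower} bound on the spatial overlap on the coexistence event---showing that if both total masses survive then recurrence of the underlying walk prevents $\langle u_t,v_t\rangle$ from being integrable in time. This is the substance of the Dawson--Perkins argument (adapted in \cite{BDE11}) and it is not supplied by either of the two mechanisms you propose.
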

 For $\varrho<0$ the above result has been improved in~\cite{DM11} and the following recurrence/transience dichotomy 
has been verified there via moment arguments.
	\begin{prop}
\label{prop:101}
		Let $\varrho<0$, then coexistence of types for $\textrm{SBM}_\gamma(\varrho)$ is possible if and only if a simple random walk on $\Z^d$ is transient (i.e. $d\geq 3$).
	\end{prop}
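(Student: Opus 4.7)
The ``only if'' direction of the dichotomy is already covered by Proposition~\ref{prop:100}, so the task reduces to exhibiting, for each $\varrho<0$ and each transient dimension $d\geq 3$, summable initial conditions $(u_0,v_0)$ under which $\P(\langle u_\infty,\1\rangle\langle v_\infty,\1\rangle>0)>0$. The plan is to control the expectation of the product of the two total-mass martingales $M^1_t:=\langle u_t,\1\rangle$ and $M^2_t:=\langle v_t,\1\rangle$, and to show that this expectation stays strictly positive in the limit once the supports of $u_0$ and $v_0$ are placed sufficiently far apart.

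First I would apply It\^o's formula to $M^1_t M^2_t$, using the cross-variation supplied by Lemma~\ref{lem:3}, to obtain
\begin{equation*}
\E[M^1_t M^2_t]=\langle u_0,\1\rangle\langle v_0,\1\rangle+\varrho\gamma\int_0^t\E[\langle u_s,v_s\rangle]\,ds.
\end{equation*}
Since $\varrho<0$ the right-hand side is non-increasing in $t$; this already forces $\int_0^\infty\E[\langle u_s,v_s\rangle]\,ds\leq\langle u_0,\1\rangle\langle v_0,\1\rangle/(|\varrho|\gamma)$, which together with the quadratic variation formula in Lemma~\ref{lem:3} makes both $M^1$ and $M^2$ bounded in $L^2$, hence $M^1_tM^2_t$ uniformly integrable. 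Passing to the limit yields the identity
\begin{equation*}
\E[M^1_\infty M^2_\infty]=\langle u_0,\1\rangle\langle v_0,\1\rangle-|\varrho|\gamma\int_0^\infty\E[\langle u_s,v_s\rangle]\,ds,
\end{equation*}
so coexistence is reduced to making the subtracted integral strictly smaller than the first term for a suitable choice of initial data.

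To bound $\E[\langle u_s,v_s\rangle]=\sum_i\E[u_s(i)v_s(i)]$ from above I would invoke the moment duality of Lemma~\ref{la:mdual} with a single pair of particles, one of each colour, both started at~$i$. Since no same-colour pair is ever present, no colour change ever occurs and $L_s^{=}\equiv 0$, so the dual expectation collapses to
\begin{equation*}
\E[u_s(i)v_s(i)]=\E_{i,i}\bigl[u_0(X^1_s)v_0(X^2_s)\,e^{\gamma\varrho L_s^{\neq}}\bigr]\leq\E_{i,i}\bigl[u_0(X^1_s)v_0(X^2_s)\bigr],
\end{equation*}
where $X^1,X^2$ are independent continuous-time simple random walks and the inequality crucially uses $\varrho<0$. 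Summing over $i$, the symmetry of $p_s$ and Chapman--Kolmogorov give $\E[\langle u_s,v_s\rangle]\leq\langle u_0,P_{2s}v_0\rangle$ and hence, after integrating in time,
\begin{equation*}
\int_0^\infty\E[\langle u_s,v_s\rangle]\,ds\leq\tfrac{1}{2}\langle u_0,Gv_0\rangle,
\end{equation*}
where $G$ is the Green kernel of the simple random walk on $\Z^d$, which is finite and decays to zero at infinity exactly when $d\geq 3$.

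Combining the two displays, the natural choice $u_0=c\delta_x$, $v_0=c\delta_y$ gives $\E[M^1_\infty M^2_\infty]\geq c^2\bigl(1-\tfrac{|\varrho|\gamma}{2}G(x,y)\bigr)$, which is strictly positive as soon as $|x-y|$ is taken large enough that $\tfrac{|\varrho|\gamma}{2}G(x,y)<1$. Since $M^1_\infty,M^2_\infty\geq 0$, positivity of the product's expectation forces $\P(M^1_\infty M^2_\infty>0)>0$, proving that coexistence is possible in $d\geq 3$. The step I expect to require the most care is the transition from the finite-$n$ moment duality of Lemma~\ref{la:mdual} to the point-mass initial conditions: one must verify that the duality identity extends by monotone approximation, and that the $L^2$-martingale convergence together with uniform integrability are clean enough to let one take $t\to\infty$ inside the expectation. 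Once these technicalities are in place, the argument is only two It\^o calculations and an elementary Green-function estimate.
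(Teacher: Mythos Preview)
Your argument is correct and is precisely the kind of ``moment argument'' the paper alludes to; note, however, that the paper itself does not prove Proposition~\ref{prop:101} but only cites \cite{DM11}, so there is no in-paper proof to compare against line by line. Your route---computing $\E[M^1_tM^2_t]$ via the cross-variation of Lemma~\ref{lem:3}, bounding $\E[u_s(i)v_s(i)]$ by the two-particle moment duality of Lemma~\ref{la:mdual} (where $L^{=}_s\equiv 0$ and the factor $e^{\gamma\varrho L_s^{\neq}}\leq 1$ for $\varrho<0$), and then reducing the time-integral to a Green-function quantity that can be made small by separating the two point-mass supports---is exactly the mechanism one expects behind that citation.

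Two small remarks. First, your caveat about ``extending the duality to point-mass initial conditions'' is unnecessary: Lemma~\ref{la:mdual} is stated for $(u_0,v_0)\in L^2_\beta$, and compactly supported (in particular Dirac) initial data lie there. Second, the passage from ``$M^1,M^2$ bounded in $L^2$'' to ``$M^1_tM^2_t$ uniformly integrable'' deserves one more word: $L^2$-boundedness of the nonnegative martingales gives $M^i_t\to M^i_\infty$ in $L^2$, and then $M^1_tM^2_t\to M^1_\infty M^2_\infty$ in $L^1$ by Cauchy--Schwarz applied to the difference, which is what you actually need to take $t\to\infty$ in the product identity.
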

	Propositions~\ref{prop:100} and \ref{prop:101} imply that the question that remains open is whether coexistence of types is possible whenever  $\varrho>0$ and a simple random walk on $\Z^d$ is transient (i.e. $d\geq 3$).
	\begin{conj}
		We conjecture that if $d\geq 3$ and $\varrho>0$, there is a critical constant $\gamma(\varrho,d)\in (0,\infty)$ such that coexistence of types for $\textrm{SBM}_\gamma(\varrho)$ occurs if $\gamma<\gamma(\varrho,d)$ and coexistence is impossible if $\gamma>\gamma(\varrho,d)$.
	\end{conj}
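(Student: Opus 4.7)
The natural strategy is to reduce the coexistence dichotomy to a sharp threshold for the second mixed moment, via the martingale structure of the total masses combined with the moment duality of Lemma \ref{la:mdual}. Writing $M_t = \langle u_t,\1\rangle$ and $N_t = \langle v_t,\1\rangle$, Itô's formula together with Lemma \ref{lem:3} gives
\begin{align*}
\E[M_t N_t] = M_0 N_0 + \varrho\gamma \int_0^t \E[\langle u_s, v_s\rangle]\,ds,
\end{align*}
which is non-decreasing in $t$ when $\varrho>0$. The plan is then: if $\sup_t \E[M_t N_t] < \infty$ together with some mild uniform integrability (coming for free from an $L^p$ bound with $p>1$, itself obtainable from Lemma \ref{la:mdual}), then $\E[M_\infty N_\infty] \geq M_0 N_0 > 0$ and coexistence holds; conversely, blow-up of this second moment should, with extra work, force the non-coexistence phase.

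Applying the moment duality with one particle of each color (so no same-color pairs arise and $L^=_s\equiv 0$) and a translation by the common starting position gives, for summable $u_0, v_0$,
\begin{align*}
\sum_{k\in\Z^d}\E[u_s(k)v_s(k)] = \E\bigl[\langle u_0, \tau_{D_s}v_0\rangle\, e^{\gamma\varrho L^{\neq}_s}\bigr],
\end{align*}
where $D_s=\xi^2_s-\xi^1_s$ is the difference of two independent simple random walks from the origin, $\tau_x$ denotes translation by $x$, and $L^{\neq}_s$ is the intersection local time up to time $s$. In $d\geq 3$ the walk $D$ is transient, so $L^{\neq}_\infty<\infty$ is in fact exponentially distributed, and hence $\E[e^{\lambda L^{\neq}_\infty}]<\infty$ precisely for $\lambda<\lambda_c$ for some $\lambda_c>0$; simultaneously, $\langle u_0,\tau_{D_s}v_0\rangle\to 0$ almost surely and has $s^{-d/2}$-decay in expectation. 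A Cauchy--Schwarz split then bounds $\int_0^\infty\E[\langle u_s,v_s\rangle]\,ds$ uniformly whenever $\varrho\gamma$ is strictly below an explicit threshold $\gamma^*(\varrho,d)>0$, proving coexistence for such $\gamma$. Since the integrand on the right of the duality formula is manifestly increasing in $\gamma$, the set of $\gamma$ for which the argument produces coexistence is an interval of the form $(0,\gamma(\varrho,d))$, and one takes $\gamma(\varrho,d)$ as its supremum.

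The real obstacle is the opposite direction: non-coexistence for $\gamma>\gamma(\varrho,d)$. Mere blow-up of $\E[M_t N_t]$ does not preclude $\P(M_\infty N_\infty>0)>0$, so some additional mechanism must be identified that forces one type into extinction. A natural route is to invoke the self-duality (\ref{la:selfdual}) to translate a second-moment divergence into a degeneracy of the one-dimensional marginals towards the boundary set $E=\{(y_1,0):y_1\geq 0\}\cup\{(0,y_2):y_2\geq 0\}$, very much in the spirit of the infinite-rate limit $\textrm{SBM}_\infty$ described in Theorem \ref{0}, which can be read as the $\gamma=\infty$ endpoint of the conjectured transition. A complementary approach would be to sharpen the argument of Proposition \ref{prop:100} from \cite{BDE11}: in $d\geq 3$, the colored dual particle system of Lemma \ref{la:mdual} might admit a local large-deviation estimate forcing one color to disappear once $\gamma$ is large enough. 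Establishing such a converse is, however, expected to require genuinely new ideas beyond the second-moment calculus sketched above, and this step is what I expect to present the main difficulty in proving the conjecture.
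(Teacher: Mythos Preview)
The statement you are attempting to prove is a \emph{conjecture} in the paper; the paper provides no proof, only the remark that the conjecture is motivated by an analogous result for the parabolic Anderson model ($\varrho=1$) from \cite{GdH07}. So there is no ``paper's own proof'' to compare against, and your text should be read as a proof strategy for an open problem rather than a reconstruction of a known argument.

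As a strategy, your outline has genuine content for the coexistence direction at small $\gamma$: the two-particle moment duality with one particle of each color indeed has $L^=\equiv 0$ and a $\gamma$-independent collision time $L^{\neq}$, so $\E[\langle u_s,v_s\rangle]$ is monotone in $\gamma$ and integrable in $s$ for small $\gamma$ in $d\ge 3$. But two steps are not yet justified. First, the uniform integrability of $M_tN_t$ does not follow from an $L^p$ bound on $M_t$ and $N_t$ separately with $1<p<p(\varrho)<2$; you would need control of $\E[(M_tN_t)^p]$ for some $p>1$, which is a genuine four-particle (mixed higher-moment) estimate and is not delivered by Lemma \ref{la:mdual} for free. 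Second, and more structurally, monotonicity of $\E[M_tN_t]$ in $\gamma$ shows only that \emph{your particular second-moment argument} works on an interval $(0,\gamma^*)$; it does not show that the coexistence property itself is monotone in $\gamma$. Without such monotonicity (which is exactly what is hard, since the law of $(u^\gamma,v^\gamma)$ is not obviously monotone in $\gamma$), defining $\gamma(\varrho,d)$ as a supremum does not yield a sharp threshold.

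You correctly flag the non-coexistence direction for large $\gamma$ as the real obstacle, and you are honest that nothing in your sketch addresses it. The two routes you suggest (self-duality degeneration towards $E$, or a large-deviation analysis of the colored dual) are reasonable heuristics, but neither has been carried out; this is precisely why the paper leaves the statement as a conjecture.
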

	The conjecture is based on a known similar statement for the parabolic Anderson model corresponding to $\varrho=1$ (see \cite{GdH07}).
	\medskip
	
	In the following we are going to sketch the arguments used in the approach of \cite{DP98} to prove Theorem \ref{thm:dp}.
	\begin{lem}\label{la:eds}
		Suppose $u_0$ and $v_0$ are summable and let
			$T(t)=\gamma \int_0^t \langle u_s,v_s\rangle\,ds.$
		Then
		\begin{align*}
	 		(W^1_t,W^2_t):=\big(\langle u_{T^{-1}(t)},\1\rangle,\langle v_{T^{-1}(t)},\1\rangle\big)
		\end{align*}
 		is a pair of $\varrho$-correlated Brownian motions stopped when it hits the boundary $E$ of the first quadrant.
	\end{lem}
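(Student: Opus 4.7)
The plan is a Dubins--Schwarz style time change applied to the pair of total-mass martingales $M^1_t := \langle u_t, \mathbf{1}\rangle$ and $M^2_t := \langle v_t, \mathbf{1}\rangle$, with the $\varrho$-correlation absorbed into a preliminary orthogonalisation. Lemma~\ref{lem:3} supplies exactly the data needed: both $M^1$ and $M^2$ are continuous, non-negative, square-integrable martingales whose quadratic variations coincide with $T(t)$ and whose cross-variation is $\varrho T(t)$. Since $T$ is continuous and non-decreasing, its right-continuous inverse $T^{-1}$ is well-defined on $[0, T(\infty))$.

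For $\varrho \in (-1, 1)$ I would decouple via a Cholesky-type transformation: set $N^1 := M^1$ and $N^2 := (M^2 - \varrho M^1)/\sqrt{1-\varrho^2}$. Bilinearity of the bracket then gives $[N^i, N^i]_t = T(t)$ and $[N^1, N^2]_t = 0$, so that $N^1, N^2$ are orthogonal continuous martingales sharing the same clock. The joint Dubins--Schwarz (Knight) theorem, after enlarging the probability space if necessary to adjoin independent Brownian pieces on $[T(\infty), \infty)$, produces independent standard Brownian motions $\tilde W^1, \tilde W^2$ with $\tilde W^i_t = N^i_{T^{-1}(t)}$ on $[0, T(\infty))$. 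Reassembling via $W^1 := \tilde W^1$ and $W^2 := \varrho \tilde W^1 + \sqrt{1-\varrho^2}\,\tilde W^2$ yields a pair of $\varrho$-correlated Brownian motions starting at $(\langle u_0, \mathbf{1}\rangle, \langle v_0, \mathbf{1}\rangle)$ with $W^i_t = M^i_{T^{-1}(t)}$ on $[0, T(\infty))$ by construction. The degenerate cases $\varrho = \pm 1$ reduce to a single one-dimensional Dubins--Schwarz, since $M^1 \mp M^2$ then has vanishing quadratic variation and is therefore constant.

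To complete the picture one interprets the stopping; this is where I expect the main obstacle to lie. Non-negativity of $M^1, M^2$ keeps $(W^1, W^2)$ in the closed first quadrant, and one must identify $T(\infty)$ with the exit time of the BM from its interior. The key observation is absorption: if $M^1$ hits $0$ at some time $s_\ast$, then $u_{s_\ast} \equiv 0$, and since both the drift $\Delta u$ and the noise $\sqrt{\gamma uv}$ in the $\textrm{SBM}_\gamma$ equations vanish on the all-zero configuration, $u_s \equiv 0$ for every $s \geq s_\ast$. Consequently $\langle u_s, v_s\rangle \equiv 0$ for $s \geq s_\ast$, the clock $T$ plateaus at $s_\ast$, and $(W^1, W^2)$ is frozen on $E$ from time $T(s_\ast) = T(\infty)$ onwards; the role of $M^1$ and $M^2$ is symmetric. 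This matches the ``stopped upon hitting $E$'' formulation in the lemma.
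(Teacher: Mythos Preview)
Your approach is essentially the paper's: invoke Lemma~\ref{lem:3} for the bracket structure and then perform a Dubins--Schwarz time change. The paper's sketch is terser---since $M^1$ and $M^2$ share the \emph{same} clock $T$, one can time-change the pair simultaneously and read off the $\varrho$-correlation directly from the cross-variation via a two-dimensional L\'evy characterisation, bypassing your orthogonalise/Knight/recombine detour; your route is equally valid and arguably more explicit about the joint law.

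One clarification on the stopping discussion: you frame the task as ``one must identify $T(\infty)$ with the exit time of the BM from its interior,'' but this identification is \emph{not} part of the lemma and in fact fails in the transient regime---deciding when $T(\infty)=\tau$ is exactly the content of Theorem~\ref{thm:dp} and the discussion immediately following the lemma. For the lemma itself one only needs $T(\infty)\le\tau$, i.e.\ the Brownian pair cannot run beyond $E$, and your absorption argument delivers precisely this: were the Brownian motion to reach $E$ at some $\tau'<T(\infty)$, then $(M^1,M^2)$ would hit $E$ at $T^{-1}(\tau')$ and $T$ would plateau there, contradicting $\tau'<T(\infty)$. So the argument is correct; only the stated goal is slightly too strong.
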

	\begin{proof}[Sketch of Proof]
		The claim follows directly from Lemma \ref{lem:3} and the Dubins-Schwartz theorem (see for instance Theorem 3.4.6 of \cite{KS98}) applied to both total-mass processes separately. The correlation is directly inherited from the driving noises.
	\end{proof}
	
	With the lemma in hand, let us emphasize the idea behind Theorem \ref{thm:dp}.
	The pair of total-masses is a time-changed planar Brownian motion that stops once it hits $E$. Hence, in order to prove the 		theorem one has to find a characterization under which the time-change $T^{-1}$ levels off 		before the planar Brownian motion hits $E$, i.e. one has to show
	\begin{align*}
		\langle u_\infty,\1\rangle\langle v_\infty,\1\rangle=W^1_\tau W^2_\tau=0\quad &\Longleftrightarrow\quad  T(\infty)=\gamma \int_0^\infty \langle u_s,v_s\rangle\,ds=\tau\quad \Longleftrightarrow \quad d=1,2.
	\end{align*}
	Morally, in the transient case islands carrying only type $u$ and islands carrying only type $v$ can move away from each other so that $\langle u_s,v_s\rangle$ is small even though $\langle u_s,\1\rangle, \langle v_s,\1\rangle$ 	are not. Such scenarios might cause the time-change to stop increasing before one of the types vanished.
	\smallskip
	
\begin{remark}
\label{rem:2807_1}
In fact, the argument of Dawson/Perkins  
 proves more, also for $\varrho\in(-1,1)$, than stated in Theorem \ref{thm:dp}. For $d=1,2$ and
 $\varrho\in(-1,1)$, the limit $(\langle u_\infty,\1\rangle,\langle v_\infty,\1\rangle)$ is distributed according to the law $Q^\varrho_{\langle u_0,\1\rangle,\langle v_0,\1\rangle}$ of  $(W^1_\tau, W^2_\tau)$ (see \cite{BDE11}) and this will be the crucial ingredient of the proof of the next theorem.
\end{remark}

\subsubsection{Weak Longtime Analysis for Infinite Initial Conditions in dimensions $d=1,2$}  
	
	An important observation of Dawson/Perkins in their study of $\textrm{MCB}_\gamma$ was the use of the self-duality to deduce from Theorem \ref{thm:dp} also the longtime behavior for the system started at infinite initial 	conditions. 	This task is often much more complicated than in the case of summable initial conditions 
	 since simple martingale arguments fail. The extension of their approach to $\varrho\neq 0$ leads to the next result for the recurrent regime (see Proposition 2.1 of \cite{BDE11}). The Brownian motions $W^1,W^2$ and the stopping 	time $\tau$ are as in (\ref{exitd}) and (\ref{exitt}).
	\begin{thm}[Infinite Initial Conditions]\label{prop:convlaw}
		Suppose that $d\leq 2$, $\varrho\in (-1,1)$ and $u_0=\bu, v_0=\bv$, then
		\begin{align*}
			\mathcal L^{\bu,\bv}  (u_t,v_t)\stackrel{t\to\infty}{\Longrightarrow} \mathcal L^{u,v} (\bar W^1_\tau,\bar W^2_\tau).
		\end{align*}
		Here, $(\bar W^1_{\tau},\bar W^2_{\tau})$ denotes the pair of functions with constant values $( W^1_{\tau},W^2_{\tau})$.
	\end{thm}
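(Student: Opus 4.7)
The plan is to exploit the self-duality (\ref{la:selfdual}) to reduce weak convergence of $(u_t,v_t)$ started from the infinite configuration $(\mathbf{u},\mathbf{v})$ to a statement about the total-mass processes of a dual $\mathrm{SBM}_\gamma$ started at a compactly supported initial condition, which is controlled by Lemma~\ref{lem:3} and Remark~\ref{rem:2807_1}. Fix an arbitrary compactly supported test pair $(\tilde u_0,\tilde v_0)\in L^2_\beta$ and let $(\tilde u_t,\tilde v_t)$ be the corresponding solution. The self-duality reads
\[
\E^{\mathbf{u},\mathbf{v}}\!\left[\exp\langle\langle u_t, v_t,\tilde u_0,\tilde v_0\rangle\rangle_\varrho\right] = \E^{\tilde u_0,\tilde v_0}\!\left[\exp\langle\langle \mathbf{u},\mathbf{v},\tilde u_t,\tilde v_t\rangle\rangle_\varrho\right],
\]
and because $u_0,v_0$ are constant in space, expanding (\ref{sd}) collapses the right-hand exponent to the linear combination of the two total masses
\[
-\sqrt{1-\varrho}\,(u+v)\,\langle \tilde u_t+\tilde v_t,\1\rangle + i\sqrt{1+\varrho}\,(u-v)\,\langle \tilde u_t-\tilde v_t,\1\rangle .
\]

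For the $t\to\infty$ limit on the right, Lemma~\ref{lem:3} tells us that the total masses $\langle \tilde u_t,\1\rangle,\langle \tilde v_t,\1\rangle$ are non-negative square-integrable martingales and so converge almost surely. Remark~\ref{rem:2807_1} identifies their joint limit in $d\leq 2$, $\varrho\in(-1,1)$ as the exit position $(W^1_\tau,W^2_\tau)$ of a $\varrho$-correlated planar Brownian motion started at $(a,b)=(\langle \tilde u_0,\1\rangle,\langle \tilde v_0,\1\rangle)$. Since the exponent has non-positive real part, the integrand is uniformly bounded and dominated convergence yields
\[
\lim_{t\to\infty} \E^{\mathbf{u},\mathbf{v}}\!\left[\exp\langle\langle u_t,v_t,\tilde u_0,\tilde v_0\rangle\rangle_\varrho\right] = \E^{Q^\varrho_{a,b}}\!\left[\exp\!\left(-\sqrt{1-\varrho}(u+v)(W^1_\tau+W^2_\tau) + i\sqrt{1+\varrho}(u-v)(W^1_\tau-W^2_\tau)\right)\right].
\]

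The remaining key step is to identify this limit with the Laplace--Fourier transform of the candidate limit $(\bar W^1_\tau,\bar W^2_\tau)$ tested against $(\tilde u_0,\tilde v_0)$, i.e.\ with $\E\bigl[\exp\langle\langle \bar W^1_\tau,\bar W^2_\tau,\tilde u_0,\tilde v_0\rangle\rangle_\varrho\bigr]$ for $(W^1,W^2)$ started at $(u,v)$. Expanding (\ref{sd}), this reduces to the symmetry
\[
\E^{(a,b)}\!\left[e^{-\sqrt{1-\varrho}(u+v)(W^1_\tau+W^2_\tau)+i\sqrt{1+\varrho}(u-v)(W^1_\tau-W^2_\tau)}\right] = \E^{(u,v)}\!\left[e^{-\sqrt{1-\varrho}(a+b)(W^1_\tau+W^2_\tau)+i\sqrt{1+\varrho}(a-b)(W^1_\tau-W^2_\tau)}\right],
\]
a self-duality for the exit law $Q^\varrho$ of planar $\varrho$-correlated Brownian motion from the first quadrant. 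I would derive it by specialising the $\mathrm{SBM}_\gamma$ self-duality to the case with no migration ($\mathcal{A}\equiv 0$) and a single active site: by the Dubins--Schwartz argument of Lemma~\ref{la:eds} each coordinate is then exactly a time change of planar $\varrho$-correlated Brownian motion stopped on $E$, and letting $t\to\infty$ in that no-migration self-duality produces precisely the required identity.

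Granted the symmetry, for every compactly supported $(\tilde u_0,\tilde v_0)$ the joint Laplace--Fourier transform of $(u_t+v_t,u_t-v_t)$ paired with $(\tilde u_0+\tilde v_0,\tilde u_0-\tilde v_0)$ converges to that of the constant pair $(\bar W^1_\tau+\bar W^2_\tau,\bar W^1_\tau-\bar W^2_\tau)$. Exactly as in the uniqueness argument of Lemma~\ref{uniq}, the real direction determines the finite-dimensional laws of $u_t+v_t$ and the imaginary direction those of $u_t-v_t$; summing and differencing yields finite-dimensional convergence of $(u_t,v_t)$, which together with a standard tightness bound in $L^2_\beta$ (available from the moment estimates used for Theorem~\ref{ex:dis}) upgrades to the stated weak convergence. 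The main obstacle is establishing the $Q^\varrho$ self-duality cleanly; routing it through the $\mathcal{A}\equiv 0$ reduction turns it into a corollary of tools already developed, but one must still justify the passage to $t\to\infty$ inside that self-duality and the restriction to a single site.
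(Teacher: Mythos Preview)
Your proposal is correct and follows essentially the same route as the paper: apply self-duality with a compactly supported test pair, collapse the right-hand side to a function of the dual total masses, pass to the limit using their almost sure convergence to $(W^1_\tau,W^2_\tau)$ (Remark~\ref{rem:2807_1}) and dominated convergence, and then invoke the exit-law self-duality for $\varrho$-correlated planar Brownian motion---which the paper likewise suggests deriving from the non-spatial $\mathrm{SBM}_\gamma$ self-duality (your $\mathcal A\equiv 0$, single-site reduction). The only minor difference is that you add an explicit tightness step at the end, whereas the paper treats convergence of the mixed transform over all compactly supported $(\phi,\psi)$ as directly determining the weak limit.
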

	\begin{proof}[Sketch of Proof]
		Due to the arguments in the uniqueness proof of Corollary \ref{uniq}, it comes as no suprise that 
		\begin{align*}
			&\quad \mathcal L^{\bu,\bv} (u_t,v_t) \text{ converges weakly to }\mathcal L^{u,v} (\bar W^1_{\tau},\bar W^2_{\tau})\\
			&\Longleftrightarrow \quad\lim_{t\to\infty}\E^{\bu,\bv}[\exp(\langle\langle u_t,v_t,\phi,\psi\rangle\rangle_\varrho)]=\E^{u,v}[\exp(\langle\langle \bar W^1_\tau,\bar W^2_\tau,\phi,\psi\rangle\rangle_\varrho)],\quad \forall\text{ compactly supported } \phi,\psi.
		\end{align*}
		The argument is the same: convergence of the Laplace transform determines weak convergence of the sum $u_t+v_t$ and convergence of the Fourier transform determines convergence of the difference $u_t-v_t$. Taking 		sums and differences, convergence of the pair $(u_t,v_t)$ follows. Employing the self-duality, one obtains by dominated convergence 
		\begin{align*}
			&\quad\lim_{t\to\infty}\E^{\bu,\bv}[\exp(\langle\langle u_t,v_t,\phi,\psi\rangle\rangle_\varrho)]\\
			&=\lim_{t\to\infty}\E^{\phi,\psi}[\exp(\langle\langle \bu,\bv,\tilde u_t,\tilde v_t\rangle\rangle_\varrho)]\\
			&=\E^{\phi,\psi}\left[\exp\left(-\sqrt{1-\varrho}(u+v)\lim_{t\to\infty}\langle \1,\tilde u_t+\tilde v_t\rangle+i\sqrt{1+\varrho}(u-v)\lim_{t\to\infty}\langle \1,\tilde u_t-\tilde  v_t\rangle\right)\right]
		\end{align*}
		so that the almost sure convergence of the total-mass processes $(\langle \1,\tilde u_t\rangle, \langle \1,\tilde v_t\rangle)\to (\langle \1,\tilde u_\infty\rangle, \langle \1,\tilde v_\infty\rangle)\stackrel{d}{=}(W^1_\tau, W^2_\tau)$ (see Remark~\ref{rem:2807_1})
gives equality to
		\begin{align*}
			E^{\langle \phi,\1\rangle,\langle \psi,\1\rangle}\left[\exp\left(-\sqrt{1-\varrho}(u+v)(W^1_\tau+W^2_\tau)+i\sqrt{1+\varrho}(u-v)( W^1_\tau-W^2_\tau)\right)\right].
		\end{align*}
 		Finally, it remains to show the identity
		\begin{align*}
			&\quad	E^{\langle \phi,\1\rangle,\langle \psi,\1\rangle}\left[\exp\left(-\sqrt{1-\varrho}(u+v)(W^1_\tau+W^2_\tau)+i\sqrt{1+\varrho}(u-v)( W^1_\tau-W^2_\tau)\right)\right]\\
			&=E^{u,v}[\exp(\langle\langle \bar W^1_\tau,\bar W^2_\tau,\phi,\psi\rangle\rangle_\varrho)]
		\end{align*}
		for $\varrho$-correlated Brownian motions which can either be done via stochastic calculus (for $\varrho=0$ see page 1111 of \cite{DP98}) or by considering the self-duality for the simplest symbiotic branching model
		\begin{align*}
		\begin{cases}
			du_t=\sqrt{\gamma u_tv_t}dB^2_t\\
			dv_t=\sqrt{\gamma u_tv_t}dB^2_t
		\end{cases}
		\end{align*}
		(see the proof of Proposition 4.4 of \cite{BDE11}).
	\end{proof}
	
	The need for the assumption $d\leq 2$ becomes apparent in the proof: due to the self-duality, the convergence claimed in the theorem is equivalent to convergence of laws of the total-mass processes to $Q^\varrho$. This  
occurs precisely in dimensions  $d=1,2$ (see Remark~\ref{rem:2807_1}).	
	\medskip
	
	The most striking feature of the extension in Theorem \ref{prop:convlaw} from $\varrho=0$ to $\varrho\in (-1,1)$ is a unifying property for $\textrm{SBM}_\gamma$ in the recurrent regime based on the unifying property 			presented in the Section \ref{sec:inter}. The theorem is restricted to $\varrho\in (-1,1)$ 
which is partly caused by the use of the self-duality in the proof. However, comparing Theorem~\ref{prop:convlaw} with well-known results for the boundary cases $\varrho=-1,1$, in dimensions $d=1,2$, it turned out that the classical results can be reformulated in a unified language via $\varrho$-correlated Brownian motions.
	\smallskip
	
	First, suppose $w_t$ is a solution of the stepping stone model (see Example \ref{ex1}), 
 in dimension $d\leq 2$,  and $w_0\equiv w \in [0,1]$. It was proved in \cite{S80} that
\begin{eqnarray}
\label{eq:shigalimit}
	\mathcal{L}^{\bf w}(w_t)\stackrel{t\rightarrow \infty}{\Longrightarrow} w \delta_{\1}
         +(1-w) \delta_{\bf 0},
\end{eqnarray}
where $\delta_{\1}$ (resp.~$\delta_{\bf 0}$) denotes the Dirac distribution concentrated on the constant function $\1$ (resp. $\bf 0$). This can be reformulated in terms of perfectly anti-correlated Brownian motions $(W^1, W^2)$: For $\varrho=-1$, the pair $(W^1, W^2)$ takes values only on the straight-line connecting $(0,1)$ and $(1,0)$, and stops at the boundaries. Hence, the law of $(W^1_{\tau}, W^2_{\tau})$ is a mixture of $\delta_{(0,1)}$ and $\delta_{(1,0)}$ and the probability of hitting $(1,0)$ is equal to the probability of a one-dimensional Brownian motion started at $w \in [0,1]$ hitting $1$ before $0$, which is $w$, and hence it matches (\ref{eq:shigalimit}). \\
Secondly, let $w_t$ be a solution of the parabolic Anderson model with Brownian potential (see Example \ref{pam}), in dimension $d\leq 2$, and constant initial condition $w_0 \equiv w \geq 0$. In \cite{r7} it was shown that 
\begin{eqnarray*}
	\mathcal{L}^{\bf w}(w_t)\stackrel{t\rightarrow \infty}{\Longrightarrow} \delta_{\bf 0}.
\end{eqnarray*}  
As discussed above, if the Anderson model is viewed as a symbiotic branching process with $\varrho=1$, this implies
\begin{eqnarray*}
	\mathcal{L}^{\bf w, {\bf w}}(u_t,v_t)\stackrel{t\rightarrow \infty}{\Longrightarrow} \delta_{\bf 0, 0}.
\end{eqnarray*}
From the viewpoint of two perfectly positive-correlated Brownian motions in Theorem \ref{prop:convlaw} we obtain the same result since they simply move on the diagonal dissecting the upper right quadrant until they eventually get absorbed at the origin, i.e. $(W^1_\tau, W^2_\tau)=(0,0)$ almost surely.

\subsubsection{Almost Sure Longtime Analysis for $d=1,2$}

	Started in homogeneous initial conditions, Theorem \ref{thm:dp} states that for $d=1,2$, $(u_t,v_t)$ converges weakly to a law under which one type completely disappears. It is natural to ask whether this convergence also 		holds pathwise. The negative answer is the following result taken from \cite{BDE11} which is based upon the general strategy of \cite{CK00}.
	
	\begin{thm}[Almost Sure Non-Convergence]\label{prop:pb}
		Suppose $(u_t,v_t)$ solves $\textrm{SBM}_\gamma$ in dimensions $d=1,2$ for $\varrho\in (-1,1)$ with $u_0=\bu, v_0=\bv$. Then
		\begin{eqnarray*}
			\P \left[ \liminf_{t\rightarrow \infty} \sup_{k\in K} \left|\left(u_t(k) \atop v_t(k)\right)-\left(u' \atop v'\right)\right|=0 \right] =1
		\end{eqnarray*}
		for all $(u',v')\in E$ and $K \subset \Z^d$ bounded.
	\end{thm}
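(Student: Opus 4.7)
The plan is to follow the general strategy of \cite{CK00} as adapted in \cite{BDE11}: combine the weak convergence of Theorem \ref{prop:convlaw} with a Markov-type argument to turn ``positive probability at some large time'' into ``infinitely often almost surely,'' for a countable family of events dense enough to yield the full $\liminf$ statement.

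First I would reduce the claim to countably many events. Fix a countable dense set $D \subset E$ and let $\epsilon$ range over positive rationals; it suffices to show that for every $(u',v')\in D$, every rational $\epsilon>0$ and every bounded $K\subset\Z^d$, the event
\begin{align*}
\bigcap_{N\ge 1}\Big\{\exists\,t\ge N:\sup_{k\in K}\Big|\tbinom{u_t(k)}{v_t(k)}-\tbinom{u'}{v'}\Big|<\epsilon\Big\}
\end{align*}
has probability one. The theorem then follows by countable intersection and a diagonal argument. Denote the open set in question by $B=B_{(u',v'),\epsilon,K}\subset L^2_\beta$.

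The next step is to transfer the weak convergence into a ``one-time visit'' lower bound. By Theorem \ref{prop:convlaw}, $\mathcal{L}^{\bu,\bv}(u_t,v_t)\Rightarrow \mathcal{L}^{u,v}(\bar W^1_\tau,\bar W^2_\tau)$ as $t\to\infty$, and for $\varrho\in(-1,1)$ the exit law $Q^\varrho_{u,v}$ of a non-degenerate $\varrho$-correlated planar Brownian motion has full (diffuse) support on $E$. In particular $\P^{u,v}[(W^1_\tau,W^2_\tau)\in O]>0$ for any open neighbourhood $O\subset E$ of $(u',v')$. Since $B$ is open in the product topology and contains an open neighbourhood of the constant configuration $(\overline{u'},\overline{v'})$, the portmanteau theorem yields $\liminf_{t\to\infty}\P[(u_t,v_t)\in B]\ge \delta$ for some $\delta=\delta(\epsilon,K,(u',v'))>0$.

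Finally, I would boost this into ``infinitely often'' via the Markov property. Pick an unbounded sequence $t_n\uparrow\infty$ and apply the Markov property at $t_n$: one needs to bound from below, uniformly in a relevant set of starting configurations, the probability that the process enters $B$ within some fixed time window. The cleanest way is to repeat the argument of Step 2 for the \emph{shifted} process: for any $s\ge 0$, the law of $(u_{s+t},v_{s+t})$ conditional on $\mathcal F_s$ is that of $\mathrm{SBM}_\gamma$ started at $(u_s,v_s)$, and one invokes an $L^2_\beta$-tightness estimate (which follows from the second-moment bounds used in Theorem \ref{ex:dis}) together with a version of the self-duality argument of Theorem \ref{prop:convlaw} to conclude that for \emph{any} $\mathcal F_s$-measurable starting point, the conditional probability of hitting $B$ within a further time $T$ is at least some $\delta'>0$ independent of $s$. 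A standard Borel--Cantelli / strong-Markov iteration then forces $B$ to be visited at arbitrarily large times almost surely.

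The main obstacle is the uniformity in Step 4: Theorem \ref{prop:convlaw} gives the weak limit only for \emph{homogeneous} deterministic initial data, while the strong Markov argument requires a lower bound on the ``one-time visit'' probability that does not degenerate as the starting configuration $(u_s,v_s)$ varies over the (random) states reached by $\mathrm{SBM}_\gamma$. Overcoming this requires either a continuity/stability estimate for the weak limit with respect to initial conditions (using the self-duality identity \eqref{la:selfdual}, whose complex exponential duality function is Lipschitz in the initial data), or a direct comparison with the homogeneous case via monotonicity/moment arguments—essentially the content of the Cox--Klenke technique carried out in \cite{BDE11}.
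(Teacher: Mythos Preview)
The paper does not supply a proof here; the theorem is simply quoted from \cite{BDE11}, which in turn adapts the general scheme of \cite{CK00}. Your outline is the correct blueprint for that scheme: reduce to countably many open neighbourhoods, use that the exit law $Q^\varrho_{u,v}$ has full support on $E$ for $\varrho\in(-1,1)$ together with Theorem~\ref{prop:convlaw} to obtain a positive one-time visiting probability, and then upgrade to ``infinitely often'' via the Markov property.

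You are also right that the uniformity in the Markov iteration is the heart of the matter, and that Theorem~\ref{prop:convlaw} as stated covers only constant initial data. However, your proposed fix---a Lipschitz/stability estimate for the weak limit in the initial condition, or a uniform conditional lower bound over all reachable states---is not how \cite{CK00} actually proceed, and would be hard to carry out directly. Their argument is by contradiction: if the event $A=\{(u_t,v_t)\notin B\text{ for all large }t\}$ had positive probability, the bounded martingale $\P(A\mid\mathcal F_t)$ would converge almost surely to $\mathbf 1_A$, so on $A$ the conditional probability $\P^{(u_t,v_t)}[\text{never visit }B]$ would tend to $1$. This is then shown to be incompatible with the weak convergence already established from the \emph{fixed} homogeneous start, via a Feller-type continuity and tightness argument, rather than by re-proving convergence from every possible state. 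So your diagnosis of the obstacle is accurate, but the resolution in \cite{CK00}/\cite{BDE11} goes through a martingale contradiction rather than the uniform lower bound you aim for in your Step~4.
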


	Contrasting the weak convergence to $\mathcal L \big(\bar W^1_\tau, \bar W^2_\tau\big)$ of the previous section, the almost sure behavior is entirely different: In contrast to choosing,  according  to the exit-law $Q^{\varrho}$,  \textbf{one} point $(u',v')\in E$ as a	limit 	point, the process locally approaches \textbf{every} possible $(u',v')\in E$ infinitely often. Hence, looking at a fixed box $K$, the dominant type changes infinitely often and both types approach arbitrarily high values.
\smallskip

		Theorem \ref{prop:pb} is consistent with the known results for the boundary cases even though they appear to be very different. Looking inside the proofs of \cite{CK00},
one realizes that, in fact, they proved more than we stated here. Each point of the 		support of the limit measure $\big(\bar W^1_\tau,\bar W^2_\tau\big)$ is an accumulation point for $(u_t,v_t)$. Plugging this into the limit measures of the boundary cases discussed below Theorem \ref{prop:convlaw}, the theorem extends smoothly to $\varrho=-1$: The support of $u\delta_{\1}+(1-u)\delta_{\mathbf{0}}$ only contains $\1$ and $\mathbf{0}$ leading to the classical result that solutions to Example \ref{ex1} alternate locally between $0$ and $1$ (see Theorem 2 of \cite{CK00}). \\
		Interestingly, the well known almost sure convergence to $\mathbf 0$ for
the Anderson model from Example \ref{pam} (see for instance \cite{GdH07}) does not contradict the findings here: the weak limit law is $\delta_{\mathbf{0}}$ so that the only point of the 		support is $\mathbf{0}$. Hence, the techniques of \cite{CK00} only  show that solutions locally approach $0$ infinitely often which is much weaker than the well known exponentially fast almost sure convergence to 0.

 \subsubsection{Longtime Analysis for the Moments - The Critical Moment Curve}
		
	Here we follow the arguments of \cite{BDE11} to find bounds for $p$th moments of $\textrm{SBM}_\gamma$ started at homogeneous initial conditions and for $p$th moments of the total-masses for 	compactly 		supported initial conditions. The calculations appearing here are the main building blocks for $\textrm{SBM}_\infty$ so that this section is more elaborate.
	\smallskip
	
	We start with a simple self-duality based lemma connecting the moments of solutions with infinite initial conditions with the moments of the total-mass processes for solutions starting at finite initial conditions.
	
	\begin{lem}\label{lem:11}
		For any $k\in \mathbb{Z}^d$ and $t\geq 0$
		\begin{align}\label{23}
			\E^{\1,\1} \big[ (u_t(k)+v_t(k))^p \big] = \E^{{\bf 1}_k,{\bf 1}_k} \big[ (\langle \1, {u}_t \rangle + \langle \1, {v}_t \rangle)^p \big].
		\end{align}
	\end{lem}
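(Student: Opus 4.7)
The strategy is to apply the self-duality (\ref{la:selfdual}) with carefully chosen initial conditions for both solutions, combined with the linear scaling property of $\textrm{SBM}_\gamma$, to reduce the identity to an equality of Laplace transforms.

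Concretely, I would take $(u_0,v_0)=(\1,\1)$ on the left and $(\tilde u_0,\tilde v_0)=(\theta\mathbf{1}_k,\theta\mathbf{1}_k)$ on the right, for an arbitrary $\theta>0$ and fixed $k\in\Z^d$. Both pairs have equal first and second components, so in the duality functional $\langle\langle \cdot,\cdot,\cdot,\cdot\rangle\rangle_\varrho$ from (\ref{sd}) the imaginary part vanishes identically in both appearances. The real parts collapse cleanly:
\begin{align*}
\langle\langle u_t,v_t,\theta\mathbf{1}_k,\theta\mathbf{1}_k\rangle\rangle_\varrho&=-2\theta\sqrt{1-\varrho}\,(u_t(k)+v_t(k)),\\
\langle\langle \1,\1,\tilde u_t,\tilde v_t\rangle\rangle_\varrho&=-2\sqrt{1-\varrho}\,(\langle\1,\tilde u_t\rangle+\langle\1,\tilde v_t\rangle).
\end{align*}
The self-duality then gives the identity of Laplace transforms at parameter $2\theta\sqrt{1-\varrho}$, with the $\theta$ appearing only on the left.

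To move $\theta$ to the right hand side, I would invoke the elementary linear scaling property of $\textrm{SBM}_\gamma$: if $(u',v')$ solves the system (\ref{ss}) with initial condition $(\mathbf{1}_k,\mathbf{1}_k)$, then $(\theta u',\theta v')$ solves (\ref{ss}) with initial condition $(\theta\mathbf{1}_k,\theta\mathbf{1}_k)$, since both the Laplacian and the square-root diffusion coefficient $\sqrt{\gamma uv}$ are homogeneous of degree one in $(u,v)$. Weak uniqueness (Lemma \ref{uniq}) then yields $(\tilde u_t,\tilde v_t)\stackrel{d}{=}\theta(u'_t,v'_t)$. Substituting this into the right hand side above produces, for every $\theta>0$,
\begin{align*}
\E^{\1,\1}\bigl[\exp\bigl(-2\theta\sqrt{1-\varrho}(u_t(k)+v_t(k))\bigr)\bigr]
=\E^{\mathbf{1}_k,\mathbf{1}_k}\bigl[\exp\bigl(-2\theta\sqrt{1-\varrho}(\langle\1,u_t\rangle+\langle\1,v_t\rangle)\bigr)\bigr].
\end{align*}
Since $\sqrt{1-\varrho}>0$, this is equality of Laplace transforms of the two nonnegative random variables $u_t(k)+v_t(k)$ under $\P^{\1,\1}$ and $\langle\1,u_t\rangle+\langle\1,v_t\rangle$ under $\P^{\mathbf{1}_k,\mathbf{1}_k}$ at all positive arguments. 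Hence the two distributions coincide, and their $p$th moments agree (with both sides simultaneously finite or infinite), which is (\ref{23}).

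The only genuine subtlety is making sure the self-duality applies. The initial condition $(\1,\1)$ lies in $L^2_\beta$ thanks to $\sum_i\beta(i)<\infty$, and $(\theta\mathbf{1}_k,\theta\mathbf{1}_k)$ is compactly supported, so the hypotheses of (\ref{la:selfdual}) are met. The rest is bookkeeping with the vanishing of the imaginary part and the scaling rearrangement.
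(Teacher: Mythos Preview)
Your proof is correct and follows essentially the same route as the paper. Both arguments apply the self-duality with dual initial conditions proportional to $\mathbf{1}_k$ in each component (so the imaginary part of $\langle\langle\cdot,\cdot,\cdot,\cdot\rangle\rangle_\varrho$ drops out), and then use the linear scaling $(u,v)\mapsto(\theta u,\theta v)$ together with weak uniqueness to transfer the parameter $\theta$; the paper simply takes $\phi=\psi=\tfrac{\theta}{2}\mathbf{1}_k$ instead of $\theta\mathbf{1}_k$ and summarizes the scaling step as ``the final equality comes from the uniqueness of the solutions.''
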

	\begin{proof}
		Employing the self-duality with $\phi=\psi=\frac{\theta}{2}{\bf 1}_{k}$ gives the Laplace transform identity
		\begin{align*}
			\E^{\1,\1} \big[ e^{-\sqrt{1-\varrho} \theta(u_t(k)+v_t(k))}\big]
			&=\E^{\1,\1}\big[e^{-\sqrt{1-\varrho}\langle u_{t}+v_{t},\phi+\psi\rangle}\big]\\
			&=\E^{\frac{\theta}{2}\1_{k},\frac{\theta}{2}\1_{k}}\big[e^{-\sqrt{1-\varrho}\langle \1+\1,{u}_{t}+{v}_{t}\rangle}\big]\\
			&=\E^{\1_{k},\1_{k}}\big[e^{-\sqrt{1-\varrho}\theta \langle \1, {u}_{t}+{v}_{t}\rangle}\big],
		\end{align*}
		where the final equality comes from the uniqueness of the solutions. 
	\end{proof}
	
	With identity (\ref{23}) in hand, to understand the behavior of the moments of the solutions starting at homogeneous initial conditions, it suffices to understand the behavior of the moments for the total-masses for which we already found a useful structure in Lemma 	\ref{la:eds}.
	
	A crucial ingredient is an exit-point exit-time equivalence for $\varrho$-correlated Brownian motions $(W^1,W^2)$ and their exit time $\tau$ from the first quadrant $\mathbb{Q}$ as defined in (\ref{exitd}), (\ref{exitt}).
	\begin{lem}[Exit-Point Exit-Time Equivalence]\label{ete}
		Let $p > 0$ and $u, v > 0$, then the following conditions are equivalent:
		\begin{itemize}
			\item[i)] $$p<p(\varrho):=\frac{\pi}{\frac{\pi}{2}+\arctan\big(\frac{\varrho}{\sqrt{1-\varrho^2}}\big)},$$ 
			\item[ii)] $$E^{u,v}\big[\tau^{\frac{p}{2}}\big]<\infty,$$
	 		\item[iii)] $$E^{u,v}\big[\big|(W^1_{\tau},W^2_{\tau})\big|^p\big]<\infty.$$
		\end{itemize}
	\end{lem}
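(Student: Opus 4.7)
The plan is to reduce the lemma to classical facts about planar Brownian motion exiting a cone, via a linear change of coordinates. Let $(\tilde B^1, \tilde B^2)$ be a standard planar Brownian motion started at $(u, (v-\varrho u)/\sqrt{1-\varrho^2})$ and set
$$\begin{pmatrix} W^1_t \\ W^2_t \end{pmatrix} = M \begin{pmatrix} \tilde B^1_t \\ \tilde B^2_t \end{pmatrix}, \qquad M = \begin{pmatrix} 1 & 0 \\ \varrho & \sqrt{1-\varrho^2} \end{pmatrix}.$$
A direct computation shows that $(W^1, W^2)$ has the law of a pair of $\varrho$-correlated Brownian motions starting at $(u,v)$, and the first quadrant $\mathbb{Q}$ pulls back under $M$ to the open wedge $D_\varrho = \{(b^1,b^2) : b^1 > 0,\ \varrho b^1 + \sqrt{1-\varrho^2}\, b^2 > 0\}$. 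Writing $\tilde B$ in polar coordinates shows that $D_\varrho$ has opening angle
$$\alpha(\varrho) = \frac{\pi}{2} + \arctan\left(\frac{\varrho}{\sqrt{1-\varrho^2}}\right),$$
so that $p(\varrho) = \pi/\alpha(\varrho)$. Moreover, the exit time $\tau$ of $W$ from $\mathbb{Q}$ equals the exit time $\tilde\tau$ of $\tilde B$ from $D_\varrho$, and $W_\tau = M \tilde B_{\tilde\tau}$.

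With this reduction, (i) $\Longleftrightarrow$ (ii) is the classical Spitzer--Burkholder theorem for exit times of planar Brownian motion from a cone: for a standard planar Brownian motion started at an interior point of a wedge of opening $\alpha$,
$$\E\bigl[\tilde\tau^{\,q}\bigr] < \infty \quad \Longleftrightarrow \quad q < \frac{\pi}{2\alpha}.$$
Taking $q = p/2$ and $\alpha = \alpha(\varrho)$ gives precisely $p < p(\varrho)$. The starting point $M^{-1}(u,v) = (u,(v-\varrho u)/\sqrt{1-\varrho^2})$ is in the interior of $D_\varrho$ whenever $u, v > 0$, so the hypothesis of the classical result is satisfied.

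For (ii) $\Longleftrightarrow$ (iii) I would exploit the homogeneity of the wedge. The conformal map $\zeta \mapsto \zeta^{\pi/\alpha(\varrho)}$ sends $D_\varrho$ onto the upper half-plane, so after a time change $\tilde B$ becomes a Brownian motion in the half-plane whose exit distribution on $\R$ is Cauchy. Consequently $|\tilde B_{\tilde\tau}|^{p(\varrho)}$ has Cauchy-type tails, and hence $\E[\,|\tilde B_{\tilde\tau}|^p\,] < \infty$ iff $p < p(\varrho)$. Alternatively one can use Burkholder's direct moment comparison $\E[\tilde\tau^{p/2}] \asymp \E[\,|\tilde B_{\tilde\tau}|^p\,]$ for conical domains, which follows from Brownian scaling and the scale invariance of $D_\varrho$ (the exit from a ball of radius $R$ centered at the apex takes time of order $R^2$). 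Since $M$ is a linear isomorphism, $|W_\tau|$ and $|\tilde B_{\tilde\tau}|$ have comparable $L^p$ norms, so the equivalence transfers.

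The main obstacle I anticipate is the bookkeeping in the polar-coordinate identification of $D_\varrho$ and the precise matching of the classical Spitzer--Burkholder exponent $\pi/(2\alpha)$ with $p(\varrho)/2$ in our parametrization. Once the geometric identification is in place, all three conditions reduce to the single inequality $p < p(\varrho)$, and a uniformity in the initial point $(u,v)$ (needed later for Theorem \ref{0}) follows from Brownian scaling within the cone.
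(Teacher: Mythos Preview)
Your proof is correct and follows essentially the same route as the paper: the linear transformation $(\tilde W^1,\tilde W^2)=(W^1,(W^2-\varrho W^1)/\sqrt{1-\varrho^2})$ reduces $\varrho$-correlated Brownian motions in the quadrant to standard planar Brownian motion in a wedge of angle $\theta=\frac{\pi}{2}+\arctan\bigl(\varrho/\sqrt{1-\varrho^2}\bigr)$, after which the conformal map $z\mapsto z^{\pi/\theta}$ to the upper half-plane and the Cauchy exit law give both the exit-time and exit-point moment thresholds. The only difference is that the paper, following \cite{BDE11}, also records the explicit density of $Q^\varrho$ (equations (\ref{h1})--(\ref{h3})), which is not needed for this lemma but is used later to define the jump measure $\nu^\varrho$.
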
	
	The function $p$ is plotted in Figure 1; the critical curve $p$ is strictly decreasing, with $p(-1)=\infty$, $p(0)=2$ and $p(1)=1$.	
	 The particular case of $\varrho=0$ corresponds to planar Brownian motion in $\mathbb{Q}$. 
	  It is a classical result of Feller (see \cite{Fe}) that, independently of the initial 	value, the first hitting-time of the boundary only has $1-\epsilon$ finite moments. This part of the theorem is non-trivial! The second part is simpler as the exit point distribution can be calculated explicitly: with
	\begin{align*}
		\mathbb{Q}\rightarrow \mathbb{H},\,z\mapsto z^{2}
	\end{align*}
	the first quadrant is mapped conformally to the upper half plane $\mathbb{H}\subset \R^2$ so that, by conformal invariance, the planar Brownian path in $\mathbb{Q}$ is mapped to a time-changed planar Brownian 	path in 
	$\mathbb{H}$. 	Luckily, the time-change does not influence the exit-points (only the exit-time) and the exit-distribution from $\mathbb{H}$ is known to be Cauchy. Plugging this into the conformal mapping, the density of the exit-law $Q^0$ from 
	$\mathbb{Q}$ can be calculated (see page 1094 of \cite{DP98}). The density has no pole at zero with tail decreasing polynomially so that the number of finite moments can be deduced.
	
	\smallskip
	For $\varrho$-correlated Brownian motions the result follows from a simple change of the space; via 
	\begin{align*}
		(\tilde W^1,\tilde W^2):=\bigg(W^1,\frac{W^2-\varrho W^1}{\sqrt{1-\varrho^2}}\bigg),
	\end{align*}
	the $\varrho$-correlated Brownian motions are transformed into independent Brownian motions. Simultaneously, the quadrant $\mathbb{Q}$ is transformed into a wedge $\mathbb{W}(\theta)$ of angle 
	\begin{align*}
		\theta:=\frac{\pi}{2}+\arctan\Big(\frac{\varrho}{\sqrt{1-\varrho^2}}\Big)
	\end{align*}
	using the conformal map
	\begin{align*}
		\mathbb{Q}\rightarrow \mathbb{W}(\theta),\,z\mapsto z^{\pi/\theta}.
	\end{align*}	
	The angle of $\mathbb{W}(\theta)$ increases for increasing $\varrho$ explaining, at least morally, the decrease of the number of finite moments: if the domain is enlarged, the duration of a planar Brownian path to hit the boundary increases, hence, the exit-time might have less finite moments. At the same time, if 	the Brownian paths run for longer time it will hit larger values so that the hitting-point distribution might have less finite moments. Making this rigorous and calculating the exact number of finite moments is done in the same manner as for $\varrho=0$. That is, the exact distribution of the exit-law $Q^\varrho$ can be found:
	\begin{align}\label{h1}\begin{split}
		 		P^{u,v} \big( W^1_{\tau}=0,W^2_{\tau}\in dr) 
          			&=\frac{1}{\pi \sqrt{1-\varrho^2}^{\pi/\theta}}\frac{\frac{\pi}{\theta}r^{\frac{\pi}{\theta}-1}{z}_2
					}{{z}_2^2+\Big({ \big(\frac{r}{\sqrt{1-\varrho^2}}\big)^{\frac{\pi}{\theta}}
					+ {z}_1}{} \Big)^2 } \, dr,  \\ 
				P^{u,v} \big( W^1_{\tau}\in dr ,W^2_{\tau}=0 \big)
          			&=\frac{1}{\pi \sqrt{1-\varrho^2}^{\pi/\theta}} \frac{ \frac{\pi}{\theta} r^{\frac{\pi}{\theta}-1}{z}_2
				}{{z}_2^2+ \Big( {\big(\frac{r}{\sqrt{1-\varrho^2}}\big)^{\frac{\pi}{\theta}}-
				{z}_1} \Big)^2} \,dr, \end{split}
			\end{align}
	with the constants 		
			\begin{align}\label{h3}\begin{split}
			{z}_1 &= \Big( u^2\!+\frac{(v-\varrho u)^2}{1-\varrho^2} \Big)^{\frac{\pi}{2\theta}}
				\!\cos \Big( \frac{\pi}{\theta} \Big(\!\arctan \!
				\Big({\frac{v-\varrho u}{\sqrt{1-\varrho^2}u}} \Big)\! +
				\!\arctan \!\Big( \frac{\varrho}{\sqrt{1-\varrho^2}} \Big)\Big),\\
			{z}_2 &= \Big( u^2\!+\frac{(v-\varrho u)^2}{1-\varrho^2} \Big)^{\frac{\pi}{2\theta}}
				\!\sin \Big( \frac{\pi}{\theta} \Big( \!\arctan \!
				\Big( {\frac{v-\varrho u}{\sqrt{1-\varrho^2}u}} \Big) \!+
				\!\arctan \!\Big( \frac{\varrho}{\sqrt{1-\varrho^2}} \Big)\Big).\end{split}
			\end{align}			
	From the polynomial decay of the densities given in (\ref{h1}) the number of finite moments can be deduced.
	\begin{remark}
		The explicit density (\ref{h1}) will play a crucial part in Section 3. For the purposes of this section, the density serves as a tool to understand the longtime behavior of moments, whereas for $\textrm{SBM}_\infty$ it will be the main building block of a 		construction of the process.
	\end{remark}			
	
	A direct application of the exit-point exit-time equivalence is a proof for the critical moment curve of symbiotic branching processes. 	
	\begin{thm}[Critical Moment Curve]\label{thm:curve}
		Suppose  $\varrho\in(-1,1)$ and $\gamma>0$, then the following hold for $p>1$:
	\begin{itemize}
	 \item[a)] If $d=1,2$, then
		\begin{eqnarray*}
			p<p(\varrho) \quad \Longleftrightarrow\quad \E^{\1,\1}[u_t(k)^p] \text{ is bounded in }t\geq 0.
		\end{eqnarray*}
	\item[b)] If $d\geq 3$, then
		\begin{eqnarray*}
			p<p(\varrho)\,\quad \Longrightarrow \quad  \E^{\1,\1}\big[u_t(k)^p\big] \text{ is bounded in }t\geq 0.
		\end{eqnarray*}
	\end{itemize}
	By symmetry, the same statement holds for $u$ replaced by $v$. The inverse direction of b) fails and depends on $\gamma$.
	\end{thm}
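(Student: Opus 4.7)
My plan is to use the self-duality Lemma~\ref{lem:11} to reduce the question of moments of $u_t(k)$ under homogeneous initial data to that of moments of the total mass under finite initial data, and then to apply the time-change representation of Lemma~\ref{la:eds} to recognise the total mass as a stopped $\varrho$-correlated planar Brownian motion, so that the exit-point exit-time equivalence (Lemma~\ref{ete}) delivers the threshold $p(\varrho)$. Concretely, I first observe that symmetry together with the convexity inequality $(a+b)^p\leq 2^{p-1}(a^p+b^p)$ yields $\E^{\1,\1}[u_t(k)^p]\asymp \E^{\1,\1}[(u_t(k)+v_t(k))^p]$ uniformly in $t$, so Lemma~\ref{lem:11} rewrites the problem in terms of $\E^{\1_k,\1_k}[M_t^p]$ with $M_t:=\langle u_t,\1\rangle+\langle v_t,\1\rangle$. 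Under $\P^{\1_k,\1_k}$, Lemma~\ref{la:eds} furnishes a $\varrho$-correlated planar Brownian motion $(W^1,W^2)$ started from $(1,1)$, with exit time $\tau$ from the first quadrant, and a continuous time-change $\sigma_t\leq\tau$ satisfying $M_t=\tilde W_{\sigma_t}$ for $\tilde W:=W^1+W^2$, a Brownian motion of speed $2(1+\varrho)$ that remains non-negative on $[0,\tau]$.

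For the forward direction (valid in any dimension) I apply Burkholder-Davis-Gundy to the non-negative martingale $\tilde W_{\cdot\wedge\tau}$, whose bracket equals $2(1+\varrho)(t\wedge\tau)$, to obtain a bound of the form
\begin{equation*}
\E\Bigl[\sup_{s\leq t}\tilde W_{s\wedge\tau}^p\Bigr]\leq C_p\bigl(\tilde W_0^p+\E[\tau^{p/2}]\bigr).
\end{equation*}
By Lemma~\ref{ete}(ii), $\E[\tau^{p/2}]<\infty$ whenever $p<p(\varrho)$, and monotone convergence as $t\to\infty$ upgrades this to $\E[\sup_{s\leq\tau}\tilde W_s^p]<\infty$. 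Since $\sigma_t\leq\tau$ and $\tilde W\geq 0$ on $[0,\tau]$, this gives $\E[M_t^p]\leq\E[\sup_{s\leq\tau}\tilde W_s^p]$ uniformly in $t$, proving the $\Leftarrow$ implication of (a) and all of (b).

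For the reverse direction when $d\leq 2$, I suppose $\sup_t\E^{\1,\1}[u_t(k)^p]<\infty$, so by the reduction above $\sup_t\E^{\1_k,\1_k}[M_t^p]<\infty$. Since $M_t$ is a non-negative martingale (Lemma~\ref{lem:3}) it converges a.s.\ to some $M_\infty$, and Remark~\ref{rem:2807_1} (applicable since $d\leq 2$ and $\varrho\in(-1,1)$) identifies $M_\infty\stackrel{d}{=}W^1_\tau+W^2_\tau$. Fatou's lemma then yields
\begin{equation*}
\E[(W^1_\tau+W^2_\tau)^p]=\E[M_\infty^p]\leq\liminf_{t\to\infty}\E^{\1_k,\1_k}[M_t^p]<\infty,
\end{equation*}
and since at time $\tau$ one coordinate of $(W^1_\tau,W^2_\tau)$ vanishes, the sum $W^1_\tau+W^2_\tau$ is comparable to $|(W^1_\tau,W^2_\tau)|$, so Lemma~\ref{ete}(iii) forces $p<p(\varrho)$.

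The hard part is essentially bookkeeping: applying BDG and the optional-stopping argument cleanly to the Brownian motion stopped at the random time $\tau$ and transferring the bound through the time-change $\sigma_t$; the heavy analytic content (the explicit exit-law density and the exponent $p(\varrho)$) is already packaged into Lemma~\ref{ete}. The reverse direction fails in $d\geq 3$ precisely because in the transient regime the time-change may satisfy $T(\infty)<\tau$ with positive probability, so $M_\infty$ is strictly smaller than $\tilde W_\tau$ and can retain a finite $p$-th moment for $p\geq p(\varrho)$ provided $\gamma$ is sufficiently large.
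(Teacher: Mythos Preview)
Your proof is correct and follows essentially the same route as the paper: reduction via Lemma~\ref{lem:11} to the total-mass process, the time-change of Lemma~\ref{la:eds} to recognise a stopped $\varrho$-correlated planar Brownian motion, BDG together with Lemma~\ref{ete}(ii) for the forward direction, and Fatou plus the almost sure convergence of Remark~\ref{rem:2807_1} with Lemma~\ref{ete}(iii) for the reverse direction in $d\leq 2$. The only cosmetic differences are that you explicitly name $\tilde W=W^1+W^2$ and argue the comparability $\E[u_t(k)^p]\asymp\E[(u_t(k)+v_t(k))^p]$ via symmetry and convexity, whereas the paper phrases the last step as ``considering the cases $u_t(k)\leq v_t(k)$ and $v_t(k)\leq u_t(k)$''; your closing heuristic on the $\gamma$-dependence in $d\geq 3$ is speculative but not part of the proof proper.
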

	\begin{proof}[Sketch of Proof]
		Taking into account Lemma \ref{lem:11}, it suffices to prove the equivalence for the total-mass process $\langle u_t,\1\rangle$ started at localized initial condition. 
				
		\smallskip
		``$\Rightarrow$'': The proof basically follows from Lemma \ref{la:eds} and works for a) and b): The total-masses are time-changed $\varrho$-correlated Brownian motions and furthermore the quadratic variation (which is nothing but the time-change of the Brownian motions) is bounded by $\tau$ as otherwise one of the total-mass processes would become negative. Hence, by the 	Burkholder-Davis-Gundy inequality
		\begin{align*}
			 \E^{{\bf 1}_k,{\bf 1}_k} \big[ (\langle \1, {u}_t \rangle + \langle \1, {v}_t \rangle)^p \big]\leq CE^{1, 1}\big[\tau^{p/2}\big].
		\end{align*}
		The righthand side is independent of $t$ and finite due to the exit-point exit-time equivalence so that the claim follows.

	''$\Leftarrow$'': Suppose $p>p(\varrho)$. As in the proof of Theorem \ref{prop:convlaw} we use the almost sure convergence of $(\langle \1,{u}_t \rangle,\langle \1, {v}_t \rangle)$ to 
	$(W^1_{\tau}, W^2_{\tau})$ (this only works in case a), see Remark~\ref{rem:2807_1}). 
 Combining this with Fatou's Lemma gives
	\begin{align*}
		\liminf_{t\rightarrow \infty} \, \E^{{\bf 1}_k,{\bf 1}_k}\big[(\langle \1, {u}_t \rangle + \langle \1, {v}_t \rangle)^p\big]
		\geq \liminf_{t\rightarrow \infty}\E^{{\bf 1}_k,{\bf 1}_k}\big[\langle \1, {u}_t \rangle^p\big]
		\geq \E^{\1_k,\1_k}\big[\liminf_{t\rightarrow \infty}\langle \1,
		{u}_t \rangle^p\big] 
		=E^{1,1}\big[(W^1_{\tau})^p\big].
	\end{align*}
	The righthand side is infinite due to the exit-point exit-time equivalence so that the moment diverges. The results for $ \E^{\1,\1}\big[u_t(k)^p\big]$ can now be readily deduced by considering the cases $u_t(k)\leq v_t(k)$ and $v_t(k)\leq u_t(k)$.	
	\end{proof}

	\begin{figure}\label{fig}
	\begin{center}
        \hspace{-0.8cm} 
        \includegraphics[scale=0.25]{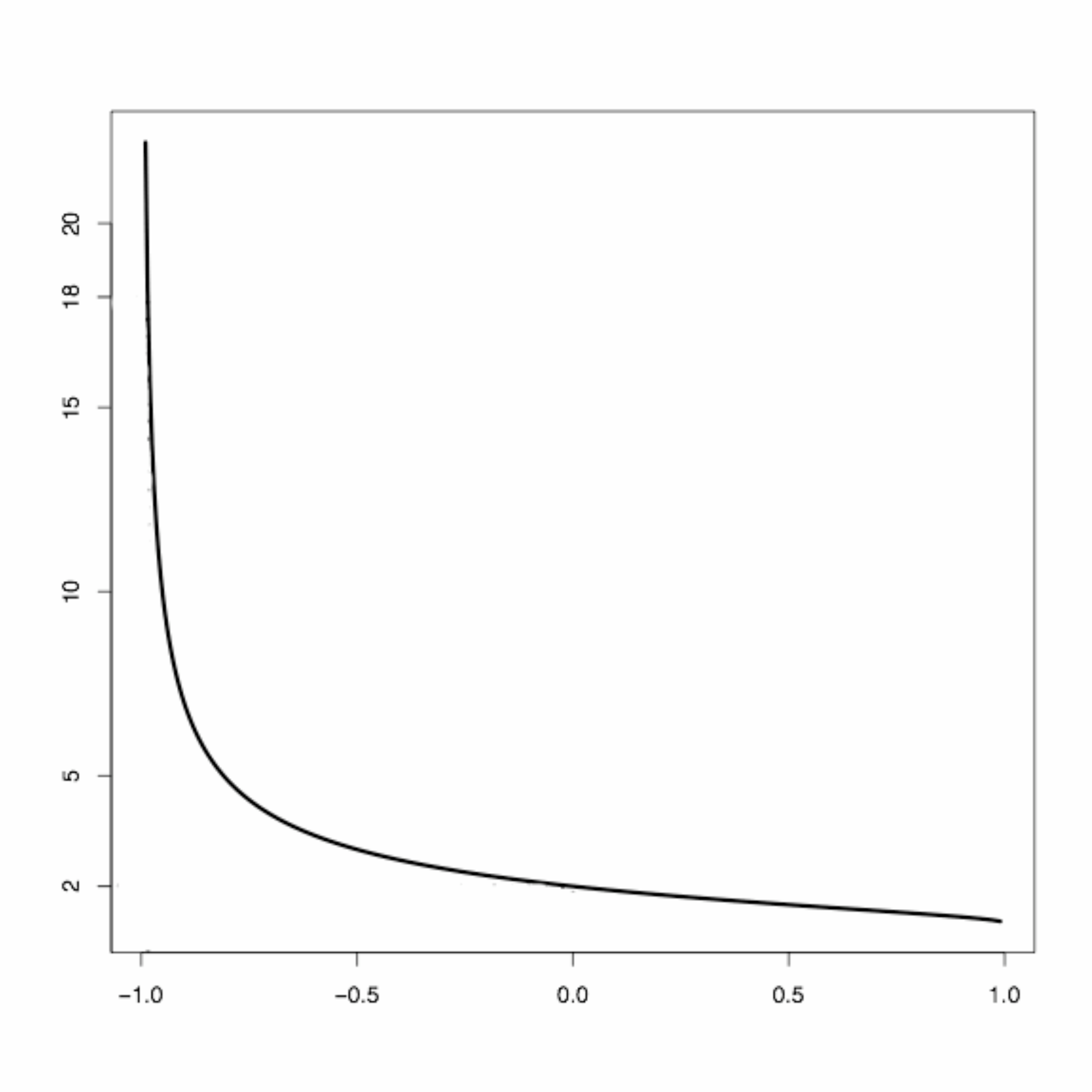}
      \end{center}
        \vspace{-.8cm} 
	\caption{The critical moment curve $p$ as a function of $\varrho \in (-1,1).$}
	\label{fig:cl}
\end{figure}



\subsection{Continuuum Model and the Interface Problem}\label{sec:wave}
	All results discussed above can equally be shown for the continuum space analogue model
 in low dimensions. We will briefly discuss this setting as it serves as an important motivation for the study of $\textrm{SBM}_\infty$.\\
	Let us first introduce the model for $d=1$. The continuum space symbiotic branching model is defined by the pair of stochastic heat equations
	\begin{equation}\label{eqn:spde}
		\begin{cases}
			\frac{\partial }{\partial t}u_t(x) = \frac{1}{2}\Delta u_t(x)+\sqrt{ \gamma u_t(x) v_t(x)} \, W^1(dt,dx),\\
			\frac{\partial }{\partial t}v_t(x) = \frac{1}{2}\Delta v_t(x)+\sqrt{ \gamma u_t(x) v_t(x)} \, W^2(dt,dx),
		\end{cases}
	\end{equation}
	where now $\Delta=\frac{\partial^2 }{\partial x^2}$ denotes the typical Laplace operator on $\R$. The driving noises ${W}^1,{W}^2$ are standard Gaussian white noises on $\mathbb{R}_+ \times \mathbb{R}$ with correlation parameter 
	$\varrho\in [-1,1]$, i.e. the unique Gaussian processes with covariance structure
	\begin{align*}
		\mathbb{E} \big[ W^1_{t_1}(A_1)W^1_{t_2}(A_2) \big] &=  (t_1\wedge t_2) \lambda (A_1\cap A_2),\\
		\mathbb{E} \big[ W^2_{t_1}(A_1)W^2_{t_2}(A_2) \big] &=  (t_1\wedge t_2) \lambda (A_1\cap A_2),\\
		\mathbb{E} \big[ W^1_{t_1}(A_1)W^2_{t_2}(A_2) \big] &= \varrho (t_1\wedge t_2) \lambda (A_1\cap A_2),
	\end{align*}
	where $\lambda$ denotes Lebesgue measure, $A_1,A_2\in\mathcal{B}(\R)$ and $t_1,t_2 \geq 0$.  Solutions of this model have been considered 	rigorously in the framework of the corresponding martingale problem in Theorem~4 of \cite{EF04}, which states that, under suitable growth conditions on the initial conditions $u_0, v_0$, a solution exists for all 
	$\varrho \in [-1, 1]$. Uniqueness for $\varrho\in (-1,1)$ can be obtained via the self-duality as in the proof of Corollary \ref{uniq}. The moment duality also holds with particles moving as Brownian motions and collision times replaced 	by collision local times.
	\smallskip
	
	Stochastic heat equations typically have function-valued  solutions only in spatial dimension $d=1$. The particular symmetric nature of $\textrm{MCB}_\gamma$ changes this property: it was shown in \cite{DEFMPX1} and \cite{DEFMPX2} that  
	 $\textrm{MCB}_\gamma$  do exist in the  continuous setting in dimension $d=2$ for $\gamma$ small enough. Existence 
 of solutions in dimensions $d>2$ is unknown. 
	\smallskip
	
	The results on the longtime behavior will not be repeated here; those are similar to the results discussed for the discrete spatial case for $d=1,2$. Instead, we include a result of \cite{BDE11} refining a Theorem of \cite{EF04}.
	 To explain this,  the notion of the interface of continuous-space symbiotic branching processes is needed.
	\begin{definition}\label{def:ifc}
		The interface at time $t$ of a solution $(u_t,v_t)$ of the symbiotic branching model $\mathrm{SBM}_\gamma$ with $\varrho \in [-1,1]$ is defined as 
		\begin{eqnarray*}
			\mbox{Ifc}_t = \mbox{cl} \big\{x : u_t(x) v_t(x) > 0 \big\},
		\end{eqnarray*}
		where $\mbox{cl}(A)$ denotes the closure of the set $A$ in $\R$.
	\end{definition}

	The main question addressed in \cite{EF04} is whether for complementary Heaviside initial conditions 
		\[
 	u_0(x) = {\bf 1}_{\R^-}(x) \quad \mbox{ and } \quad v_0(x) = {\bf 1}_{\R^+}(x).
	\]	
	the so-called compact interface property holds, that is, whether the interface is compact at each time almost surely. This is answered 		affirmatively in Theorem~6 in \cite{EF04}, together with the assertion that the interface propagates with at most linear speed, i.e. for each $\varrho \in [-1,1]$ there exists a constant $c>0$ and a finite random-time $T_0$ so that 	almost surely for all $T \ge T_0$
	\begin{align}\label{EF}
 	\bigcup_{t \le T} \mbox{Ifc}_t \subseteq \big[-cT, cT\big].
	\end{align}
	For the stochastic heat equation with Wright-Fisher noise corresponding to $\varrho=-1$, it was shown in \cite{T95} that the correct propagation of the interface is of order $\sqrt{T}$ so that one might ask whether (\ref{EF}) is sharp for $\varrho>-1$. Here is a refinement of (\ref{EF}), proved in \cite{BDE11}, for which the critical moment curve was originally developed.
	
	\begin{thm}\label{cor:wavespeed}
		Suppose $\varrho$ is chosen sufficiently small such that $p(\varrho)>35$ and $\gamma>0$, then there is a constant $C>0$ and a finite random-time $T_0$ such that almost surely for all $T>T_0$. 
		\begin{eqnarray*}
			\bigcup_{t\leq T} \mathrm{Ifc}_t \subseteq \Big[-C\sqrt{T\log(T)},C\sqrt{T\log(T)} \Big].
		\end{eqnarray*}
		
	\end{thm}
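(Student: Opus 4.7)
The plan is to upgrade the linear interface bound of~(\ref{EF}) to the $\sqrt{T\log T}$ scale by exploiting the finite $p$-th moment estimate from Theorem~\ref{thm:curve} (whose continuum-space analogue applies here), combined with the Gaussian tails of the one-dimensional heat kernel. The hypothesis $p(\varrho)>35$ ensures that $p$ can be taken slightly above $35$, which is the exact amount of integrability needed to absorb a polynomial-in-$T$ union bound over space-time.

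First I would reduce the almost-sure inclusion to a Borel-Cantelli argument along the geometric sequence $T_n=2^n$: it suffices to prove that for some $\beta>0$ and all $n$ large,
$$\P\bigg(\bigcup_{t\le T_n}\mathrm{Ifc}_t\not\subseteq\big[-C\sqrt{T_n\log T_n},C\sqrt{T_n\log T_n}\big]\bigg)\le C_1 T_n^{-\beta},$$
after which summability of $T_n^{-\beta}$ and monotonicity of $T\mapsto\bigcup_{t\le T}\mathrm{Ifc}_t$ give the statement for all real $T>T_0$, at the cost of enlarging $C$. By symmetry between $u$ and $v$ it is enough to show that with high probability $u_t(x)=0$ for every $t\le T$ and every $x>C\sqrt{T\log T}$, since for such $x$ the Heaviside initial condition forces $v_t(x)$ to stay bounded away from $0$ on positive measure locally, so $u_t(x)=0$ means $x\notin\mathrm{Ifc}_t$.

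Next I would insert the mild form of~(\ref{eqn:spde}),
$$u_t(x)=P_t u_0(x)+\int_0^t\!\!\int_{\R} p_{t-s}(y-x)\sqrt{\gamma u_s(y)v_s(y)}\,W^1(dy,ds),$$
and control the two pieces separately. The deterministic piece $P_t u_0(x)=P_t\mathbf{1}_{\R^-}(x)=\mathrm{erfc}\big(x/\sqrt{2t}\,\big)/2$ satisfies $P_t u_0(x)\le e^{-x^2/(2t)}\le T^{-C^2/2}$ at $x=C\sqrt{T\log T}$ for all $t\le T$. For the stochastic piece I would combine Burkholder-Davis-Gundy with Minkowski's integral inequality and Cauchy-Schwarz to get
$$\big\|u_t(x)-P_tu_0(x)\big\|_{L^p(\P)}^2\le C_p\gamma\int_0^t\!\!\int_{\R} p_{t-s}(y-x)^2\,\|u_s(y)\|_{L^p(\P)}\|v_s(y)\|_{L^p(\P)}\,dy\,ds,$$
and invoke the continuum analogue of Theorem~\ref{thm:curve} to bound the $L^p(\P)$-norms uniformly in $(s,y)$ for $p<p(\varrho)$. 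To extract spatial decay I would split the $y$-integration into the near region $|y-x|<r$ and far region $|y-x|\ge r$ with $r=\tfrac{C}{2}\sqrt{T\log T}$: on the far part the Gaussian bound $p_{t-s}(y-x)^2\le Ct^{-1/2}e^{-r^2/(2t)}$ produces an extra factor $T^{-C^2/8}$, while on the near part $y$ is itself in the half-line $y>\tfrac{C}{2}\sqrt{T\log T}$ so the argument can be iterated or combined with a bootstrap using~(\ref{EF}) to show that the moments of $u_s(y)v_s(y)$ are already small there.

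Combining these estimates gives $\E[u_t(x)^p]\le T^{-\alpha(C)}$ with $\alpha(C)\to\infty$ as $C\to\infty$, for every $x>C\sqrt{T\log T}$ and $t\le T$. A union bound over a polynomial-in-$T$ space-time grid covering $[C\sqrt{T\log T},cT]\times[0,T]$, together with a standard Kolmogorov-type modulus-of-continuity estimate for $(t,x)\mapsto u_t(x)$ to pass from the grid to all points, and Chebyshev's inequality with exponent $p$, then yields the desired tail $T^{-\beta}$. The main obstacle is precisely the quantitative accounting: the threshold $p(\varrho)>35$ is engineered so that after paying for the polynomial size of the grid (a factor $T^{c_1}$ with $c_1$ depending on the Hölder regularity of the SPDE in space and time) and for the Cauchy-Schwarz and near-far decomposition, one still has a strictly negative power of $T$ left. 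The most delicate technical point will be confirming that the continuum moment bound of Theorem~\ref{thm:curve} applies with a constant uniform in $s$ and $y$ for the Heaviside initial conditions, which requires either a monotonicity/comparison argument against constant initial data $\mathbf{1}$ or a direct repetition of the exit-law calculation~(\ref{h1}) in this non-translation-invariant setup.
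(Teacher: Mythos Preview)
Your overall architecture (geometric time discretization, Borel--Cantelli, $p$th-moment bounds combined with heat-kernel Gaussian tails, union over a space--time grid plus a Kolmogorov continuity step) is indeed the dyadic-grid strategy of \cite{T95} that the paper cites via \cite{BDE11}, and your reading of the hypothesis $p(\varrho)>35$ as the amount of integrability needed to beat the polynomial grid cardinality is correct. However, two of your steps do not go through as written.

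First, and most seriously, you propose to show that $\E[u_t(x)^p]$ is small for $x>C\sqrt{T\log T}$ and then conclude ``with high probability $u_t(x)=0$''. Moment smallness never yields almost-sure vanishing; Chebyshev only gives $\P(u_t(x)>\epsilon)\le \epsilon^{-p}\E[u_t(x)^p]$, and a positive $u_t(x)$, however small, still places $x$ in the interface as soon as $v_t(x)>0$. The actual argument in \cite{BDE11}/\cite{T95} does not estimate $u$ at far-away points; it tracks the right endpoint $R_t=\sup\{x:u_t(x)>0\}$ of the support directly. On a short time interval one uses the mild equation together with the fact that the noise term $\sqrt{\gamma u_sv_s}$ vanishes identically to the right of $R_s$, so that any advance of $R$ is driven by the heat kernel transporting mass from the \emph{interior} of the interface. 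The $p$th-moment bound enters only to control $\sup_x\|u_s(x)v_s(x)\|_{L^{p/2}}$ inside the interface, i.e.\ the strength of the noise there, not to prove smallness outside.

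Second, your plan to transfer the moment bound of Theorem~\ref{thm:curve} from constant initial data $\mathbf 1,\mathbf 1$ to Heaviside data by ``monotonicity/comparison'' is problematic: no pathwise comparison principle is available for the two-type symbiotic system (the coupling between $u$ and $v$ through the common factor $\sqrt{uv}$ destroys monotone dependence on initial data). In \cite{BDE11} the uniform-in-$(s,y)$ moment bound for Heaviside data is obtained directly from the moment duality of Lemma~\ref{la:mdual}, where the initial condition enters multiplicatively through $(u_0,v_0)^{(l^1_t,l^2_t)}\le 1$, so that the Heaviside estimate is in fact dominated by the homogeneous one without any stochastic comparison. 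You should invoke the duality rather than a comparison argument here.
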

	The strong restriction on $\varrho$ is probably not necessary and is only caused by the technique of the proof which is based on the dyadic grid technique utilized for the proof of \cite{T95}. To circumvent the boundedness of all moments that holds only for $\varrho=-1$, $35$th moments have to be bounded in time.\\
	 Though the assumption forces $\varrho\leq  -0.9958$ the result is still interesting. It shows that sub-linear 		speed of propagation is not restricted to situations in which solutions are uniformly bounded as they are for $\varrho=-1$. 
	\bigskip
		
	Finally, let us motivate the construction and the study of $\textrm{SBM}_\infty$ in Section 3. The scaling property for symbiotic branching on the continuum (see Lemma 8 of \cite{EF04}) states that if $(u_t,v_t)$ is a solution started at Heavyside 	initial conditions, then
	\begin{align*}
		(\tilde u_t(x),\tilde v_t(x)):=\big(u_{Nt}\big(\sqrt{N}x\big),v_{Nt}\big(\sqrt{N}x\big)\big)
	\end{align*}
	is a solution of $\textrm{SBM}_{\sqrt{N}\gamma}$ with Heavyside initial condition. Hence, propagation of the interface of order $\sqrt{T}$ will be intimately related to the behavior of $\textrm{SBM}_{\gamma}$ with $\gamma$ 		tending to infinity.\\
	Unfortunately, the constructions in Section 3 can only be seen as a first step towards the correct order of interface propagation: the construction for the limiting process $\textrm{SBM}_\infty$ could so far be carried out only for discrete spatial symbiotic 	branching processes. It is still an open question how to extend the characterizations and constructions of $\textrm{SBM}_\infty$ to the continuum analogue.

\newpage
\section{Infinite Rate Symbiotic Branching Processes}\label{sec:3}
 In Section~\ref{sec:voter1} we discussed how the standard voter processes can be viewed as an infinite rate stepping stone model, or, in other words, $\textrm{SBM}_\infty(\varrho)$ for $\varrho=-1$. 
	It is not at all clear if and how that motivation  extends to $\varrho\neq -1$ as the coalescing particles duality seems to have no extension to $\varrho\neq -1$. Taking into account the colored 		particles dual instead, it is by no means clear whether sending $\gamma$ to infinity leads to a non-trivial process: for $\gamma=\infty$ the changes of color occur instantaneously but at the same time the exponent is multiplied 	by $\infty$, so that the moment expression only makes sense if the exponent is almost surely non-positive.\\
	Nonetheless, using the self-duality instead of the moment-duality, it can be shown that sending the branching rate to infinity makes sense. To understand the effect in a nutshell, let us take a closer look at the non-spatial system of 	symbiotic	branching SDEs
	\begin{align}\label{sde}\begin{cases}
		du_t=\sqrt{\gamma u_tv_t}\,dB_t^1,\\
		dv_t=\sqrt{\gamma u_tv_t}\,dB_t^2,\end{cases}
	\end{align}
	with non-negative initial conditions $(u,v)$. Due to the symmetric structure, we got in Lemma \ref{la:eds} that
	\begin{align*}
		\big( W_{t}^1,W_{t}^2\big):=\big(u_{T^{-1}(t)},v_{T^{-1}(t)}\big)
	\end{align*}	
	are $\varrho$-correlated Brownian motions if we use the time-change $T(t)=\gamma \int_0^tu_sv_s\,ds$. Caused by the product structure of the time-change the boundary $E$ of the first quadrant is absorbing. Hence, the Brownian motions $W^1,W^2$ stop at the first hitting-time $\tau$ of $E$. Increasing $\gamma$ only has the effect that $(u_t,v_t)$ follows the Brownian paths with different speed so that $\gamma=\infty$ corresponds to at once picking a point in $E$ according to the exit-measure $Q^\varrho$ on $E$ and freeze thereafter (recall (\ref{exitd})).\\ To make this argument precise one has to be slightly more careful as the parameter $\gamma$ does not only occur as multiple in the time-change $T$ but also effects the solution itself. To circumvent this obstacle one has to take into account the structure of the equations. Let us label the solutions by their fixed branching rate $\gamma$.  It can be shown that the sequence $(u^\gamma,v^\gamma)$ converges in the so-called Me
 yer-Zheng ``pseudo-path" topology (for which we refer to \cite{MZ} and \cite{J97})   to a limit $(U,V)$. Stochastic boundedness in $\gamma$ and $t$  of the square-function
		$\gamma \int_0^t u_s^\gamma v_s^\gamma ds$
		by $\tau$ implies that 
		$\int_0^t U_sV_s\,ds=0.$
	 Hence, the limiting process $(U,V)$ takes values in $E$. The only possible limit is the constant process $(U_t,V_t)=(U_0,V_0), t\geq 0$, where $(U_0,V_0)$ is distributed according to $Q^\varrho_{u,v}$
	 because the prelimiting processes $(u^\gamma,v^\gamma)$ are eventually trapped at $E$ at a point distributed according to $Q^\varrho_{u,v}$.
	 	\medskip
	
	Incorporating space, a second effect occurs: both types change their mass on $S$ according to a heatflow. This smoothing effect immediately tries to lift a zero coordinate if it was pushed by the exit-measure $Q^\varrho$ to zero. 
	Interestingly, none of the two effects dominates and a non-trivial limiting process (with values in $E$ for each site $k\in S$) can be obtained when letting the branching rate tend to infinity.	
	\begin{convention}
		In contrast to Section \ref{sec:2} we do not restrict to the discrete Laplacian $\Delta$ here and instead replace $\Delta$ by $\mathcal A$ as in Section \ref{sec:uni}. Accordingly, $\Z^d$ is replaced by a general countable set 		$S$.
	\end{convention}
	The aim of this section is to explain how the results of  \cite{KM10b} and \cite{KO10} on the infinite rate mutually catalytic branching process $\textrm{MCB}_\infty$ can be generalized to $\varrho\neq 0$.
	 After introducing more notation for the state-spaces, different approaches to infinite rate symbiotic branching processes are presented: a characterization via an abstract martingale problem, two limiting constructions and a more hands-on representation via Poissonian integral equations. 
\subsubsection{Some Notation}
	The finite rate symbiotic branching processes were studied on subspaces of $\big(\R^+\times \R^+\big)^S$, i.e. at each site of the countable set $S$ the solution processes consist of a pair of non-negative values. According to the heuristic reasoning above, at each site $k\in S$ infinite rate processes take values on the boundary $E$ of the first quadrant so that we can expect to find an $E^S$-valued process. As usual, certain growth restrictions need to be imposed to find a tractable subspace of $E^S$. In accordance with the state-space $L_\beta^2$ for finite rate 	symbiotic branching processes we stick to the analogue subspace of $E^S$:
	\begin{align}\label{111}
		L_\beta^{2,E}:=L_\beta^2\cap E^S
	\end{align}
	equipped with the same norm as $L^2_\beta$.
	Furthermore, we will use subspaces of compactly supported and summable initial conditions that will be denoted by $L^{f,E}$ and $L^{\Sigma,E}$. In contrast to $\textrm{SBM}_\gamma$, the infinite rate processes are not continuous so that solutions have paths in $D\big([0,\infty),L^{2,E}_\beta\big)$, the set of functions that are right-continuous with limits from the left.

\subsection{Martingale Properties}\label{sec:mp}

	In order to define infinite rate processes rigorously, in \cite{KM10b} a martingale problem characterization was proposed for infinite rate mutually catalytic branching processes. This formulation uniquely determines the process but is not very useful for understanding properties of the process. Crucial properties of the process, such as non-continuity of sample paths, are not clear from this formulation. Nonetheless, it seems to be the most convenient way to introduce the 		process as it directly reveals the connection to the finite rate processes. In what\ follows we are going to extend the results of \cite{KM10b} to $\varrho\neq 0$. 
		
	\smallskip
	To define the characterizing martingale problem one crucially uses the self-duality function 
	\begin{align}\label{F}
		F(x_1,x_2,y_1,y_2)&=\exp\big(\langle \langle x_1,x_2,y_1,y_2 \rangle\rangle_\varrho\big)
	\end{align}
	defined in (\ref{sd}). We include the next two simple (stochastic) calculus lemmas in order to clarify the appearance of $\varrho$ in the definition of $\langle \langle \cdot,\cdot,\cdot,\cdot\rangle\rangle_\varrho$. 
		\begin{lem}\label{l6}
			Suppose $(x_1,x_2)\in L^2_{\beta}$ and $(y_1,y_2)$ are compactly supported, then for all $k\in S$
			\begin{align*}
				\frac{\partial}{\partial x_1(k)}F(x_1,x_2,y_1,y_2)&=F(x_1,x_2,y_1,y_2) \langle \langle 1,0,y_1,y_2 \rangle\rangle_\varrho\\
				\frac{\partial}{\partial x_2(k)}F(x_1,x_2,y_1,y_2)&=F(x_1,x_2,y_1,y_2) \langle \langle 0,1,y_1,y_2 \rangle\rangle_\varrho
			\end{align*}
			and
			\begin{align*}
				&\quad\left[\frac{1}{2}\frac{\partial^2}{\partial x_1(k)^2}+\frac{1}{2}\frac{\partial^2 }{\partial x_2(k)^2}+\varrho \frac{\partial^2}{\partial x_1(k)\partial x_2(k)} \right]F(x_1,x_2,y_1,y_2)
				=4(1-\varrho^2)F(x_1,x_2,y_1,y_2)y_1(k)y_2(k),
			\end{align*}
			 where $\partial/\partial x_1(k)$ (resp. $\partial/\partial x_2(k)$) denotes the partial derivative with respect to the $k$th coordinate of the first (resp. second) entry.
		\end{lem}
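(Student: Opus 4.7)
The plan is to verify the three claims by straightforward computation, exploiting the fact that $\langle\langle x_1,x_2,y_1,y_2\rangle\rangle_\varrho$ is bilinear in its arguments and that $F$ is built from it via the exponential. Specifically, writing out the summand at site $k$,
\begin{align*}
\langle\langle x_1,x_2,y_1,y_2\rangle\rangle_\varrho
&=\sum_{j}\Big[-\sqrt{1-\varrho}(x_1(j)+x_2(j))(y_1(j)+y_2(j))\\
&\qquad\qquad +i\sqrt{1+\varrho}(x_1(j)-x_2(j))(y_1(j)-y_2(j))\Big],
\end{align*}
only the $j=k$ term depends on $x_1(k)$ and $x_2(k)$, and the dependence is linear. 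So the first-derivative statements follow immediately from the chain rule: $\partial_{x_1(k)}F=F\cdot\partial_{x_1(k)}\langle\langle\cdot\rangle\rangle_\varrho$, and this partial derivative is exactly what is obtained by evaluating $\langle\langle\cdot,\cdot,y_1,y_2\rangle\rangle_\varrho$ with the first slot's $k$-th coordinate set to $1$ and everything else to $0$ (and similarly for $x_2(k)$).

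For the second-order identity I would introduce the shorthand $a:=-\sqrt{1-\varrho}\bigl(y_1(k)+y_2(k)\bigr)$ and $b:=i\sqrt{1+\varrho}\bigl(y_1(k)-y_2(k)\bigr)$, so that $\partial_{x_1(k)}\langle\langle\cdot\rangle\rangle_\varrho=a+b$ and $\partial_{x_2(k)}\langle\langle\cdot\rangle\rangle_\varrho=a-b$. Since $\langle\langle\cdot\rangle\rangle_\varrho$ is linear in each variable, second derivatives hit only the exponential factor, giving
\begin{align*}
\partial^2_{x_1(k)}F=F(a+b)^2,\qquad \partial^2_{x_2(k)}F=F(a-b)^2,\qquad \partial^2_{x_1(k)x_2(k)}F=F(a^2-b^2).
\end{align*}
Combining these,
\begin{align*}
\tfrac12(a+b)^2+\tfrac12(a-b)^2+\varrho(a^2-b^2)=(1+\varrho)a^2+(1-\varrho)b^2.
\end{align*}

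Plugging in $a^2=(1-\varrho)(y_1(k)+y_2(k))^2$ and $b^2=-(1+\varrho)(y_1(k)-y_2(k))^2$ yields
\begin{align*}
(1-\varrho^2)\bigl[(y_1(k)+y_2(k))^2-(y_1(k)-y_2(k))^2\bigr]=4(1-\varrho^2)\,y_1(k)y_2(k),
\end{align*}
using the elementary identity $(p+q)^2-(p-q)^2=4pq$. Multiplying by $F$ gives the claim.

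There is no real obstacle here beyond bookkeeping: the whole content of the lemma is the algebraic identity $(1+\varrho)(1-\varrho)=(1-\varrho^2)$ together with the polarization of $y_1y_2$. The slightly delicate point is the factor of $i$ in $b$, which makes $b^2$ negative; this is precisely what produces the difference of squares $(y_1+y_2)^2-(y_1-y_2)^2$ rather than a sum, and hence the factor $y_1(k)y_2(k)$ needed so that the operator in the lemma annihilates $F$ wherever the mutually catalytic term $y_1y_2$ vanishes. One should also briefly note that the assumptions $(x_1,x_2)\in L^2_\beta$ and $(y_1,y_2)$ compactly supported ensure convergence of the defining sum and smoothness of $F$ in each coordinate, so differentiation under the sum is justified trivially.
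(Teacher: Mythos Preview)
Your proof is correct and follows essentially the same approach as the paper's own proof: the paper likewise skips the first derivatives as routine, introduces the abbreviations $c(k)=y_1(k)+y_2(k)$ and $d(k)=y_1(k)-y_2(k)$ (playing the role of your $a$ and $b$ up to the coefficients $-\sqrt{1-\varrho}$ and $i\sqrt{1+\varrho}$), and then expands the same combination of squares to arrive at $(1-\varrho^2)\bigl[c(k)^2-d(k)^2\bigr]=4(1-\varrho^2)y_1(k)y_2(k)$. Your added remarks on why the factor $i$ is essential and on the role of the compact-support assumption are apt but not present in the paper's terse version.
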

		\begin{proof}
			First note that all appearing infinite sums are actually finite as $y_1$ and $y_2$ are compactly supported. We leave the simple derivations of the first derivatives to the reader as it does not clarify the influence of 
			$\varrho$.\\
			Abbreviating $c(k)=y_1(k)+y_2(k)$ and $d(k)=y_1(k)-y_2(k)$, by the chain rule we obtain
			\begin{align*}
				&\quad\left[\frac{1}{2}\frac{\partial^2}{\partial x_1(k)^2}+\frac{1}{2}\frac{\partial^2 }{\partial x_2(k)^2}+\varrho \frac{\partial^2}{\partial x_1(k)\partial x_2(k)} \right]F(x_1,x_2,y_1,y_2)\\
				&=F(x_1,x_2,y_1,y_2)
				\Bigg[\frac{1}{2}\Big(-\sqrt{1-\varrho}c(k)+i\sqrt{1+\varrho}d(k)\Big)^2+\frac{1}{2}\Big(-\sqrt{1-\varrho}c(k)-i\sqrt{1+\varrho}d(k)\Big)^2\\
				&\quad+\varrho\Big(-\sqrt{1-\varrho}c(k)+i\sqrt{1+\varrho}d(k)\Big)\Big(-\sqrt{1-\varrho}c(k)-i\sqrt{1+\varrho}d(k)\Big)\Bigg]
			\end{align*}
			which is equal to
			\begin{align*}
				&\quad F(x_1,x_2,y_1,y_2)\big[(1-\varrho)(1+\varrho)c^2(k)-(1+\varrho)(1-\varrho)d^2(k)\big]\\
				&=F(x_1,x_2,y_1,y_2)4(1-\varrho^2)y_1(k)y_2(k).
			\end{align*}
		\end{proof}	
	The intrinsic need for the particular choice of $\langle\langle\cdot,\cdot, \cdot,\cdot\rangle\rangle_\varrho$ can now be revealed: the additional square-roots involving $\varrho$ are chosen in such a way that the cross-variations caused by the correlated driving noises cancel.
	\begin{prop}\label{prop:mart1}
		Suppose $(u_0,v_0)\in L^2_{\beta}$, $(y_1,y_2)\in L^{f,E}$ and $(u_t,v_t)_{t\geq 0}$ is a symbiotic branching process with finite branching rate $\gamma>0$ and correlation parameter $\varrho \in [-1,1]$, then
		\begin{align}\label{mp1}\begin{split}
			&\quad M^{\varrho,\gamma}_t(u_0,v_0,y_1,y_2)\\
			&:=F(u_t,v_t, y_1,y_2)-F(u_0,v_0, y_1,y_2)-\int_0^t\langle \langle \mathcal A u_s,\mathcal A v_s,y_1,y_2\rangle\rangle_\varrho F(u_s,v_s, 				y_1,y_2)\,ds,\quad t\geq 0,
\end{split}		\end{align}
		is a martingale null at zero.
	\end{prop}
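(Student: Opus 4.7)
The plan is to apply Itô's formula to the complex-valued self-duality function $F(u_t,v_t,y_1,y_2)$, exploiting the cancellation built into $\langle\langle \cdot,\cdot,\cdot,\cdot\rangle\rangle_\varrho$ via Lemma \ref{l6}. Since $(y_1,y_2)\in L^{f,E}$ is compactly supported, the inner product $\langle\langle x_1,x_2,y_1,y_2\rangle\rangle_\varrho$ only depends on the values of $x_1,x_2$ at the finitely many sites $k$ in the support of $y_1+y_2$. Thus $F(u_t,v_t,y_1,y_2)$ is a smooth function of finitely many coordinates of $(u_t,v_t)$ and the usual finite-dimensional Itô formula (applied separately to the real and imaginary parts) applies.

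Denote by $K\subset S$ the finite support of $y_1+y_2$. Recall from Definition \ref{defsol} that at each site $k\in K$
\begin{align*}
du_s(k) &= \mathcal A u_s(k)\,ds + \sqrt{\gamma u_s(k)v_s(k)}\,dB^1_s(k),\\
dv_s(k) &= \mathcal A v_s(k)\,ds + \sqrt{\gamma u_s(k)v_s(k)}\,dB^2_s(k),
\end{align*}
with $d\langle B^1(k),B^1(k)\rangle_s=d\langle B^2(k),B^2(k)\rangle_s=ds$, $d\langle B^1(k),B^2(k)\rangle_s=\varrho\,ds$ and all cross-site covariations vanishing. Applying Itô's formula to $F$, the drift contribution from the first-order derivatives together with Lemma \ref{l6} produces
\[
\sum_{k\in K}\bigl(\mathcal A u_s(k)\,\tfrac{\partial F}{\partial x_1(k)}+\mathcal A v_s(k)\,\tfrac{\partial F}{\partial x_2(k)}\bigr)\,ds = \langle\langle \mathcal A u_s,\mathcal A v_s,y_1,y_2\rangle\rangle_\varrho\,F(u_s,v_s,y_1,y_2)\,ds,
\]
which is precisely the compensator subtracted in $M^{\varrho,\gamma}_t$. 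The remaining drift contribution, from the second-order derivatives, is
\[
\sum_{k\in K}\gamma u_s(k)v_s(k)\bigl[\tfrac{1}{2}\tfrac{\partial^2 F}{\partial x_1(k)^2}+\tfrac{1}{2}\tfrac{\partial^2 F}{\partial x_2(k)^2}+\varrho\tfrac{\partial^2 F}{\partial x_1(k)\partial x_2(k)}\bigr]\,ds,
\]
and here is where the key cancellation happens: by the second identity in Lemma \ref{l6} this bracket equals $4(1-\varrho^2)F(u_s,v_s,y_1,y_2)\,y_1(k)y_2(k)$, and since $(y_1,y_2)\in L^{f,E}\subset E^S$ we have $y_1(k)y_2(k)=0$ for every $k$. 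Hence the second-order drift vanishes identically, so that after Itô the only surviving terms besides the displayed compensator are the stochastic integrals
\[
\sum_{k\in K}\int_0^t\tfrac{\partial F}{\partial x_1(k)}(u_s,v_s,y_1,y_2)\sqrt{\gamma u_s(k)v_s(k)}\,dB^1_s(k)+\sum_{k\in K}\int_0^t\tfrac{\partial F}{\partial x_2(k)}(u_s,v_s,y_1,y_2)\sqrt{\gamma u_s(k)v_s(k)}\,dB^2_s(k).
\]
This identifies $M^{\varrho,\gamma}_t$ with a sum of stochastic integrals, hence a local martingale that vanishes at $t=0$.

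To upgrade local martingale to martingale I would use that $|F|\le 1$ (its logarithm $\langle\langle u_t,v_t,y_1,y_2\rangle\rangle_\varrho$ has non-positive real part because $u_t,v_t,y_1,y_2\ge 0$ and $\sqrt{1-\varrho}\ge 0$), so that the integrand of each stochastic integral, being $F$ times a bounded linear combination of the $y_i(k)$'s times $\sqrt{\gamma u_s(k)v_s(k)}$, has its square summed in $k\in K$ integrable on $[0,T]$ using the moment bound (\ref{abc})-type estimate on $\mathbb E[u_s(k)v_s(k)]$ available from Section~\ref{sec:uni}. Localization by the stopping times $T^n_N$ used in the existence proof and dominated convergence then yield the true martingale property.

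The main obstacle is in fact nothing more than checking the integrability needed to pass from a local to a true martingale; the algebraic heart of the statement is the second identity of Lemma \ref{l6}, which was engineered precisely so that the $\varrho$-dependent correlation in the noise produces a second-order term proportional to $y_1(k)y_2(k)$ and therefore drops out on $E^S$. This is the reason the square-roots $\sqrt{1\pm\varrho}$ appear in the definition of $\langle\langle\cdot,\cdot,\cdot,\cdot\rangle\rangle_\varrho$.
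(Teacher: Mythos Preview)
Your proof is correct and follows essentially the same route as the paper's own proof: apply It\^o's formula using the compact support of $(y_1,y_2)$, identify the first-order drift with the compensator via Lemma \ref{l6}, kill the second-order drift by the identity $y_1(k)y_2(k)=0$ on $E^S$, and upgrade the resulting local martingale using $|F|\le 1$ together with local boundedness of $\E[u_s(k)v_s(k)]$. The only cosmetic difference is that the paper invokes the moment duality (Lemma \ref{la:mdual}) for the mixed-moment bound rather than the approximation estimate (\ref{abc}), and does not pass through the localizing stopping times $T^n_N$; with $|F|\le 1$ and $K$ finite, square-integrability of the stochastic integrands on $[0,T]$ follows directly.
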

	\begin{proof}
		Noting again that all infinite sums are, in fact, finite as the test-sequences $y_1$ and $y_2$ have compact support, we may apply It\^o's formula to the finite set of stochastic differential equations to get
		\begin{align*}
			&\quad F(u_t,v_t,y_1,y_2)\\
			&=F(u_0,v_0,y_1,y_2)+\sum_{k\in S}\int_0^t\frac{\partial}{\partial x_1(k)}F(u_s,v_s,y_1,y_2)\,du_s(k)\\
			&\quad+\sum_{k\in S}\int_0^t\frac{\partial}{\partial x_2(k)}F(u_s,v_s,y_1,y_2)\,dv_s(k)
			+\frac{1}{2} \sum_{k\in S}\int_0^t\frac{\partial^2}{\partial x_1(k)^2}F(u_s,v_s,y_1,y_2)d\langle u_\cdot(k)\rangle_s\\
			&\quad+\frac{1}{2}\sum_{k\in S}\int_0^t \frac{\partial^2}{\partial x_2(k)^2}F(u_s,v_s,y_1,y_2)d\langle v_\cdot(k)\rangle_s
			+ \sum_{k\in S}\int_0^t\frac{\partial^2}{\partial x_1(k)\partial x_2(k)}F(u_s,v_s,y_1,y_2)d\langle u_\cdot(k),v_\cdot(k)\rangle_s,
		\end{align*}
		where we used that by definition the Brownian motions at different sites are independent. 
The correlation structure for the Brownian motions at the same sites and the previous lemma yield equality of the above expression to
		\begin{align*}
			& F(u_0,v_0,y_1,y_2)+\text{local mart.}
			+\int_0^tF(u_s,v_s,y_1,y_2) \langle \langle \mathcal A u_s,0,y_1,y_2 \rangle\rangle_\varrho\,ds
			+\text{local mart.}\\
			&+\int_0^tF(u_s,v_s,y_1,y_2) \langle \langle 0, \mathcal A v_s,y_1,y_2 \rangle\rangle_\varrho\,ds
			+\int_0^tF(u_s,v_s,y_1,y_2)\sum_{k\in S}4(1-\varrho^2)y_1(k)y_2(k)\gamma u_s(k)v_s(k)\,ds.
		\end{align*}
		Sorting the terms leads to
		\begin{align*}
			 F(u_t,v_t,y_1,y_2)&=F(u_0,v_0,y_1,y_2)+\text{local mart}.+\int_0^t F(u_s,v_s,y_1,y_2)\langle \langle \mathcal A u_s, \mathcal A v_s, y_1,y_2\rangle \rangle_\varrho\,ds\\
			&\quad+\int_0^tF(u_s,v_s,y_1,y_2)\sum_{k\in S}4(1-\varrho^2)y_1(k)y_2(k)\gamma u_s(k)v_s(k)\,ds.
		\end{align*}
		By assumption $y\in L^{f,E}$ so that $y_1(k)y_2(k)=0$ for all $k\in S$. Hence, the last summand vanishes and it only remains to show that the local martingale is a martingale. But this follows directly from the fact that $F$ is 		bounded and the moments $\E[u_t(k)v_t(k)]$ are locally bounded. The latter follows for instance from the moment duality of Lemma \ref{la:mdual}.
	\end{proof}
	It would be desirable to uniquely define solutions of finite rate symbiotic branching processes via this martingale property which unfortunately is impossible: the corresponding martingale problem does not involve $\gamma$ and it is satisfied by $\textrm{SBM}_\gamma(\varrho)$ for arbitrary $\gamma$. As symbiotic branching processes for different branching rates do not coincide in law, the martingale problem has infinitely many solutions.\\
	However, the class of processes on the restricted state-space $L_\beta^{2,E}$ is less rich so that the small class of test-functions suffices here for the martingale problem to be well-posed. In particular, the restriction rules out all solutions of $\textrm{SBM}_\gamma$. Here is the generalization from $\varrho=0$ to $\varrho\in (-1,1)$ of Proposition 4.1 of \cite{KM10b}.
	
	\begin{thm}\label{pro:1}
		Let $\varrho\in (-1,1)$, then there is a unique solution to the following martingale problem: For all initial conditions $(x_1,x_2)\in L_\beta^{2,E}$, there exists a process $(U,V)$ with paths in  
		$D([0,\infty),L_\beta^{2,E})$ such that for all 
		test-sequences $(y_1,y_2)\in L^{f,E}$ the process
		\begin{align}\label{mp}
			M^{\varrho,\infty}_t(x,y)
			&:=F(U_t,V_t, y_1,y_2)-F(x_1,x_2, y_1,y_2)-\int_0^t\langle \langle \mathcal A U_s,\mathcal A V_s,y_1,y_2\rangle\rangle_\varrho F(U_s,V_s,y_1,y_2)\,ds 
		\end{align}
		is a martingale null at zero. The induced law on $D\big([0,\infty),L_\beta^{2,E}\big)$ constitutes a strong Markov family and the corresponding strong Markov process will be called infinite rate symbiotic branching {$\textrm{SBM}_\infty(\varrho)$}.
	\end{thm}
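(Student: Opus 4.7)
For existence, the plan is to take $(U,V)$ as a Meyer-Zheng subsequential limit of the finite-rate processes $(u^\gamma,v^\gamma)=\textrm{SBM}_\gamma(\varrho)$ started from the same $(x_1,x_2)\in L_\beta^{2,E}\subset L_\beta^2$. The key observation, already present in the proof of Proposition \ref{prop:mart1}, is that $M^{\varrho,\gamma}_t(x_1,x_2,y_1,y_2)$ has \emph{no} $\gamma$-dependence once $(y_1,y_2)\in L^{f,E}$ because the branching correction $4(1-\varrho^2)\sum_k y_1(k)y_2(k)\gamma u^\gamma_s(k)v^\gamma_s(k)$ vanishes identically. Hence each finite-rate process already solves the candidate martingale problem \eqref{mp}; the only thing to verify is that some limit lands in $L_\beta^{2,E}$. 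Tightness in the pseudo-path topology will follow from the uniform (in $\gamma$) conditional-variation bounds derived in the proof of Theorem \ref{ex:dis}, combined with the mild-form representation of Proposition \ref{prop:mild}.

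Along a subsequence $\gamma_n\to\infty$, the time-change identity of Lemma \ref{la:eds} together with the exit-point/exit-time equivalence of Lemma \ref{ete} shows that $\gamma_n\int_0^t\langle u^{\gamma_n}_s,v^{\gamma_n}_s\rangle\,ds$ is stochastically bounded by the exit-time $\tau$ of a planar $\varrho$-correlated Brownian motion. Passing to the limit then forces $\int_0^t U_s(k)V_s(k)\,ds=0$ for every $k$ and $t$, so a c\`adl\`ag version of $(U,V)$ lives in $L_\beta^{2,E}$. To transport \eqref{mp1} to the limit we invoke the Meyer-Zheng criterion for convergence of martingales (Theorem 11 of \cite{MZ}, see also \cite{J97}); boundedness of $F$ and a uniform moment control on $\langle\langle \mathcal A u^{\gamma_n}_s,\mathcal A v^{\gamma_n}_s,y_1,y_2\rangle\rangle_\varrho$ from the tightness estimates take care of uniform integrability.

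For uniqueness, the plan is to lift the finite-rate self-duality \eqref{la:selfdual} to the martingale problem. Given two solutions $(U^i,V^i)$, $i=1,2$, from the same $(x_1,x_2)\in L_\beta^{2,E}$, together with a third solution $(\tilde U,\tilde V)$ from compactly supported $(y_1,y_2)\in L^{f,E}$, the goal is to prove
\[
\E^{x_1,x_2}\!\bigl[F(U^i_t,V^i_t,y_1,y_2)\bigr]=\E^{y_1,y_2}\!\bigl[F(x_1,x_2,\tilde U_t,\tilde V_t)\bigr],\qquad i=1,2.
\]
Independence of the right-hand side from $i$ forces both solutions to share the same Laplace--Fourier transform of $(U^i_t+V^i_t,U^i_t-V^i_t)$ against arbitrary compactly supported $(\phi,\psi)$, and then the inversion argument from the proof of Lemma \ref{uniq} recovers one-dimensional laws; Markov extension pins down all finite-dimensional distributions on $L_\beta^{2,E}$. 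The strong Markov statement then follows from well-posedness of a c\`adl\`ag martingale problem in the standard way (Theorem 4.4.2 of \cite{EK86}).

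The hardest step will be justifying the displayed self-duality purely from the martingale problem. The finite-rate duality of \cite{EF04} is proved by It\^o calculus on the continuous semimartingales $(u^\gamma,v^\gamma)$, but our limiting objects are only c\`adl\`ag and characterized by \eqref{mp}. The natural strategy is to fix $T>0$ and show that $t\mapsto\E\!\left[F(U^i_t,V^i_t,\tilde U_{T-t},\tilde V_{T-t})\right]$ is constant on $[0,T]$, by plugging the martingale problem for both $(U^i,V^i)$ and $(\tilde U,\tilde V)$ into a Fubini-type argument exploiting the symmetry of $\mathcal A$ via $\langle\langle \mathcal A x_1,\mathcal A x_2,y_1,y_2\rangle\rangle_\varrho=\langle\langle x_1,x_2,\mathcal A y_1,\mathcal A y_2\rangle\rangle_\varrho$. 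Verifying enough integrability to make this manipulation rigorous is the technical core of the proof; once in place, the $\varrho=0$ argument of \cite{KM10b} transfers essentially line by line with $p(0)=2$ replaced throughout by $p(\varrho)>1$.
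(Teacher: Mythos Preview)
Your proposal is correct, and the uniqueness half matches the paper's approach closely: both derive the self-duality
\[
\E^{x_1,x_2}\bigl[F(U_t,V_t,\tilde U_0,\tilde V_0)\bigr]=\E^{\tilde U_0,\tilde V_0}\bigl[F(x_1,x_2,\tilde U_t,\tilde V_t)\bigr]
\]
directly from the martingale problem, then invoke the Laplace/Fourier inversion of Lemma~\ref{uniq} and the Ethier--Kurtz extension to finite-dimensional marginals. Your identification of the ``hard step'' (making the $t\mapsto T-t$ Fubini manipulation rigorous via symmetry of $\mathcal A$ and moment bounds) is exactly what the paper singles out, with the $\varrho=0$ argument of \cite{KM10b} carried over by replacing $p(0)=2$ with $p(\varrho)>1$.

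The existence argument, however, goes by a genuinely different route. You construct a solution as a Meyer--Zheng subsequential limit of $\textrm{SBM}_\gamma$ as $\gamma\to\infty$, which is essentially the content of Theorem~\ref{thm:2}. The paper instead builds solutions to the martingale problem \emph{from the Poissonian integral equations}~(\ref{eqn:st}): Theorem~\ref{exis} constructs weak solutions to the jump SPDE by an interlacing/truncation scheme, and Proposition~\ref{thm:s} shows such solutions satisfy~(\ref{mp}). Your route is more economical given the finite-rate theory already in hand, and avoids the jump-diffusion machinery entirely. The paper's route is more constructive: it produces the explicit Poissonian representation from which the generalized-voter interpretation and Theorem~\ref{thm:111} are read off directly, and it also supplies the concrete compactly-supported dual process $(\tilde U,\tilde V)$ used in Step~1 of the uniqueness proof (with first-moment decay coming from the SDE estimates rather than from the martingale problem alone). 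One small caution on your version: the reference to the bounds of Theorem~\ref{ex:dis} for tightness is slightly off, since those are at fixed $\gamma$; the uniform-in-$\gamma$ control you need is the $p$th-moment bound~(\ref{amo}) with $p\in(1,p(\varrho))$, which rests on Lemma~\ref{ete} rather than on the Gronwall-type estimates of Theorem~\ref{ex:dis}.
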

	We postpone a sketch of a proof to Section \ref{sec:jumpsde} where solutions are constructed by means of the Poissonian equations already mentioned in Theorem \ref{0}.

\smallskip

Since we discussed extensively the longtime behavior of finite rate symbiotic branching processes we say a few words about the longtime behavior of infinite rate symbiotic branching processes. The case 
of $\varrho=0$ has been studied in~\cite{KM10a} and some sufficient conditions for coexistence and impossibility 
of coexistence have been derived there. For $\varrho<0$ a full recurrence/transience dichotomy has been established in~\cite{DM11} in the spirit of the results presented in Section \ref{longl}.
\begin{prop} 
\label{prop:110}
		Let $\varrho<0$, then coexistence of types for $\textrm{SBM}_\infty(\varrho)$ is possible if and 
only if a Markov process 
 on $S$ with $Q$-matrix ${\mathcal A}$ is transient. 
\end{prop}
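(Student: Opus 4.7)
The plan is to mirror the strategy used for $\textrm{SBM}_\gamma(\varrho)$ in Proposition \ref{prop:101} (following \cite{DM11}), adapting each step to the infinite rate setting. The central observation is that, thanks to $\varrho<0$ and the critical moment curve of Theorem \ref{thm:curve}, one has $p(\varrho)>2$ so that, uniformly in $\gamma$, the total mass processes $\langle u_t^\gamma,\1\rangle$ and $\langle v_t^\gamma,\1\rangle$ have finite second moments. This uniform moment control is the key ingredient that allows the arguments of the finite rate case to pass through in the limit.

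First I would establish that, for summable initial conditions in $L^{\Sigma,E}$, the total masses $\langle U_t,\1\rangle$ and $\langle V_t,\1\rangle$ are non-negative martingales under $\textrm{SBM}_\infty(\varrho)$. This follows from the Poissonian representation (\ref{UV}): summing both equations over $k\in S$ makes the drift terms vanish because $\mathcal A$ is symmetric with $\sum_k \mathcal A f(k)=0$, while the compensated Poissonian integrals remain martingales. Non-negative martingale convergence then produces almost sure limits $\langle U_\infty,\1\rangle$ and $\langle V_\infty,\1\rangle$, so coexistence becomes the question of whether $\P(\langle U_\infty,\1\rangle\langle V_\infty,\1\rangle>0)>0$.

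For the transient direction, choose finitely supported $(U_0,V_0)\in L^{f,E}$ so that Proposition \ref{prop:101} guarantees coexistence for every finite $\gamma$. Using Theorem \ref{0} together with the uniform-in-$\gamma$ moment bound from Theorem \ref{thm:curve}, the joint laws of $\big(\langle u^\gamma_t,\1\rangle,\langle v^\gamma_t,\1\rangle\big)$ are tight and converge (along subsequences, by Meyer-Zheng) to those of $\big(\langle U_t,\1\rangle,\langle V_t,\1\rangle\big)$ for almost every $t$. A diagonal argument in $t\to\infty$ and $\gamma\to\infty$, combined with the uniform integrability coming from the $L^p$ bound for some $p>2$, then transfers the positive probability of $\langle u_\infty^\gamma,\1\rangle\langle v_\infty^\gamma,\1\rangle>0$ to the infinite rate limit.

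For the reverse direction, assume $\mathcal A$ generates a recurrent Markov process and aim to show $\P\big(\langle U_\infty,\1\rangle\langle V_\infty,\1\rangle>0\big)=0$. The strategy is a direct second moment computation for $\textrm{SBM}_\infty(\varrho)$: apply the jump It\^o formula to the product $U_t(i)V_t(j)$ via the Poisson representation (\ref{UV}) and take expectations, producing a system of equations for $\E[U_t(i)V_t(j)]$ driven by the transition kernel of $\mathcal A$. For $\varrho<0$ the contribution of the jump term has a favorable sign which, combined with recurrence of $\mathcal A$, forces $\E[\langle U_t,V_t\rangle]\to 0$ and hence rules out coexistence. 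I expect the recurrent direction to be the main obstacle: the explicit jump structure, in particular the dependence of the intensity $I_s(k)$ on $\mathcal A U_s$, $\mathcal A V_s$, $U_s$ and $V_s$, makes the second moment evolution less clean than in the continuous finite rate case, and careful bookkeeping is needed to verify that negative $\varrho$ produces the same sign pattern on which the \cite{DM11} argument relies.
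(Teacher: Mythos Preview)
The paper itself does not prove this proposition; it simply refers to \cite{DM11}. So there is no ``paper's proof'' to compare against, and your proposal must be judged on its own merits.

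Your high-level instinct is right: $\varrho<0$ gives $p(\varrho)>2$, so second moments are available, and the dichotomy should come from controlling the mixed moments $\E[U_t(i)V_t(j)]$. But there are two genuine gaps.

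\textbf{Recurrent direction.} You write that the second moment equations ``force $\E[\langle U_t,V_t\rangle]\to 0$ and hence rule out coexistence''. But for $\textrm{SBM}_\infty$ the defining feature of the state space $L_\beta^{2,E}$ is that $U_t(k)V_t(k)=0$ for every $k$ and every $t$, so $\langle U_t,V_t\rangle\equiv 0$ identically. This is not a technicality: it is precisely why the finite rate non-coexistence argument (via the time change $T(t)=\gamma\int_0^t\langle u_s,v_s\rangle\,ds$ and the Brownian representation of Lemma \ref{la:eds}) does not carry over. What you presumably need is $\E[\langle U_t,\1\rangle\langle V_t,\1\rangle]=\sum_{i,j}\E[U_t(i)V_t(j)]\to 0$, and then Fatou (or uniform integrability from your $L^p$ bound with $p>1$) gives $\E[\langle U_\infty,\1\rangle\langle V_\infty,\1\rangle]=0$. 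The system of equations you derive for $\E[U_t(i)V_t(j)]$ from the Poissonian representation is the right object, but you must analyse it directly rather than invoke a quantity that vanishes by construction.

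\textbf{Transient direction.} Your ``diagonal argument in $t\to\infty$ and $\gamma\to\infty$'' is not a proof. Meyer--Zheng convergence only gives convergence of the finite rate processes to $(U,V)$ along a set of full Lebesgue measure in $t$; it says nothing about $t=\infty$. Knowing that coexistence holds with positive probability for each finite $\gamma$ does not prevent those probabilities from tending to zero as $\gamma\to\infty$. You would need a quantitative lower bound on $\P(\langle u^\gamma_\infty,\1\rangle\langle v^\gamma_\infty,\1\rangle>0)$ that is uniform in $\gamma$, and you have not indicated where that comes from. A cleaner route, and the one actually taken in \cite{DM11}, is to work directly with the infinite rate process: use the Poissonian SPDE to compute the evolution of $\E[U_t(i)V_t(j)]$, show that in the transient case the sum $\sum_{i,j}\E[U_t(i)V_t(j)]$ stays bounded away from zero, and combine this with $L^2$-convergence of the total-mass martingales (available since $p(\varrho)>2$) to conclude $\E[\langle U_\infty,\1\rangle\langle V_\infty,\1\rangle]>0$.
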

Note that this proposition extends Proposition~\ref{prop:101} to $\gamma=\infty$ on a general countable site space $S$ and an arbitrary 
 symmetric Markov process with $Q$-matrix ${\mathcal A}$. For the proof we refer the reader  to~\cite{DM11}.
	
\subsection{Main Limit Theorem}
	So far we have discussed the finite rate symbiotic branching processes and introduced the well-posed martingale problem from which one can define the family of processes $\textrm{SBM}_\infty(\varrho)$, $\varrho\in (-1,1)$. To get the link between the two, we sketch in this section how to show that $\textrm{SBM}_\gamma$ converges in some weak sense to the solution $\textrm{SBM}_\infty$ of the martingale problem (\ref{mp}) as $\gamma$ goes to infinity. This, in fact, justifies to call the processes of Theorem \ref{pro:1} infinite rate symbiotic branching processes.
\smallskip

	Unfortunately, the convergence of $\textrm{SBM}_\gamma$ to $\textrm{SBM}_\infty$ will not hold in the convenient Skorohod topology in which continuous processes converge to continuous processes. As a solution of the system 	of Brownian equations (\ref{ss}), $\textrm{SBM}_\gamma$ is continuous, whereas $\textrm{SBM}_\infty$ is non-continuous as solution to the system of Poissonian equations.
	\smallskip
	
	Even though the convergence can not hold in the Skorohod topology, it holds in some weaker sense. The suitable ``pseudo-path" topology on the Skorohod space of RCLL functions was introduced in \cite{MZ}. 	The topology is much weaker than the Skorohod topology and is, in fact, equivalent to convergence in measure (see Lemma 1 of \cite{MZ} and also results in \cite{J97}). Sufficient 	(but not necessary) tightness conditions for this ``pseudo-path" topology were given in \cite{MZ}. In particular, these conditions are convenient to check the tightness of semimartingales.
	\smallskip
	
	Here is the extension of Theorem 1.5 of \cite{KM10b} to $\varrho\neq 0$.
\begin{thm}\label{thm:2}
	Fix any $\varrho\in (-1,1)$. Suppose that for any $\gamma>0$, $(u^{\gamma}_t,v^{\gamma}_t)_{t\geq 0}$ solves $\textrm{SBM}_{\gamma}(\varrho)$ and the initial conditions $(u^\gamma_0,v^\gamma_0)=(U_0,V_0)\in L^{2,E}_\beta$ do not depend on $\gamma$. Then, for any sequence $\gamma_n$ tending to infinity, we have the convergence in law
	\begin{align*}
		(u^{\gamma_n},v^{\gamma_n})\Longrightarrow (U,V),\quad n\to\infty,
	\end{align*}
	in $D([0,\infty),L_\beta^2)$ equipped with the Meyer-Zheng ``pseudo-path" topology. Here, $(U,V)$ is the unique solution of the martingale problem of Theorem \ref{pro:1}.
\end{thm}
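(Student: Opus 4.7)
The plan is to prove Theorem~\ref{thm:2} by the classical three-step strategy: establish tightness of the family $(u^{\gamma_n}, v^{\gamma_n})$ in the Meyer-Zheng pseudo-path topology, identify any subsequential limit as a solution of the martingale problem (\ref{mp}) from Theorem~\ref{pro:1}, and invoke well-posedness of that martingale problem to conclude uniqueness of the limit.

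For tightness, the decisive advantage of the Meyer-Zheng topology is that the standard tightness criterion for RCLL semimartingales requires only uniform bounds on the conditional variations of the finite-variation part and on an appropriate supremum functional. Coordinate-wise, $u_t^\gamma(k)$ is a semimartingale with drift $\int_0^t \mathcal{A} u_s^\gamma(k)\,ds$ and quadratic variation $\gamma\int_0^t u_s^\gamma(k)v_s^\gamma(k)\,ds$; analogously for $v_t^\gamma(k)$. I would control the drift piece via the moment bounds of Theorem~\ref{thm:curve}, which give $\E[u_t^\gamma(k)^p]$ bounded uniformly in $\gamma$ and $t$ for some $p>1$. To bound $\gamma\int_0^t u_s^\gamma(k)v_s^\gamma(k)\,ds$ uniformly in $\gamma$, the cleanest route is via the self-duality (\ref{la:selfdual}): test against compactly supported $(y_1,y_2)\in L^{f,E}$ to reduce questions about $(u^\gamma,v^\gamma)$ started at a general $(U_0,V_0)\in L_\beta^{2,E}$ to the dual $\textrm{SBM}_\gamma$ started at the summable $(y_1,y_2)$, whose total-mass process is by Lemma~\ref{la:eds} a $\varrho$-correlated planar Brownian motion stopped at its exit time $\tau$ from the first quadrant. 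Since $\tau$ has finite $p/2$-moments for $p<p(\varrho)$ by Lemma~\ref{ete}, the total quadratic variation of the dual stays stochastically bounded in $\gamma$, which transfers back via self-duality to the required control on $(u^\gamma,v^\gamma)$.

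For the identification step, the crucial observation is that the martingale $M^{\varrho,\gamma}_t$ in Proposition~\ref{prop:mart1} does not depend on $\gamma$ explicitly: the $4(1-\varrho^2)\,y_1(k)y_2(k)\,\gamma u_s(k)v_s(k)$ term from the It\^o expansion vanishes because $y_1(k)y_2(k)=0$ for $(y_1,y_2)\in L^{f,E}$. Therefore for every $\gamma>0$ the process $(u^\gamma,v^\gamma)$ already satisfies the same functional martingale identity that defines $\textrm{SBM}_\infty$, and it remains only to pass to the limit. Since $F$ is bounded and continuous, $\langle\langle \mathcal{A}u,\mathcal{A}v,y_1,y_2\rangle\rangle_\varrho$ is a finite sum, and Meyer-Zheng weak convergence is equivalent to convergence in measure, standard dominated-convergence arguments together with the uniform moment bounds give $\E[M_t^{\varrho,\infty}(U_0,V_0,y_1,y_2)\,Z]=0$ for bounded test-functionals $Z$ of the past, so any subsequential limit solves (\ref{mp}). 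Before invoking Theorem~\ref{pro:1} one must still verify that the limit actually takes values in $L_\beta^{2,E}$, i.e.\ that $U_t(k)V_t(k)=0$ for every $t$ and $k$ almost surely: this follows from the fact that $\gamma\int_0^T u_s^\gamma(k)v_s^\gamma(k)\,ds$ is stochastically bounded in $\gamma$, forcing $\int_0^T U_s(k)V_s(k)\,ds = 0$ in the limit, which combined with right-continuity of paths promotes nullity of the product from Lebesgue-almost every $t$ to every $t\geq 0$.

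The principal obstacle is exactly this last point: the passage from the uniform quadratic-variation bound to pointwise nullity of $U_tV_t$ in the limit is what converts the continuous prelimit into the discontinuous $E^S$-valued limit, and it relies delicately on the interplay between the duality control at compactly supported test-sequences and the actual (possibly unbounded) initial data $(U_0,V_0)\in L_\beta^{2,E}$. Once this step is done, however, the limit $(U,V)$ is an $L_\beta^{2,E}$-valued solution of the martingale problem (\ref{mp}); by the well-posedness statement of Theorem~\ref{pro:1} its law on $D([0,\infty),L_\beta^{2,E})$ is uniquely determined, hence every subsequence of $(u^{\gamma_n},v^{\gamma_n})$ has the same weak limit, and the whole sequence converges to $(U,V)$ as claimed.
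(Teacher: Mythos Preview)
Your three-step architecture (tightness, identification of limit points as solutions of the martingale problem, uniqueness from well-posedness) is exactly the paper's strategy, and Steps~2 and~3 are essentially correct and match the paper's argument: the key for identification is indeed that the martingale $M^{\varrho,\gamma}$ of Proposition~\ref{prop:mart1} is $\gamma$-free because $y_1(k)y_2(k)=0$ on $L^{f,E}$, and the state-space restriction to $L_\beta^{2,E}$ is obtained from stochastic boundedness of $\gamma\int_0^t u_s^\gamma(k)v_s^\gamma(k)\,ds$ followed by right-continuity.

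Where your sketch is imprecise is Step~1. First, Theorem~\ref{thm:curve} is stated for homogeneous initial conditions $\mathbf{1},\mathbf{1}$, not for general $(U_0,V_0)\in L_\beta^{2,E}$; the paper instead establishes the weighted moment bound
\[
\sup_\gamma \E\Big[\sup_{t\leq T}\langle u_t^\gamma,\beta\rangle^p\Big]<\infty,\qquad p\in(1,p(\varrho)),
\]
by adapting the trick of Lemma~6.1 of \cite{KM10b} to reduce to $\E[\tau^{p/2}]$. Second, your proposed route to the quadratic-variation bound via self-duality is not substantiated: the self-duality (\ref{la:selfdual}) equates Laplace/Fourier transforms, and it is not clear how a bound on the dual's quadratic variation (which is indeed $\leq\tau$ by Lemma~\ref{la:eds}) ``transfers back'' to a bound on $\gamma\int_0^t u_s^\gamma(k)v_s^\gamma(k)\,ds$ for the original process with non-summable initial data. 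The paper instead derives the stochastic boundedness of this quantity \emph{from} the moment bound above (as in Lemma~6.3 of \cite{KM10b}), not the other way around. Finally, note that for the Meyer-Zheng tightness of the martingale part you do not actually need to control the quadratic variation: the Meyer-Zheng criterion for martingales (Theorem~4 and Remark~2 of \cite{MZ}) only requires $L^p$-boundedness of the martingale itself, which follows immediately once the weighted moment bound is in place.
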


\begin{proof}[Sketch of Proof]
	The proof consists of three steps:\\
	\textbf{Step 1:} Tightness in the Meyer-Zheng ``pseudo-path" topology follows from the tightness criteria of \cite{MZ}. To carry this out, one has to show tightness for the drift and the martingale terms  in the definition of $\textrm{SBM}_\gamma$: By standard estimates 	the drift terms are, in fact, tight in the stronger Skorohod topology: this follows from
	\begin{align}\label{amo}
		\sup_\gamma \E\Big[\sup_{t\leq T} \langle u_t^\gamma,\beta\rangle^p\Big]<\infty,\quad p\in (1,p(\varrho)).
	\end{align}
	Apart from the facts that $\beta\neq 1$ and $u_0^\gamma$ is not assumed to be summable this is close to the moment bounds for the total-mass processes that we obtain from Lemma \ref{lem:11} and Theorem \ref{thm:curve}. With the same trick as in Lemma 6.1 of \cite{KM10b}, the lefthand side of (\ref{amo}) can be bounded uniformly in $\gamma$ by a multiple of $\E[\tau^{p/2}]$, where $\tau$ is the exit-time of Theorem \ref{thm:curve}. Replacing $p\in (1,2)$ by $p\in (1,p(\varrho))$, the arguments in the proof of Lemma 6.2 of \cite{KM10b} carry over line by line. The crucial observation is that for all $\varrho\in (-1,1)$ the critical curve $p(\varrho)$ is strictly larger than $1$ which is all that is needed. \\
	To prove tightness of the martingale part, the tightness criteria for martingales can be applied (compare Theorem 4 combined with Remark 2 of \cite{MZ}).\\ 
	\textbf{Step 2:} To show that all limit points indeed solve the martingale problem, we only have to use Proposition \ref{prop:mart1} and some moment estimates. For all fixed $\gamma>0$ the same martingale problem is fulfilled so that it comes as no 	surprise that the martingale problem is fulfilled in the limit if one can show that the martingales converge to a martingale. But this follows from the same estimates that are used for the tightness proof involving crucially the critical curve $p(\varrho)$.\\
	\textbf{Step 3:} In the previous section we stressed out that the martingale problem (\ref{mp}) is only well-posed if the involved process takes values in the restricted space $L_\beta^{2,E}$. To show that for any limit point $(U,V)$, we indeed have $U_t(k)V_t(k)=0$ for all $t\geq 0,k\in S$, one can show that almost surely
	\begin{align}\label{lll}
		\int_0^t(U_s(k) V_s(k)\wedge 1)\,ds=0,\quad \forall t\geq 0,k\in S,
	\end{align}
	since then, by right-continuity, $(U_t(k),V_t(k))\in E$ for all $t\geq 0, k\in S$. By tightness of step 1 one can easily derive stochastic boundedness of $\gamma\int_0^t u^\gamma_s v^\gamma_s\,ds$ uniformly in $\gamma$ for any $k\in S$ as in the proof of Lemma 6.3 in \cite{KM10b} from which (\ref{lll}) follows by taking into account that convergence in the Meyer-Zheng ``pseudo-path" topology is equivalent to convergence in (Lebesgue) measure.	
	\end{proof}
Now that above we have made precise sense of $\textrm{SBM}_\infty$ in terms of a weak limit of $\textrm{SBM}_\gamma$ that solves a well-posed martingale problem, the next two sections are devoted to constructions that shed more light on the properties of the processes. 

\subsection{Trotter Type Construction}
	A very different perspective for $\textrm{MCB}_\infty$ was presented in \cite{KO10}. Their main idea was to combine ``by hands" the precise  infinite rate limit for the mutually catalytic SDE (\ref{sde}) with the heatflow corresponding to the generator $\mathcal A$, to construct a more instructive approximation. The approximation converges in the stronger Skorohod topology, instead of only in the weaker Meyer-Zheng ``pseudo-path" topology, which might be helpful to deduce properties for the limiting process. We now briefly discuss here how their approach extends to $\textrm{SBM}_\infty(\varrho)$ for $\varrho\in (-1,1)$.\\
	Separating the deterministic and stochastic terms in the very definition of $\textrm{SBM}_\gamma$, one has to consider the pair of evolution equations
		\begin{align}\label{jj}
			\frac{\partial }{\partial t}u_t=\mathcal A u_t,\qquad \frac{\partial }{\partial t}v_t=\mathcal Av_t
		\end{align}
		and the set of independent two-dimensional symbiotic branching processes
		\begin{align}\label{j}
			du_t(i)=\sqrt{\gamma u_t(i)v_t(i)}\,dB^1_t(i),\qquad dv_t(i)=\sqrt{\gamma u_t(i)v_t(i)}\,dB^2_t(i).
		\end{align}
		The evolution equations (\ref{jj}) can be solved explicitly in terms of the semigroup $P_t$ corresponding to $\mathcal A$ (recall (\ref{SG}) for $\mathcal A=\Delta$) and the solutions do not depend on the branching rate $\gamma$.
		The processes in (\ref{j}) obey a more interesting behavior as we have discussed in the introduction of this section: the pairs of independent stochastic integrals provide a set of independent diffusions indexed by $S$ which, for $\gamma=\infty$, correspond to a set of independent choices of the exit-law $Q^\varrho$.
		
		The Trotter type approach to $\textrm{SBM}_\infty$ is built upon these two explicit representations: the heatflow based on $\mathcal A$ and the independent choices of the exit-law $Q^\varrho$ are alternated with increasing frequency. For $\epsilon>0$, the approximating 			processes $(U^\epsilon,V^\epsilon)$ are defined as follows:
		\begin{itemize}
			\item[i)] Within each interval $[n\epsilon, (n+1)\epsilon)$, $(U_t^\epsilon,V_t^\epsilon)$ is the explicit (deterministic) solution of (\ref{jj}) with initial condition $(U^\epsilon_{n\epsilon},V^\epsilon_{n\epsilon})$.
			\item[ii)] At times $n\epsilon$, $(U^\epsilon_{n\epsilon-},V^\epsilon_{n\epsilon-})$ is replaced at each site $k\in S$ independently by a point in $E$ chosen from the exit-law $Q^\varrho$ of $\varrho$-correlated Brownian motions started at $\big(U^{\epsilon}_{n \epsilon -}(k),V^{\epsilon}_{n \epsilon -}(k)\big)$.
		\end{itemize}
		It follows almost directly from the definition of the approximation that for any $\epsilon>0$ fixed, $(U^\epsilon,V^\epsilon)$ is a solution to the martingale problem (\ref{mp}). As discussed below Proposition \ref{prop:mart1} this does not cause 		any contradiction since even with initial condition $(U_0,V_0)\in L_\beta^{2,E}$, the processes $(U^\epsilon,V^\epsilon)$ take values in $L_\beta^2$ but not in the restricted state-space $L_\beta^{2,E}$. The adaption of the main result of \cite{KO10} is then as follows.
		\begin{thm}\label{thm:trotter}
			Suppose $\varrho\in(-1,1)$ and $(U_0,V_0)\in L_\beta^{2,E}$. Then, as $\epsilon$ tends to zero, the family $\{(U^\epsilon,V^\epsilon)\}_{\epsilon>0}$  converges weakly, in the Skorohod topology on $D([0,\infty),L_\beta^2)$, to $\textrm{SBM}_\infty(\varrho)$.
		\end{thm}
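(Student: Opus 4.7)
The plan is to adapt the argument of \cite{KO10} for $\textrm{MCB}_\infty$ (the case $\varrho=0$), with two ingredients replaced by their $\varrho$-correlated analogues: the planar Brownian exit-law is replaced by $Q^\varrho$ and the critical exponent $p(0)=2$ is replaced by $p(\varrho)>1$ from Lemma~\ref{ete}. The argument splits into four standard steps: (i) tightness of $\{(U^\epsilon, V^\epsilon)\}$, (ii) identification of any weak limit as a solution of the martingale problem (\ref{mp}), (iii) verification of the restricted state-space $(U,V) \in L^{2,E}_\beta$, and (iv) uniqueness via Theorem~\ref{pro:1}, which pins down the weak limit as $\textrm{SBM}_\infty(\varrho)$.

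For step (i), the first task is to establish the uniform moment bound
\[
\sup_{\epsilon>0} \E\Big[\sup_{t \leq T} \langle U^\epsilon_t + V^\epsilon_t,\beta\rangle^p\Big] < \infty, \qquad p \in (1, p(\varrho)).
\]
Between jumps the $\beta$-weighted total mass grows at most exponentially, thanks to the defining property $\sum_i \beta(i) |a(i,k)| \leq M \beta(k)$; at each jump time $n\epsilon$ a fresh sample from $Q^\varrho$ is drawn independently at each site, whose $p$-th moment is controlled by $\E[\tau^{p/2}] < \infty$ via Lemma~\ref{ete}. Iterating these bounds and exploiting the martingale property of the total mass inherited across jumps furnishes the uniform control, in the spirit of Lemma~6.1 of \cite{KM10b} and Step~1 of Theorem~\ref{thm:2}. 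Together with the fact, noted right after the definition of the approximation, that $(U^\epsilon, V^\epsilon)$ solves (\ref{mp}) for every $\epsilon>0$, the Aldous--Rebolledo criterion then yields tightness in the Skorohod $J_1$-topology on $D([0,\infty), L^2_\beta)$.

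For step (ii), the uniform integrability from step (i) allows passage to the limit inside (\ref{mp}) along any convergent subsequence, so any subsequential limit $(U, V)$ also satisfies (\ref{mp}). For step (iii), the key quantitative estimate is
\[
\E\Big[\int_0^T U^\epsilon_s(k) V^\epsilon_s(k) \wedge 1 \, ds\Big] \leq C(T,k)\,\epsilon,
\]
because between consecutive jumps only a heat-flow interval of length $\epsilon$ is available to push the state off $E$. Sending $\epsilon \to 0$ and appealing to right-continuity forces $U_t(k) V_t(k) = 0$ for every $t \geq 0$ and $k \in S$, so $(U,V)$ takes values in $L^{2,E}_\beta$. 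Theorem~\ref{pro:1} then identifies $(U,V)$ as $\textrm{SBM}_\infty(\varrho)$ and promotes subsequential convergence to full weak convergence.

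The main obstacle is upgrading the tightness from the weak Meyer--Zheng topology (as in Theorem~\ref{thm:2}) to the Skorohod $J_1$-topology. The approximating processes jump only on the deterministic grid $\{n\epsilon\}$, while $\textrm{SBM}_\infty(\varrho)$ jumps at the random atoms of the Poisson measure $\mathcal N$; to reconcile these one must show that the overwhelming majority of the grid-times produce negligible jumps, since heat flow over time $\epsilon$ moves a point on $E$ only slightly into $L^2_\beta \setminus E$, and a fresh exit-law $Q^\varrho_{u,v}$ started near $E$ returns a point close to its starting state with high probability. Quantifying this small-perturbation property uniformly on bounded time intervals and across all sites, so that the genuinely macroscopic jumps form a locally finite point set that asymptotically matches the Poisson atoms of $\mathcal N$, is the delicate step where the extension from \cite{KO10} to $\varrho \neq 0$ requires the full strength of the estimates coming from the explicit density formulas (\ref{h1})--(\ref{h3}).
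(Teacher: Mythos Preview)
Your steps (i)--(iv) are correct and match the paper's three-step sketch: Aldous' criterion with $p$th-moment bounds coming from Lemma~\ref{ete}, identification through the martingale problem (\ref{mp}) which the $(U^\epsilon,V^\epsilon)$ already solve for every $\epsilon>0$, verification of the restricted state space, and uniqueness from Theorem~\ref{pro:1}.

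Your final paragraph, however, overcomplicates the picture and mislocates the difficulty. Neither the paper nor \cite{KO10} attempts to match the grid-time jumps of $(U^\epsilon,V^\epsilon)$ to the Poisson atoms of $\mathcal N$; identification is carried out entirely through the martingale problem and its well-posedness on $L^{2,E}_\beta$, not via any pathwise jump correspondence. Likewise, Aldous' criterion needs only the uniform moment control you already secured in step~(i), not an asymptotic description of a macroscopic-jump point set. The paper's Step~3 is also more direct than your integral estimate: it simply notes that by construction the approximation lies in $E^S$ immediately after each grid time and that the heat flow is continuous, so the limit inherits $E^S$-values.
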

		\begin{proof}[Sketch of Proof]
			The proof consists of three steps:\\
			\textbf{Step 1:} Tightness in the Skorohod topology is proved via Aldous' criterion and moment estimates that are based on estimates for the exit-measures $Q^\varrho$. The extension from $\varrho=0$ to $\varrho\in (-1,1)$ is crucially built upon the fact that the estimates of \cite{KO10} are based on boundedness of some moments greater than $1$ and this equally holds for any $\varrho\in(-1,1)$ (see Lemma \ref{ete}).\\
			\textbf{Step 2:} The identification of the limit points is not difficult since the approximating sequence already solves the martingale problem for any $\epsilon>0$. It is only needed to show that the sequence of 			martingales remains a martingale for which again moment estimates based on Lemma \ref{ete} are needed.\\
			\textbf{Step 3:} The limiting process takes values in the smaller space $L_\beta^{2,E}\subset L^2_\beta$ due to the construction and continuity of the heatflow.
		\end{proof}

\subsection{Poissonian Construction}\label{sec:jumpsde}
	Up to now the infinite rate symbiotic branching processes have only been characterized as weak limits of approximating sequences and via an abstract martingale problem. The most explicit construction
 of $\textrm{SBM}_\infty$ is presented here as 	the unique weak solution to a system of Poissonian integral equations from which we deduce the connection to the voter process.
\subsubsection{Jump Measure}
	To describe the jumps of $\textrm{SBM}_\infty$, the following definition is needed:	
	
	\begin{definition}\label{defj}
		Suppose $Q^\varrho_{(u,v)}$ is the exit-measure of $\varrho$-correlated Brownian motions started at $(u,v)$ from the first quadrant (see (\ref{exitd})). Define
		\begin{align*}
			\nu^{\varrho}_{(a,0)}&:=\lim_{\epsilon\to 0} \frac{Q^{\varrho}_{(a,\epsilon)}}{\epsilon},
		\end{align*}
		where the limit is in the vague topology on measures (i.e. integrated against continuous functions with compact support).
	\end{definition}	
	
	Recalling that by definition $Q^\varrho$ is a probability measure for arbitrary initial conditions, after rescaling the measure $\nu^\varrho$ has to be an infinite measure on $E$ equipped with the restricted Borel 
	$\sigma$-algebra. 
	
	\smallskip
	Next, a density for $\nu^\varrho$ will be derived. The core of the work has already been done in \cite{BDE11} where the density for $Q^\varrho$ was calculated (see (\ref{h1})); with this density 
in hand we get the following result. 

	\begin{lem}
		For $\varrho\in(-1,1)$ and $a>0$ the measure $\nu^\varrho$ is absolutely continuous with respect to the two-dimensional Lebesgue measure restricted to $E$ with the density
		\begin{align*}
			\nu_{(a,0)}^\varrho (d(y_1,y_2))=\begin{cases}
				{p(\varrho)^2} a^{p(\varrho)-1} 
\sqrt{1-\varrho^2}\frac{y_1^{p(\varrho)-1}}{\pi \big(y_1^{p(\varrho)}-a^{p(\varrho)}\big)^2}\,dy_1&: y_2=0,\\
				{p(\varrho)^2}a^{p(\varrho)-1}\sqrt{1-\varrho^2}\frac{y_2^{p(\varrho)-1}}{\pi\big(y_2^{p(\varrho)}+a^{p(\varrho)}\big)^2}\,dy_2&: y_1=0,\\
			\end{cases}
		\end{align*}
		where $p(\varrho)=\frac{\pi}{\theta(\varrho)}$ and $\theta(\varrho)=\frac{\pi}{2}+\arctan\Big(\frac{\varrho}{\sqrt{1-\varrho^2}}\Big)$.
	\end{lem}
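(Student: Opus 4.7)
The plan is to compute the vague limit directly from the explicit densities of $Q^\varrho_{(u,v)}$ recorded in~\eqref{h1}, specialized to $u=a$ and $v=\epsilon$, and then extract the leading $\epsilon$-term as $\epsilon\downarrow 0$. Writing $p=p(\varrho)=\pi/\theta$ throughout and using the constants $z_1(a,\epsilon), z_2(a,\epsilon)$ from~\eqref{h3}, the key preliminary observation is that at $\epsilon=0$
\[
\arctan\!\Bigl(\tfrac{\epsilon-\varrho a}{\sqrt{1-\varrho^{2}}\,a}\Bigr)\Big|_{\epsilon=0}=-\arctan\!\Bigl(\tfrac{\varrho}{\sqrt{1-\varrho^{2}}}\Bigr),
\]
so the combined angular argument inside the sine and cosine in~\eqref{h3} vanishes. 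Moreover, $a^{2}+(\varrho a)^{2}/(1-\varrho^{2})=a^{2}/(1-\varrho^{2})$. Consequently,
\[
z_{1}(a,0)=\frac{a^{p}}{(1-\varrho^{2})^{p/2}},\qquad z_{2}(a,0)=0.
\]

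Next I would compute $\partial z_{2}/\partial\epsilon$ at $\epsilon=0$ by the chain rule: only the $\sin$-factor contributes (since it vanishes at $\epsilon=0$), and $\frac{\partial}{\partial\epsilon}\arctan\bigl(\tfrac{\epsilon-\varrho a}{\sqrt{1-\varrho^{2}}\,a}\bigr)\bigm|_{\epsilon=0}=\sqrt{1-\varrho^{2}}/a$, whence
\[
\frac{\partial z_{2}}{\partial\epsilon}\bigg|_{\epsilon=0}=\frac{a^{p}}{(1-\varrho^{2})^{p/2}}\cdot p\cdot\frac{\sqrt{1-\varrho^{2}}}{a}=\frac{p\,\sqrt{1-\varrho^{2}}\,a^{p-1}}{(1-\varrho^{2})^{p/2}}.
\]
Inserting this into the density formulas~\eqref{h1}, the factor $z_{2}$ in the numerator is precisely what carries the vanishing first-order $\epsilon$-term, while the denominator $z_{2}^{2}+(\cdots\pm z_{1})^{2}$ converges, uniformly on compacts in $r$ avoiding the point $r=a$ on the $y_{1}$-axis, to
\[
\Bigl(\tfrac{r^{p}\mp a^{p}}{(1-\varrho^{2})^{p/2}}\Bigr)^{2}.
\]
Dividing by $\epsilon$, collecting all factors of $(1-\varrho^{2})$, and letting $\epsilon\downarrow 0$ yields the claimed densities on the two axes; this is a short algebraic simplification once the derivative above is in hand.

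Finally, to obtain vague convergence it suffices to pass the pointwise limit of densities inside integrals against $C_{c}$-test functions on $E$ supported away from $(a,0)$. This follows from a dominated-convergence step: on any compact $K\subset E$ with $K\cap\{(a,0)\}=\emptyset$ the densities $\epsilon^{-1}Q^{\varrho}_{(a,\epsilon)}/d\lambda$ are continuous in $\epsilon\in[0,\epsilon_{0}]$ and bounded above uniformly by the limit density times a constant (one checks this by the explicit form and the uniform lower bound on $(r^{p}\mp a^{p})^{2}+z_{2}^{2}$ over $K\times[0,\epsilon_{0}]$). The main (but essentially bookkeeping) obstacle is the careful tracking of the powers of $(1-\varrho^{2})$ and the factor $p^{2}$ arising from $(\pi/\theta)^{2}$ in the product of $\partial_{\epsilon}z_{2}|_{0}$ with the prefactor $\frac{1}{\pi(1-\varrho^{2})^{p/2}}\cdot p\,r^{p-1}$; no real analytic difficulty appears beyond that.
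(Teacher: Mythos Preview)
Your proposal is correct and follows essentially the same route as the paper: both compute the pointwise limit of the explicit density from~\eqref{h1} at $(u,v)=(a,\epsilon)$ after dividing by $\epsilon$, via the first-order expansion of $z_{2}$ in $\epsilon$ (you phrase it as $\partial_{\epsilon}z_{2}|_{0}$, the paper uses $\sin x\sim x$ and l'H\^opital, which amounts to the same calculation). Your added dominated-convergence justification for the vague limit, and the restriction to test functions supported away from the pole $(a,0)$, is a slight improvement in rigor over the paper's version, which just states ``plugging this calculation into~\eqref{h1},~\eqref{h3} the claim follows.''
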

	\begin{proof}
	By definition of $\nu^\varrho$, all we need to do is to plug-in $(u,v)=(a,\epsilon)$ into the the explicit density given in (\ref{h1}), divide by $\epsilon$ and go to the limit. With the notation used in~(\ref{h1}),  (\ref{h3}) we obtain
			\begin{align*}
			{z}_1(\epsilon) &= \Big( a^2+\frac{(\epsilon-a\varrho )^2}{1-\varrho^2} \Big)^{\frac{\pi}{2\theta(\varrho)}}\!\cos \Big( \frac{\pi}{\theta(\varrho)} \Big(\!\arctan \!\Big({\frac{\epsilon-a\varrho }{\sqrt{1-\varrho^2}a}} \Big)\! +
				\!\arctan \!\Big( \frac{\varrho}{\sqrt{1-\varrho^2}} \Big)\Big)\\
				&\stackrel{\epsilon\to 0}{\rightarrow}a^{p(\varrho)}\Big(\frac{1}{\sqrt{1-\varrho^2} }\Big)^{p(\varrho)}.
			\end{align*}
			Taking into account $\sin(x)\sim x$ at $0$ and l`H\^opital's rule with $\arctan'(x)=1/(1+x^2)$, leads to
			\begin{align*}
			\epsilon^{-1}{z}_2(\epsilon) &= \epsilon^{-1}\Big( a^2+\frac{(\epsilon-a\varrho )^2}{1-\varrho^2} \Big)^{\frac{\pi}{2\theta(\varrho)}}\sin \Big( \frac{\pi}{\theta(\varrho)} \Big( \!\arctan \!\Big( {\frac{\epsilon-a\varrho }{\sqrt{1-\varrho^2}a}} \Big) \!+
				\!\arctan \!\Big( \frac{\varrho}{\sqrt{1-\varrho^2}} \Big)\Big)\Big)\\
				&\stackrel{\epsilon\to 0}{\sim}a^{p(\varrho)}\Big( \frac{1}{1-\varrho^2} \Big)^{\frac{\pi}{2\theta(\varrho)}}\epsilon^{-1}
				\sin \Big( \frac{\pi}{\theta(\varrho)} \epsilon a^{-1}\sqrt{1-\varrho^2} \Big)\\				
				&\stackrel{\epsilon\to 0}{\sim}a^{p(\varrho)-1}\Big( \frac{1}{\sqrt{1-\varrho^2}} \Big)^{p(\varrho)}\frac{\pi}{\theta(\varrho)}\sqrt{1-\varrho^2}.
			\end{align*}		
			Plugging this calculation into (\ref{h1}), (\ref{h3}) the claim follows.
	\end{proof}
	In fact, it will always be sufficient to consider the case $a=1$ by simple scaling as we will see below in Lemma \ref{l}.
	\begin{definition}\label{jumpmeasure}
		The special case for $a=1$ will serve as basic jump measure. We abbreviate
		\begin{align*}
			\nu^\varrho(d(y_1,y_2))=\nu^\varrho_{(1,0)}(d(y_1,y_2))=\begin{cases}
				{p(\varrho)^2\sqrt{1-\varrho^2}}\frac{y_1^{p(\varrho)-1}}{\pi\big(y_1^{p(\varrho)}-1\big)^2}\,dy_1&: y_2=0,\\
				{p(\varrho)^2\sqrt{1-\varrho^2}} \frac{y_2^{p(\varrho)-1}}{\pi\big(y_2^{p(\varrho)}+1\big)^2}\,dy_2&: y_1=0,
				\end{cases}
		\end{align*}
	 	in the sequel.
	\end{definition}

	A quick glance at the explicit density of $\nu^\varrho$ shows that the densities are far from being symmetric for the $x$- and $y$-axis: a pole is found only at $(1,0)$. Moreover, the tail behavior shows that the measure restricted to the $y$-axis is finite and is infinite for the $x$-axis. This comes as no surprise from the definition: starting the correlated Brownian motions in $(1,\epsilon)$ and sending $\epsilon$ to zero forces the Brownian motions to exit the first 		quadrant closer and closer to the point $(1,0)$. The additional factor $1/\epsilon$ then leads to the pole at $(1,0)$.\\

	\smallskip	
	The reduction from $\nu^\varrho_{(a,0)}$ to $\nu^\varrho_{(1,0)}$ is motivated by the following scaling property.
	\begin{lem}\label{l}
		Suppose $f$ maps $E$ to $\R$ continuously, then
		\begin{align*}
			\int_E f(y_1,y_2)\,\nu_{(a,0)}^\varrho(d(y_1,y_2))&=\frac{1}{a}\int_E f(ay_1,ay_2)\,\nu^\varrho (d(y_1,y_2)).
		\end{align*}
	\end{lem}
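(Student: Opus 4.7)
My plan is to prove this purely by a change-of-variables argument, using the explicit density formula for $\nu^\varrho_{(a,0)}$ established in the previous lemma. Since $E$ is the union of the two non-negative coordinate half-axes and $\nu^\varrho_{(a,0)}$ is absolutely continuous with respect to one-dimensional Lebesgue measure on each half-axis, the integral splits as
\begin{align*}
	\int_E f(y_1,y_2)\,\nu^\varrho_{(a,0)}(d(y_1,y_2))
	&= \int_0^\infty f(y_1,0)\,\frac{p^2\sqrt{1-\varrho^2}\,a^{p-1}y_1^{p-1}}{\pi(y_1^p-a^p)^2}\,dy_1\\
	&\quad + \int_0^\infty f(0,y_2)\,\frac{p^2\sqrt{1-\varrho^2}\,a^{p-1}y_2^{p-1}}{\pi(y_2^p+a^p)^2}\,dy_2,
\end{align*}
where I write $p=p(\varrho)$ for brevity.

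In each of the two summands I would perform the substitution $y_i = a z_i$, so that $dy_i = a\,dz_i$. Under this change the numerator $y_i^{p-1}\,dy_i$ becomes $a^{p}z_i^{p-1}\,dz_i$, while the denominator $(y_i^p \pm a^p)^2$ becomes $a^{2p}(z_i^p \pm 1)^2$, and the overall prefactor picks up a factor $a^{p-1}\cdot a^{p}/a^{2p} = a^{-1}$. This shows the integrand transforms exactly into $a^{-1}$ times the density of $\nu^\varrho = \nu^\varrho_{(1,0)}$ at $z_i$, so each half-axis contribution equals $\tfrac{1}{a}\int_0^\infty f(az_i \cdot e_i)\,\nu^\varrho(\cdots)$ with $e_i$ the corresponding coordinate direction.

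Adding the two contributions back together gives
\begin{align*}
	\int_E f(y_1,y_2)\,\nu^\varrho_{(a,0)}(d(y_1,y_2))
	= \frac{1}{a}\int_E f(az_1, az_2)\,\nu^\varrho(d(z_1,z_2)),
\end{align*}
which is the claim. I do not anticipate any real obstacle: the argument is a bookkeeping exercise because the density has the homogeneous form $a^{p-1}\phi(y/a)$ (up to the sign in the denominator distinguishing the two axes), and the scaling $y \mapsto a z$ absorbs all the $a$-dependence into the single factor $1/a$. The only thing worth double-checking is that the map $(y_1,y_2)\mapsto(ay_1,ay_2)$ preserves $E$ for $a>0$, which is obvious from the definition of $E$, so the substitution is well defined and the bijection of the two axes with themselves is respected.
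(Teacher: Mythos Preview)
Your proof is correct and follows exactly the same approach as the paper: split the integral over $E$ into the two half-axes, plug in the explicit density of $\nu^\varrho_{(a,0)}$, and perform the substitution $y_i = a z_i$ on each piece to reduce to the density of $\nu^\varrho = \nu^\varrho_{(1,0)}$ with the leftover factor $1/a$. Your bookkeeping of the powers of $a$ matches the paper's computation.
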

	\begin{proof}
		Splitting $E$ in the two positive parts of the axes, the claim follows from a change of variables in the third line of the following computation:
		\begin{align*}
			&\quad\int_E f(y_1,y_2)\nu_{(a,0)}^\varrho(d(y_1,y_2))\\
			&=\int_E f(y_1,0)\nu_{(a,0)}^\varrho(d(y_1,0))+\int_E f(0,y_2)\nu_{(a,0)}^\varrho(d(0,y_2))\\
			&=\int_0^\infty f(y_1,0){p(\varrho)^2\sqrt{1-\varrho^2}} a^{p(\varrho)-1}\frac{y_1^{p(\varrho)-1}}{\pi\big(y_1^{p(\varrho)}-a^{p(\varrho)}\big)^2}\,dy_1\\
&\quad +\int_0^\infty f(0,y_2){p(\varrho)^2\sqrt{1-\varrho^2}} a^{p(\varrho)-1}\frac{y_2^{p(\varrho)-1}}{\pi\big(y_2^{p(\varrho)}+a^{p(\varrho)}\big)^2}\,dy_2\\
			&=\frac{1}{a}\int_0^\infty f(ay_1,0){p(\varrho)^2\sqrt{1-\varrho^2}}\frac{y_1^{p(\varrho)-1}}{\pi\big(y_1^{p(\varrho)}-1\big)^2}\,dy_1+\frac{1}{a}\int_0^\infty f(0,ay_2){p(\varrho)^2\sqrt{1-\varrho^2}} \frac{y_2^{p(\varrho)-1}}{\pi\big(y_2^{p(\varrho)}+1\big)^2}\,dy_2\\
			&=\frac{1}{a}\int_E f(ay_1,ay_2)\nu^\varrho (d(y_1,y_2)).
		\end{align*}
	\end{proof}

\subsubsection{Poissonian Integral Equations}\label{sec:spde}
	The aim of this section is to discuss properly the objects appearing in Theorem \ref{0} and to give elements of the proof. For convenience of the reader not familiar with jump diffusions we added a (very brief) summary to the appendix.\\
	Let $\mathcal N$ be a Poisson point process on $S\times E\times (0,\infty)\times (0,\infty)$ with intensity measure
	\begin{align}\label{N}
	\mathcal N'(\{k\},d(y_1,y_2),dr,ds)=  \nu^\varrho(d(y_1,y_2))\,dr\,ds, \;\;\forall k\in S.
\end{align}
The Poisson random measure ${\mathcal N}$ can be interpreted as a collection of independent  Poisson point measures 
$\{\mathcal N(\{k\},\cdot,\cdot,\cdot), k\in S\}$ on $E\times (0,\infty)\times (0,\infty)$  running independently at each site $k\in S$. 
	Then at each site $k\in S$, the basic jump measure $\nu^\varrho$ will be used to determine the target point of a jump from $E$ to $E$ at that site. To incorporate a state-dependent jump rate the $r$-component will be used. That is, as the jump intensity will depend on the current state of the system before time $s$, which we denote by $(U_{s-},V_{s-})$,  we define the intensities
	\begin{align}\label{intensity}
		I_s(k)&=\begin{cases}
		        	\frac{\mathcal AV_{s-}(k)}{U_{s-}(k)}&:U_{s-}(k)>0,\\
		        	\frac{\mathcal AU_{s-}(k)}{V_{s-}(k)}&:V_{s-}(k)>0,\\
			0&:U_{s-}(k)=V_{s-}(k)=0.
		       \end{cases}
	\end{align}
	Let us take a closer look at (\ref{intensity}), and assume for a moment that the current state at site $k$ is $(U_{s-}(k),0)$. Then the intensity of jumps at $k$ at time $s$ is high if $U_{s-}(k)$ is small compared to the total size of the population of ``type $V$" at neighboring sites.
	
	Next, we need to specify the integrand that describes the jumps of $\textrm{SBM}_\infty$ at an atom of ${\mathcal N}$ at $(k,(y_1,y_2),s,r)$:
	\begin{align*}
		J\big(y_1,y_2,U_{s-}(k),V_{s-}(k)\big)=y_2 \left(V_{s-}(k)\atop U_{s-}(k)\right)+(y_1-1)\left(U_{s-}(k)\atop V_{s-}(k)\right)
	\end{align*}
	so that at an atom $(k,(y_1,y_2),s,r)$ of $\mathcal N$ the system in state $(U_{s-},V_{s-})$ changes at site $k$ via one of the following transitions:
	\begin{align}\label{X}
		\begin{cases}
	                \left(U_{s-}(k)\atop 0\right)\mapsto \left(y_1U_{s-}(k)\atop 0\right)&:y=\left(y_1\atop 0\right),\\
&\\
                       	\left(U_{s-}(k)\atop 0\right)\mapsto \left(0\atop y_2 U_{s-}(k)\right)&:y=\left(0\atop y_2\right), \\
&\\
	                \left(0\atop V_{s-}(k)\right)\mapsto \left(0\atop y_1 V_{s-}(k)\right)&:y=\left(y_1\atop 0\right),\\
&\\
	                \left(0\atop V_{s-}(k)\right)\mapsto \left(y_2 V_{s-}(k)\atop 0\right)&:y=\left(0\atop y_2\right).
		\end{cases}
	\end{align}
	The second and fourth cases will be referred to as change of type as the jump changes the current state from one axis to the other. 
	Next, after compensating the jump term and adding an additional drift term, we are ready to define the system of Poissonian equations:	
	\begin{definition}		
		The system of Poissonian integral equations, indexed by the possibly infinite set $S$,
		\begin{align}\label{eqn:st}\begin{split}
			U_t(k)&=U_0(k)+\int_0^t\mathcal A U_s(k)\,ds\\
				&+\int_0^t\int_0^{I_s(k)}\int_{E}\big(y_2 V_{s-}(k)+(y_1-1)U_{s-}(k)\big)(\mathcal{N-N'})(\{k\},d(y_1,y_2),dr,ds),\\
			V_t(k)&=V_0(k)+\int_0^t\mathcal A V_s(k)\,ds\\
				&+\int_0^t\int_0^{I_s(k)}\int_{E}\big(y_2 U_{s-}(k)+(y_1-1)V_{s-}(k)\big)(\mathcal{N-N'})(\{k\},d(y_1,y_2),dr,ds),\end{split}
		\end{align}
		or in short
		\begin{align*}\begin{split}
		\left(U_t(k)\atop V_t(k)\right)&=\left(U_0(k)\atop V_0(k)\right)+\left(\int_0^t\mathcal{A}U_s(k)\,ds\atop \int_0^t\mathcal{A}V_s(k)\,ds\right)\\
		&\quad+\int_0^t\int_0^{I_s(k)}\int_E \left(y_2\left(V_{s-}(k)\atop V_{s-}(k)\right)+(y_1-1)\left(U_{s-}(k)\atop V_{s-}(k)\right)\right) (\mathcal{N-N'})(\{k\},d(y_1,y_2),dr,ds)		
		\end{split}		\end{align*}
		will be called \textbf{infinite rate symbiotic branching SPDE}.
	\end{definition}
	
	For a further discussion and connections of the above Poissonian SPDE to the standard voter process we refer to the next section.
	
	\begin{thm}\label{exis}
		Let $\varrho\in (-1,1)$ and suppose $(U_0,V_0)\in L^{2,E}_\beta$. Then Equation (\ref{eqn:st}) admits a weak solution with paths almost surely in $D([0,\infty),L_\beta^{2,E})$.
	\end{thm}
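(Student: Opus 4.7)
I would construct a weak solution to (\ref{eqn:st}) by a double approximation: truncating the lattice $S$ to finite subsets $S_n \uparrow S$ and truncating the singular measure $\nu^\varrho$ away from its pole at $(1,0)$ to a finite measure $\nu^{\varrho,\delta}$. For each pair $(n,\delta)$, the equation reduces to a finite-dimensional jump SDE with continuous drift $\mathcal A$ restricted to $S_n$ and bounded jump intensity $I_s(k)$ at each site, whose existence up to a blow-up time is classical (see the appendix, or Theorem~IV.9.1 of Ikeda--Watanabe); non-explosion follows from moment estimates, stopping at the first time $\langle U^{n,\delta}_t,\beta\rangle + \langle V^{n,\delta}_t,\beta\rangle$ exceeds $N$ and letting $N\to\infty$, in the style of the proof of Theorem~\ref{ex:dis}. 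The desired solution is then built as a limit point of $(U^{n,\delta}, V^{n,\delta})$ on a suitably enlarged probability space carrying $\mathcal N$.

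\textbf{Passing to the limit.} To send $\delta \to 0$, observe that the second-order pole of $\nu^\varrho$ at $(1,0)$ is exactly balanced by the linear factor $(y_1-1)U_{s-}(k)$ in the integrand, so $\int_{|y_1-1|<\delta}(y_1-1)^2\,\nu^\varrho(d(y_1,0)) \to 0$ as $\delta \to 0$. This yields $L^2$-convergence of the compensated Poisson integrals, hence convergence of $(U^{n,\delta}, V^{n,\delta})$ to a limit $(U^n, V^n)$ satisfying the spatial truncation of (\ref{eqn:st}). To send $n \to \infty$, tightness in $D([0,\infty), L^2_\beta)$ follows from uniform $p$-moment bounds for some $p \in (1, p(\varrho))$, in parallel with (\ref{amo}); these bounds transfer from the finite-rate bounds of Theorem~\ref{thm:curve} to the infinite-rate approximations via the scaling identity of Lemma~\ref{l} and the fact that $p(\varrho) > 1$ throughout $\varrho \in (-1,1)$. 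A continuous-mapping argument applied to the compensated Poisson integral identifies any limit point with a weak solution of (\ref{eqn:st}), with $\mathcal N$ constructed as a measurable functional of the limit.

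\textbf{Main obstacle.} The delicate point is verifying that the limit actually takes values in $L^{2,E}_\beta$, i.e.\ that $U_t(k) V_t(k) = 0$ for all $t$ and $k$, rather than merely solving (\ref{eqn:st}) in the larger space $L^2_\beta$. The approximations $(U^{n,\delta}, V^{n,\delta})$ do not live in $E^S$: the continuous drift always pushes the state off the axes, and only the infinite jump rate near $(1,0)$, which emerges in the limit $\delta \to 0$, restores the $E$-valued constraint. Following Step~3 of the proof of Theorem~\ref{thm:2}, the argument should establish stochastic boundedness of the pre-limit products $U^{n,\delta}_s(k) V^{n,\delta}_s(k)$ uniformly in $(n,\delta)$ by combining the moment estimates above with the scaling identity of Lemma~\ref{l} applied to the jump part, and then conclude $\int_0^t (U_s(k)V_s(k) \wedge 1)\,ds = 0$ almost surely for the limit; together with right-continuity this forces $(U_t(k), V_t(k)) \in E$ for all $t$, yielding paths in $D([0,\infty), L^{2,E}_\beta)$ and completing the construction.
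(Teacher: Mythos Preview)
Your overall strategy --- truncate the site space, truncate the jump measure near its pole, establish moment bounds in the range $p\in(1,p(\varrho))$, and pass to the limit via tightness --- matches the paper's approach. However, you miss two concrete modifications that the paper uses and that your construction cannot do without.

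First, your claim that the approximating system has ``bounded jump intensity $I_s(k)$'' is false: the intensity in (\ref{intensity}) blows up as the nonzero coordinate tends to zero, so even with a finite jump measure $\nu^{\varrho,\delta}$ jumps can accumulate. The paper truncates the intensity as well, replacing $I$ by $I^\epsilon$ with $U_{s-}(k)\vee\epsilon$ (resp.\ $V_{s-}(k)\vee\epsilon$) in the denominator, which is what makes the interlacing construction of the approximants legitimate. Second, and more importantly, the paper does \emph{not} let the approximations drift off $E$ and then try to recover the constraint in the limit. Instead it chooses an \emph{asymmetric} truncation $\nu^{\epsilon,\varrho}=\nu^\varrho\mathbf 1_{\{y_1-1>\epsilon\text{ or }1-y_1>\epsilon'\}}$ with $\epsilon,\epsilon'$ tuned so that $\int_E(y_1-1)\,\nu^{\epsilon,\varrho}(d(y_1,y_2))=0$; this forces the drift--compensator cancellation to persist at the approximate level, so the approximants $(U^{m,\epsilon},V^{m,\epsilon})$ already live in $E^S$. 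This sidesteps your ``main obstacle'' entirely and also resolves a problem you did not flag: once your approximations leave $E$, both coordinates are positive and the very definition of $I_s(k)$ in (\ref{intensity}) no longer applies, so your approximating equations are not even well-posed as written.

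A smaller issue: your $L^2$-convergence argument for $\delta\to 0$ handles the small jumps near the pole, but for $\varrho\ge 0$ the measure $\nu^\varrho$ has infinite second moment in the tail (this is precisely the $p(\varrho)\le 2$ regime), so the global compensated integral is not in $L^2$ and you cannot identify the limit through square-function convergence alone. The paper instead appeals to convergence of semimartingale characteristics, which requires only the $p$th-moment control you already invoke.
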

	\begin{proof}[Sketch of Proof]
	The proof is along the lines of Sections 2 and 3 of \cite{KM10b} for $\varrho=0$. The basic idea is to construct the approximating sequence of equations with the following modifications:
	\begin{enumerate}
		\item truncate the infinite index set (compare with the proof of Theorem \ref{ex:dis})),
		\item modify the jump measure $\nu^\varrho$ by truncating its jumps near the pole $(1,0)$. This makes the modified jump measure finite.
		\item modify the jump intensity $I$ by truncation its big values. 
	\end{enumerate}
	With these truncations there are only finitely many jumps up to any $t\geq 0$ so that solutions can be built merely ``by hands" via interlacing. To be more precise, we consider equations on subsets $S^m\subset S$ with $m$ 		elements and we redefine
	\begin{align*}
		\nu^{\epsilon,\varrho}(d(y_1,y_2))=\nu^\varrho(d(y_1,y_2)) \mathbf 1_{\{y_1-1>\epsilon, 1-y_1>\epsilon'\}}.
	\end{align*}
	This asymmetric truncation around $(1,0)$ is slightly strange but if $\epsilon,\epsilon'$ are chosen such that $\int_E (y_1-1)\nu^{\epsilon,\varrho}(d(y_1,y_2))=0$ then solutions stay on the boundary $E$ of the first quadrant, i.e. the drift does not push solutions into the interior. The modified state-dependent jump rate becomes  
	\begin{align*}
		I^\epsilon_t(k)&=\begin{cases}
		        	\frac{\mathcal AV^{m,\epsilon}_{t-}(k)}{U^{m,\epsilon}_{t-}(k)\vee \epsilon}&:U^{m,\epsilon}_{t-}(k)>0,\\
		        	\frac{\mathcal AU^{m,\epsilon}_{t-}(k)}{V^{m,\epsilon}_{t-}(k)\vee \epsilon}&:V^{m,\epsilon}_{t-}(k)>0,\\
			\frac{1}{\epsilon}\mathcal AU^{m,\epsilon}_{t-}(k)+\frac{1}{\epsilon}\mathcal AV^{m,\epsilon}_{t-}(k)&:U^{m,\epsilon}_{t-}(k)=V^{m,\epsilon}_{t-}(k)=0.
		       \end{cases}
	\end{align*}
	Replacing $\nu^\varrho$ by $\nu^{\epsilon,\varrho}$, $I$ by $I^\epsilon$ and adding additional jumps away from $(0,0)$, solutions $(U^{m,\epsilon},V^{m,\epsilon})$ can be constructed by hands via a Poisson point measure $\mathcal N$ since jumps do not accumulate. By definition it seems clear (ignoring the additional jumps away from zero which will vanish in the limit) that a possible limit for $m\to \infty$ and $\epsilon\to 0$ fulfills Equation (\ref{eqn:st}). This can be made rigorous via the 	method of characteristics for semimartingales and classical convergence theorems. To ensure that the sequence converges, tightness in the Skorohod space is justified by Aldous' criterion. Up to now the arguments copied directly those of \cite{KM10b}, with only difference in replacing $\nu^0$ by $\nu^\varrho$. To apply Aldous' criterion one then has to find $p$th moment estimates which again can be obtained as in \cite{KM10b} by replacing  their arguments relying on $p\in (1,2)=(1,p(0)
 )$ by the same arguments based on $p\in (1,p(\varrho))$.
\end{proof}	
	In order to clarify the connection of Equation (\ref{eqn:st}) to infinite rate symbiotic branching processes we give more elaborate arguments  for the next result which for $\varrho=0$ was proved in Lemma 3.12 of \cite{KM10b}.
	\begin{prop}\label{thm:s}
		Let $\varrho\in (-1,1)$ and suppose $(U,V)$ is a weak solution to Equation (\ref{eqn:st}) taking values in $L_\beta^{2,E}$, then $(U,V)$ is a solution of the martingale problem (\ref{mp}). 
	\end{prop}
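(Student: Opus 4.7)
The plan is to apply It\^o's formula for semimartingales with jumps to $F(U_t,V_t,y_1,y_2)$, using the explicit semimartingale decomposition that (\ref{eqn:st}) provides: a continuous drift $(\mathcal A U_s,\mathcal A V_s)\,ds$ plus a pure-jump compensated Poissonian martingale. The continuous finite-variation contribution, obtained via the chain rule together with the first-order partial-derivative formulas of Lemma~\ref{l6}, is
\[
\sum_{k\in S}\bigl[\partial_{x_1(k)}F\cdot\mathcal A U_s(k)+\partial_{x_2(k)}F\cdot\mathcal A V_s(k)\bigr]\,ds = F(U_s,V_s,y_1,y_2)\,\langle\langle\mathcal A U_s,\mathcal A V_s,y_1,y_2\rangle\rangle_\varrho\,ds,
\]
which is exactly the compensator appearing in (\ref{mp}). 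The remaining terms of It\^o's formula are a stochastic integral against $\mathcal N-\mathcal N'$ (a local martingale) and the predictable compensator of the It\^o jump-remainder. The whole task is therefore to verify that this last piece vanishes identically.

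For the vanishing, I would first exploit the following harmonicity observation. Since $(y_1,y_2)\in L^{f,E}$, the pointwise identity $y_1(k)y_2(k)=0$ holds for every $k$, so the second part of Lemma~\ref{l6} says that the operator $\tfrac12\partial_{x_1(k)}^2+\tfrac12\partial_{x_2(k)}^2+\varrho\,\partial_{x_1(k)}\partial_{x_2(k)}$ annihilates $F$. Hence, viewed as a function $\tilde F(z_1,z_2)$ of only the $k$-th pair of coordinates with all others frozen, $F$ is harmonic for the generator of a $\varrho$-correlated planar Brownian motion. Optional stopping at the exit time $\tau$ from the first quadrant gives $\tilde F(u,v)=\int_E \tilde F\,dQ^\varrho_{(u,v)}$, and differentiating in $v$ at $v=0$, together with Definition~\ref{defj}, produces the key identity
\[
\int_E[\tilde F(z_1,z_2)-\tilde F(u,0)]\,\nu^\varrho_{(u,0)}(dz) = \partial_{z_2}\tilde F(u,0),
\]
and its mirror at states $(0,v)$. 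The martingale property of each Brownian coordinate, pushed through the same limiting procedure, yields the first-moment identities $\int y_2\,\nu^\varrho(dy)=1$ and $\int(y_1-1)\,\nu^\varrho(dy)=0$ (the latter understood as a principal value near the pole at $(1,0)$).

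I would then evaluate the jump compensator site by site. At site $k$ in state $(u,0)$ with $u>0$, a jump transports the $k$-th coordinate to $(y_1 u,y_2 u)$ at rate $I_s(k)\,\nu^\varrho(dy)\,dr$. The scaling relation of Lemma~\ref{l} re-expresses the integral of $\tilde F(\text{new})-\tilde F(u,0)$ against $\nu^\varrho$ as $u$ times an integral against $\nu^\varrho_{(u,0)}$, which the key harmonicity identity collapses to $u\,\partial_{z_2}\tilde F(u,0)$. The linear It\^o-remainder terms contribute $-u\,\partial_{z_2}\tilde F(u,0)$ by the two first-moment identities, and the two cancel exactly, even before multiplying by $I_s(k)=\mathcal A V_s(k)/u$. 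A symmetric computation at states $(0,v)$ and the trivial vanishing at $(0,0)$ (where $I_s(k)=0$) complete the site-by-site argument. Hence $M^{\varrho,\infty}$ is a local martingale; boundedness of $|F|$ (its exponent has non-positive real part) together with moment bounds on $\langle U_t+V_t,\beta\rangle$ inherited via Theorem~\ref{thm:2} and the critical curve of Theorem~\ref{thm:curve} upgrade it to a genuine martingale.

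The main obstacle I expect is the exact three-way cancellation in the jump compensator, where the scaling of $\nu^\varrho$, the harmonicity of $F$ in the $k$-th coordinate, and the precise form of $I_s(k)$ must interlock to yield an identity rather than a bound. A secondary analytic subtlety is that the pole of $\nu^\varrho$ at $(1,0)$ forces a careful principal-value treatment of $\int(y_1-1)\,\nu^\varrho(dy)=0$ and a verification that the It\^o-remainder vanishes to the required order at that pole, so that the separate terms may legitimately be combined.
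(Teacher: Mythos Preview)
Your approach is correct and rests on the same two ideas as the paper: It\^o's formula for jump semimartingales identifies the drift via Lemma~\ref{l6}, and the jump compensator vanishes because $y_1(k)y_2(k)=0$ makes $F$ harmonic at site $k$ for the $\varrho$-correlated Brownian generator, combined with the definition of $\nu^\varrho$ as $\lim_{\epsilon\to 0}\epsilon^{-1}Q^\varrho_{(1,\epsilon)}$. The paper organizes the vanishing argument differently: rather than splitting the It\^o remainder into a jump-difference piece (handled by your harmonicity-plus-differentiation identity) and a linear piece (handled by the first-moment identities) and showing they cancel, it keeps the full remainder
\[
e^{\langle\langle J(y,x),z\rangle\rangle_{\varrho,k}}-1-\langle\langle J(y,x),z\rangle\rangle_{\varrho,k}
\]
together, rewrites its $\nu^\varrho$-integral as $\lim_{\epsilon\to 0}\epsilon^{-1}E^{(1,\epsilon)}[\,\cdot\,]$, and evaluates this expectation via the same martingale argument (as in Proposition~\ref{prop:mart1}) applied to the pair $(W^1_{t\wedge\tau},W^2_{t\wedge\tau})$; the resulting expression is $O(\epsilon^2)$, so the limit vanishes. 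The paper's packaging buys you exactly the analytic point you flag as a subtlety: the full remainder vanishes quadratically at the pole $(1,0)$ and is absolutely $\nu^\varrho$-integrable, so no principal-value interpretation of $\int(y_1-1)\,\nu^\varrho(dy)$ is needed. Your split pieces are only conditionally integrable there and require the careful truncation-and-limit treatment you anticipate.
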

	\begin{proof}[Sketch of Proof]
		To show that the martingale problem (\ref{mp}) is satisfied by weak solutions to (\ref{eqn:st}), one can proceed similarly to the proof of Proposition \ref{prop:mart1} by applying It\^o's formula to 								$F(U_t,V_t,z_1,z_2)$ for compactly supported $z_1,z_2$. Let us first define the integrands of the Poissonian integrals as
		\begin{align*}
			\left(J_1\big(y_1,y_2,U_{s-}(k),V_{s-}(k)\big)\atop J_2\big(y_1,y_2,U_{s-}(k),V_{s-}(k)\big)\right):=y_2 \left(V_{s-}(k)\atop U_{s-}(k)\right)+(y_1-1)\left(U_{s-}(k)\atop V_{s-}(k)\right)
		\end{align*}
		and abbreviate for $(x_1,x_2),(z_1,z_2)\in E^S$
		\begin{align*}
			&\quad \langle\langle x_1,x_2,z_1,z_2 \rangle\rangle_{\varrho,k}\\
			&=-\sqrt{1-\varrho}\big( x_1(k)+x_2(k)\big)\big(z_1(k)+z_2(k)\big)+i\sqrt{1+\varrho}\big( x_1(k)-x_2(k)\big)\big(z_1(k)-z_2(k)\big).
		\end{align*}
		 First, by It\^o's formula for non-continuous semimartingales and the notation for partial derivatives already used in Lemma \ref{l6}, we obtain for $(z_1,z_2)\in L^{f,E}$
		\begin{align*}
			&\quad F(U_t,V_t,z_1,z_2)\\
			&=F(U_0,V_0,z_1,z_2)+\sum_{k\in S}\int_0^t\frac{\partial}{\partial x_1(k)}e^{\langle\langle U_s,V_s,z_1,z_2\rangle\rangle_\varrho}\mathcal A U_s(k)\,ds\\
			&\quad+\sum_{k\in S}\int_0^t\frac{\partial}{\partial x_2(k)}e^{\langle\langle U_s,V_s,z_1,z_2\rangle\rangle_\varrho}\,\mathcal A V_s(k)\,ds\\
			&\quad+\sum_{k\in S}\int_0^t\int_0^{I_s(k)}\int_E\bigg[e^{\langle\langle (U_{s},V_{s})+J(y_1,y_2,U_{s},V_{s}),z_1,z_2\rangle\rangle_{\varrho,k}}-e^{\langle\langle U_{s},V_{s},z_1,z_2\rangle\rangle_{\varrho,k}}\\
			&\quad\quad-J_1(y_1,y_2,U_{s}(k),V_{s}(k))\frac{\partial}{\partial U(k)}e^{\langle\langle U_{s},V_{s},z_1,z_2\rangle\rangle_{\varrho,k}}\\
			&\quad\quad-J_2(y_1,y_2,U_{s}(k),V_{s}(k))\frac{\partial}{\partial V(k)}e^{\langle\langle U_{s},V_{s},z_1,z_2\rangle\rangle_{\varrho,k}}\bigg]\mathcal N'(\{k\},d(y_1,y_2),dr,ds)\\
			&\quad+\text{local martingale}
		\end{align*}
		which, carrying out the partial derivatives via Lemma \ref{l6} and plugging-in the definition of $\mathcal N'$, yields
		\begin{align*}
			&\quad F(U_t,V_t,z_1,z_2)\\
			&= F(U_0,V_0,z_1,z_2)+\int_0^t F(U_s,V_s,z_1,z_2)\langle\langle \mathcal A U_s, \mathcal A V_s,z_1,z_2\rangle\rangle_\varrho\,ds+\textrm{local martingale}\\
			&\quad+\sum_{k\in S}\int_0^te^{\langle \langle U_{s},V_{s},z_1,z_2\rangle\rangle_{\varrho,k}}I_s(k)
			\int_E \bigg[e^{\langle\langle J(y_1,y_2,U_{s},V_{s}),z_1,z_2\rangle\rangle_{\varrho,k}}-1\\
			&\quad\quad-\langle\langle J\big(y_1,y_2,U_{s},V_{s}\big),z_1,z_2\rangle\rangle_{\varrho,k}\bigg]\nu^\varrho(d(y_1,y_2))\,ds.
		\end{align*}
		The righthand side is already close to the martingale problem (\ref{mp}) if we can show that the sum of the integrals with respect to $\nu^\varrho(d(y_1,y_2))\,ds$ equals to zero and the local martingale is a martingale. Note that this is very similar to the proof of Proposition
		\ref{prop:mart1}. To prove the first assertion,  note that, by definition of $\nu^\varrho$,
		\begin{align*}
			&\quad\int_E \bigg[e^{\langle\langle J(y_1,y_2,U_{s},V_{s}),z_1,z_2\rangle\rangle_{\varrho,k}}-1-\langle\langle J(y_1,y_2,U_{s},V_{s}),z_1,z_2\rangle\rangle_{\varrho,k}\bigg]\nu^\varrho(d(y_1,y_2))\\
			&=\lim_{\epsilon\to 0} \frac{1}{\epsilon}E^{(1,\epsilon)}\left[e^{\langle\langle J(W^1_\tau,W^2_\tau,U_{s},V_{s}),z_1,z_2\rangle\rangle_{\varrho,k}}-1-\langle\langle J(W^1_\tau,W^2_\tau,U_{s},V_{s}),z_1,z_2\rangle\rangle_{\varrho,k}\right]
		\end{align*}
		so that we are done if we can show that, for any $(x_1,x_2),(z_1,z_2)\in E^S$ and $\epsilon>0$,
		\begin{align}\label{eh}
			E^{(1,\epsilon)}\left[e^{\langle\langle J(W^1_\tau,W^2_\tau,x_1,x_2),z_1,z_2\rangle\rangle_{\varrho,k}}-1-\langle\langle J(W^1_\tau,W^2_\tau,x_1,x_2),z_1,z_2\rangle\rangle_{\varrho,k}\right]=0.
		\end{align}
		But this identity holds, if $\tau$ is replaced by $t>0$, by It\^o's lemma as in the proof of Proposition \ref{prop:mart1}. The necessary arguments that justify the changes of limits and integration are as in Lemma 3.11 of \cite{KM10b}. Those incorporate the exit-time exit-point equivalence of Lemma \ref{ete} for $\varrho\neq 0$. The martingale property for the local martingale then follows as in the proof of Lemma 3.12 of \cite{KM10b} from first moment estimates that are not affected by $\varrho\in (-1,1)$.
	\end{proof}
	
	In the proof Proposition~\ref{thm:s} we did not utilize the particular form of the intensity $I_t(k)$ so that arbitrary changes in the intensity seem to lead to other solutions of the martingale problem, and by this seemingly imply a contradiction to uniqueness of the martingale problem (\ref{mp}). However, this chain of reasoning is not true because of the particular choice~(\ref{intensity}) for $I_t(k)$  forces solutions to have paths in $L_\beta^{2,E}$ and uniqueness for the martingale problem (\ref{mp}) only holds for solutions with paths restricted to $L_\beta^{2,E}$.\\	
	Let us make this more precise: suppose that $U_{T}(k)=0$ for some random time $T>0$ and some $k\in S$. From the density of the basic jump measure $\nu^\varrho$ it is clear that for some positive time no jump changing the types occurs
	(by finiteness of $\nu^\varrho$ restricted to the $y_2$, jumps that change types come with finite rate). Hence, for some positive random time $\delta$, no jump occurs so that
	\begin{align*}
		U_{T+r}(k)=0,\quad r\in [0,\delta].
	\end{align*}
	In particular, this shows that $\mathcal N'$ must be such that
	\begin{align*}
		\int_T^{T+r} \mathcal A U_s(k)\,ds-\int_T^{T+r}\int_0^{I_s(k)}\int_E J_1(y_1,y_2,U_{s-}(k),V_{s-}(k))\mathcal N'(\{k\},d(y_1,y_2),dr,ds)=0,
	\end{align*}
	for all $r\in [0,\delta].$	We now briefly show that the choice (\ref{intensity}) indeed does the job:
	\begin{align*}
		&\quad \int_T^{T+r}\int_E J_1(y_1,y_2,U_{s-}(k),V_{s-}(k))\,I_s(k)\,\nu^\varrho(d(y_1,y_2))\,ds\\
		&=\int_T^{T+r}\int_0^\infty y_2 V_{s-}(k)\frac{\mathcal AU_{s-}(k)}{V_{s-}(k)}{p(\varrho)^2}\sqrt{1-\varrho^2} \frac{y_2^{p(\varrho)-1}}{\pi\big(y_2^{p(\varrho)}+1\big)^2}\,dy_2\,ds\\
		&=\int_T^{T+r}{\mathcal AU_{s-}(k)}\,ds,
	\end{align*}
	because
	\begin{align*}
		\int_0^\infty y_2{p(\varrho)^2} \sqrt{1-\varrho^2}\frac{y_2^{p(\varrho)-1}}{\pi\big(y_2^{p(\varrho)}+1\big)^2}\,dy_2&=\lim_{\epsilon\to 0}\frac{1}{\epsilon}E^{1,\epsilon}\big[W^2_\tau\big]=\lim_{\epsilon\to 0}\frac{1}{\epsilon}\lim_{t\to\infty}E^{1,\epsilon}\big[W^2_{t\wedge \tau}\big]=\lim_{\epsilon\to 0}\frac{1}{\epsilon}\epsilon=1.
	\end{align*}
	Note that here the superscript in $W$ refers to the second coordinate of the pair of Brownian motions and not to the second moment.
	The first equality follows from the definition of $\nu^\varrho$; the second follows from the martingale convergence theorem for which the uniform integrability is ensured by the upper bound
	\begin{align*}
		E^{1,\epsilon}\big[\big(W_{t\wedge \tau}^2\big)^{p(\varrho)-\mu}\big]\leq E^{1,\epsilon}\big[\tau^{\frac{p(\varrho)-\mu}{2}}\big]<\infty,
	\end{align*}
	where the positive constant $\mu$ is chosen sufficiently small such that $p(\varrho)-\mu>1$ (existence of $\mu$ is ensured by the exit-time exit-point equivalence of Lemma \ref{ete}).\\
	
	With the Poissonian construction of $\textrm{SBM}_\infty$ in hand we now sketch a proof of Theorem \ref{pro:1}.
	
	\begin{proof}[Sketch of Proof for Theorem \ref{pro:1}]
		Existence of solutions to the martingale problem follows from Theorem~\ref{exis} and Proposition~\ref{thm:s}.\\
 The uniqueness proof is inspired by the proof of Lemma \ref{uniq} for $\gamma<\infty$ based on 		self-duality. Here, we sketch the chain of arguments of Section 4 in \cite{KM10b} which can be copied line by line while replacing the duality function in \cite{KM10b} by the $\varrho$-dependent duality function $F$ defined in (\ref{F}).\\
		\textbf{Step 1:} For compactly supported initial conditions $(\tilde U_0,\tilde V_0)$ solutions $(\tilde U,\tilde V)$ to the martingale problem are constructed via the Poissonian equations (\ref{eqn:st}). From the first moment estimates one obtains that solutions decay sufficiently fast at infinity. \\
		\textbf{Step 2:} First moment bounds for arbitrary solutions of the martingale problem are derived by differentiating the Laplace transform part (see Lemma 4.2 of \cite{KM10b} for $\varrho=0$). \\
		\textbf{Step 3:} The crucial part is to derive the self-duality relation
		\begin{align*}
			\E[F(U_t,V_t,\tilde U_0,\tilde V_0)]=\E\big[F(U_0,V_0,\tilde U_t,\tilde V_t)\big]
		\end{align*}
		between the two independent solutions $(U,V)$ and $(\tilde U,\tilde V)$ starting at $(U_0,V_0)\in L^{2,E}_\beta$ and $(\tilde U_0,\tilde V_0)\in L^{f,E}$. Now, as in the proof of Corollary \ref{uniq}, self-duality determines the one-dimensional laws $(U_t,V_t)$ along the lines of the proof of Proposition 4.7 in \cite{KM10b} for $\varrho=0$. Standard theory (see Theorem 4.4.2 of \cite{EK86}) allows us to extend the uniqueness of $1$-dimensional distributions to uniqueness of finite dimensional distributions. Finally, the strong Markov property for $(U,V)$ follows from measurability in the initial condition which is inherited from the finite jump rate approximation processes.
	\end{proof}
	
Combining Theorems~\ref{pro:1}, \ref{exis} and Proposition~\ref{thm:s} we immediately get the  following
theorem. 

\begin{thm}
\label{thm:111}
Let $\varrho\in(-1,1)$ and $(U_0,V_0)\in L^{2,E}_{\beta}$. Then there exists unique weak solution to~(\ref{eqn:st}) which  is  the unique solution to the 
martingale problem from Theorem~\ref{pro:1}.
\end{thm}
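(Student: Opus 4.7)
The plan is to read Theorem~\ref{thm:111} as a bookkeeping corollary that assembles Theorem~\ref{exis}, Proposition~\ref{thm:s} and Theorem~\ref{pro:1} into a single existence-and-uniqueness statement for the Poissonian SPDE (\ref{eqn:st}). Since each ingredient has already been established, the task here is mainly to verify that the hypotheses match up and that ``weak uniqueness'' for (\ref{eqn:st}) can be pulled back from well-posedness of the martingale problem (\ref{mp}).

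For existence, I would simply apply Theorem~\ref{exis}: under the assumptions $\varrho\in(-1,1)$ and $(U_0,V_0)\in L^{2,E}_\beta$, it directly produces a weak solution $(U,V)$ of (\ref{eqn:st}) whose paths almost surely lie in $D([0,\infty),L^{2,E}_\beta)$. In particular, the solution actually takes values in the restricted state-space $L^{2,E}_\beta$, which is exactly what will allow the next step to be invoked.

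For the bridge to the martingale problem, I would feed this solution into Proposition~\ref{thm:s}: any $L^{2,E}_\beta$-valued weak solution of (\ref{eqn:st}) is automatically a solution of the martingale problem (\ref{mp}). Thus $(U,V)$ is a solution of the martingale problem from Theorem~\ref{pro:1}, which proves the existence half of Theorem~\ref{thm:111} and also guarantees that the martingale-problem formulation is non-vacuous.

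For uniqueness, I would argue by reduction: suppose $(U^{(i)},V^{(i)})$ for $i=1,2$ are two weak solutions of (\ref{eqn:st}) with common initial datum $(U_0,V_0)\in L^{2,E}_\beta$, both with paths in $D([0,\infty),L^{2,E}_\beta)$. Applying Proposition~\ref{thm:s} to each, their laws on $D([0,\infty),L^{2,E}_\beta)$ both solve the martingale problem of Theorem~\ref{pro:1}. By the well-posedness asserted there (uniqueness of the martingale problem on the restricted state-space), the two laws coincide, which is precisely uniqueness of the weak solution to (\ref{eqn:st}). The only point requiring care is that Proposition~\ref{thm:s} really does apply to every admissible weak solution and not only to the one constructed in Theorem~\ref{exis}; but the proposition is stated for any $L^{2,E}_\beta$-valued weak solution, so no further work is needed. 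The genuine difficulty behind this theorem lives entirely in its three ingredients --- in particular in the self-duality-based uniqueness argument for the martingale problem --- while the corollary itself amounts to an assembly.
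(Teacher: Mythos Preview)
Your proposal is correct and matches the paper's own treatment: the paper states the theorem as an immediate consequence of combining Theorem~\ref{exis}, Proposition~\ref{thm:s}, and Theorem~\ref{pro:1}, exactly as you outline. Your additional care in spelling out the uniqueness reduction (two weak solutions both solving the well-posed martingale problem must coincide in law) is a welcome elaboration of what the paper leaves implicit.
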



\subsection{Infinite Rate Symbiotic Branching Processes and Voter Processes II}\label{sec:voter2}
	The infinite rate symbiotic branching processes $\textrm{SBM}_\infty$ were characterized in previous subsections via various approaches. 
	In this final section we describe $\textrm{SBM}_\infty$ from the viewpoint of the standard voter process which is closely related to symbiotic branching with $\varrho=-1$ as we have already seen in the Section \ref{sec:voter1}.
	\smallskip
	
	For 	the rest of this section we stick to $\mathcal A=\Delta$ on $S=\Z^d$ for convenience.
	\smallskip
	
	We start with restating Theorem \ref{thm:2} for the $\varrho=-1$ case. However, note that we additionally have to assume $u^\gamma_0+v^\gamma_0\equiv 1$ since we cannot use the self-duality anymore as for $\varrho=-1$ it 	does not carry enough information to characterize the full law of the limiting process $(U,V)$. Under this additional assumption we can rely on the folklore results mentioned at the very end of Section \ref{sec:voter1} whereas for general initial conditions a different approach should be developed.
\begin{thm}\label{thm:12}
	Suppose $\varrho=-1$ and for any $\gamma>0$, $(u^{\gamma}_t,v^{\gamma}_t)_{t\geq 0}$ solves $\textrm{SBM}_{\gamma}(-1)$ and the initial condition $(u^\gamma_0,v^\gamma_0)=(U_0,V_0)$ do not depend on $\gamma$. If furthermore we suppose
	$$(U_0(k),V_0(k))\in \{(0,1),(0,1)\},\quad k\in\Z^d,$$
	then,	for any sequence $\gamma_n$ tending to infinity, we have the convergence in law
	\begin{align*}
		(u^{\gamma_n},v^{\gamma_n})\Longrightarrow (U,V),\quad n\to\infty,
	\end{align*}
	in $D([0,\infty),L_\beta^2)$ equipped with the Meyer-Zheng ``pseudo-path" topology. Here, $U$ is a standard voter process and $V=1-U$ its reciprocal voter process (i.e. opinions $1$ and $0$ are interchanged).
\end{thm}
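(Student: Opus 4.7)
The plan is to reduce the statement to the classical infinite-rate limit of the one-type stepping stone model recalled at the end of Section \ref{sec:voter1}, and then to upgrade moment convergence to convergence in the Meyer-Zheng pseudo-path topology. For $\varrho=-1$ the correlation structure (\ref{ss2}) forces $B^2_\cdot(k)=-B^1_\cdot(k)$ almost surely at each site, since $\E[(B^1_t(k)+B^2_t(k))^2]=2t(1+\varrho)=0$. Adding the two components of $\textrm{SBM}_\gamma(-1)$ gives
\[
d\bigl(u^\gamma_t(k)+v^\gamma_t(k)\bigr)=\Delta\bigl(u^\gamma_t(k)+v^\gamma_t(k)\bigr)\,dt,
\]
and the assumption $(U_0(k),V_0(k))\in\{(0,1),(1,0)\}$ entails $u^\gamma_0+v^\gamma_0\equiv\1$; uniqueness for the deterministic discrete heat equation on $L^2_\beta$ therefore yields $u^\gamma_t+v^\gamma_t\equiv\1$ for all $t\geq 0$. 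Substituting $v^\gamma=\1-u^\gamma$ into the first SDE shows that $u^\gamma$ is a weak solution of the stepping stone equation
\[
du^\gamma_t(k)=\Delta u^\gamma_t(k)\,dt+\sqrt{\gamma u^\gamma_t(k)(1-u^\gamma_t(k))}\,dB^1_t(k),
\]
with binary initial data $u^\gamma_0=U_0\in\{0,1\}^{\Z^d}$. It therefore suffices to prove $u^\gamma\Rightarrow U$ in the Meyer-Zheng topology with $U$ the standard voter process on $\Z^d$, after which $V=\1-U$ follows tautologically.

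Next, I would establish convergence of finite-dimensional distributions via Shiga's delayed coalescing duality for the stepping stone model as recalled in Section \ref{sec:voter1}: for any sites $k_1,\ldots,k_m\in\Z^d$ and $t\geq 0$,
\[
\E\bigl[u^\gamma_t(k_1)\cdots u^\gamma_t(k_m)\bigr]=\E\Big[\prod_{i}u^\gamma_0(\xi^{i,\gamma}_t)\Big],
\]
where $\xi^{1,\gamma},\ldots,\xi^{m,\gamma}$ are continuous-time simple random walks coalescing after having spent together an exponential time of rate $\gamma$. As $\gamma\to\infty$ the coalescence becomes instantaneous, and by the voter moment formula (\ref{mmm}) the righthand side converges to $\E^{U_0}[U_t(k_1)\cdots U_t(k_m)]$. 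Since $u^\gamma_t(k)\in[0,1]$, this moment convergence is equivalent to weak convergence of all finite-dimensional marginals of $u^\gamma$ to those of $U$.

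To upgrade finite-dimensional convergence to convergence in the Meyer-Zheng pseudo-path topology I would follow Step 1 of the proof of Theorem \ref{thm:2} in strongly simplified form. The boundedness $u^\gamma\in[0,1]$ makes every estimate trivial: all $p$th moments are available independently of the critical curve $p(\varrho)$, and $|\Delta u^\gamma|$ is uniformly bounded, so the drift $\int_0^\cdot\Delta u^\gamma_s(k)\,ds$ has uniformly bounded total variation on compacts; the martingale part is handled by the martingale tightness criterion of \cite{MZ}. These uniform bounds give tightness in the pseudo-path topology, and combined with the previous step and the characterization of Meyer-Zheng convergence as convergence in Lebesgue measure, they force every subsequential limit to coincide with the voter process $U$.

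The main conceptual obstacle is that the self-duality argument underlying Theorem \ref{thm:2} collapses at $\varrho=-1$: the imaginary part $\sqrt{1+\varrho}$ of the duality function $F$ degenerates, so that the self-duality no longer separates laws, which is precisely why the theorem needs the extra assumption $(U_0,V_0)\in\{(0,1),(1,0)\}^{\Z^d}$ rather than holding on all of $L^{2,E}_\beta$. Under that restriction the perfect anti-correlation reduction to a bounded one-type equation replaces the lost self-duality by the much simpler coalescing-walk moment duality, and the a priori bound $u^\gamma\in[0,1]$ bypasses the delicate exit-time moment estimates of \cite{KM10b}; what remains to invoke is the classical folklore that the infinite-rate stepping stone limit is the voter process.
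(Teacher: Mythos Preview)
Your proposal is correct and follows essentially the same route as the paper's own sketch: reduce to the one-type stepping stone model via the perfect anti-correlation and the constraint $u^\gamma+v^\gamma\equiv\1$, invoke the classical delayed-coalescence moment duality to get finite-dimensional convergence to the voter process (the paper cites this as folklore from Section~10.3.1 of \cite{dawson} and Section~\ref{sec:voter1}), and obtain Meyer-Zheng tightness as in Theorem~\ref{thm:2} with the boundedness $u^\gamma\in[0,1]$ replacing the exit-time moment estimates. Your final paragraph explaining why the self-duality degenerates at $\varrho=-1$ and why the restriction to $\{(0,1),(1,0)\}$-valued initial data is therefore needed is a useful clarification that the paper only alludes to in passing.
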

\begin{convention}
\label{2907_1}
In what follows the pair of voter processes constructed in the above theorem will be called $\textrm{SBM}_\infty(-1)$.
\end{convention}
\begin{proof}[Sketch of Proof]
	As discussed in the end of Section \ref{sec:inter}, with the additional assumption on the initial conditions, $u^\gamma$ is a solution to the stepping stone model of Example \ref{ex1} and $v^\gamma=1-u^\gamma$. For $\gamma$ tending to infinity, a well-known result (see for instance Section 10.3.1 of \cite{dawson}) states that the finite dimensional distributions of solutions to the stepping stone model converge to those of the standard voter process; solutions are bounded and the moments converge as discussed in Section \ref{sec:voter1}. Tightness in the Meyer-Zheng ``pseudo-path" topology follows as for $\varrho\in (-1,1)$.
\end{proof}
	
	To understand $\textrm{SBM}_\infty$ and the voter process in a unified framework let us first summarize.
	{The infinite rate symbiotic branching processes $\textrm{SBM}_\infty(\varrho)$ are the weak limits of $\textrm{SBM}_\gamma(\varrho)$, as $\gamma\rightarrow \infty$,
		\begin{itemize}
			\item for $\varrho \in (-1,1)$, by Theorem \ref{thm:2},
			\item for $\varrho=-1$ and $U_0+V_0=\mathbf{1}$, by Theorem \ref{thm:12}.
		\end{itemize}
	} 
	\smallskip
		
	 A unified representation can be given with the Poissonian approach developed above if $\nu^\varrho$ is extended to $\varrho=-1$ as
	 \begin{align*}
	 	\nu^{-1}(d(v_1,v_2))=\delta_{(0,1)}(v_1,v_2).
	 \end{align*}
	With the intensities $I_s(k)$ defined in (\ref{intensity}) and the Poisson point processes $\mathcal N$ with intensity measure $\mathcal N'$ as in (\ref{N}) we can extend Theorem~\ref{thm:111} as follows:
	
	\begin{thm}
		Suppose $\varrho\in [-1,1)$, $(U_0,V_0)\in L^{2,E}_\beta $ and for $\varrho=-1$ assume additionally $U_0+V_0\equiv 1$. Then the infinite rate symbiotic branching process  $(U,V)$ with initial condition $(U_0,V_0)$ 			coincides in law with the unique weak solution to (\ref{eqn:st}).
	\end{thm}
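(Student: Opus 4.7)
The plan splits into two cases according to the value of $\varrho$.

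For $\varrho\in(-1,1)$, the conclusion is immediate: Theorem~\ref{thm:2} identifies $\textrm{SBM}_\infty(\varrho)$ with the unique solution of the martingale problem~(\ref{mp}), and Theorem~\ref{thm:111} asserts that this object is precisely the unique weak solution of~(\ref{eqn:st}). No further work is needed here.

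The substantive case is $\varrho=-1$ with $U_0+V_0\equiv 1$. First I would observe that membership in $L^{2,E}_\beta$ combined with $U_0+V_0\equiv 1$ forces $(U_0(k),V_0(k))\in\{(0,1),(1,0)\}$ for every $k\in\Z^d$, so the initial data lies in the voter configuration space $\Sigma=\{0,1\}^{\Z^d}$. Next I would analyse the Poissonian SPDE~(\ref{eqn:st}) with the extended jump measure $\nu^{-1}=\delta_{(0,1)}$. Since $\nu^{-1}$ charges only the point $(y_1,y_2)=(0,1)$, the only transitions allowed by~(\ref{X}) are the coordinate swaps
\[
\left(U_{s-}(k)\atop 0\right)\mapsto\left(0\atop U_{s-}(k)\right),\qquad\left(0\atop V_{s-}(k)\right)\mapsto\left(V_{s-}(k)\atop 0\right),
\]
which preserve the strength of the opinion. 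Consequently $\{(0,1),(1,0)\}^{\Z^d}$ is stable under the dynamics and, in particular, the drift $\int_0^t\Delta U_s(k)\,ds$ cannot alone move the state off this discrete set. Under this constraint the state-dependent intensity simplifies: when $(U_{s-}(k),V_{s-}(k))=(1,0)$ one has
\[
I_s(k)=\frac{\Delta V_{s-}(k)}{U_{s-}(k)}=\frac{1}{2d}\,\#\bigl\{i:|i-k|=1,\,V_{s-}(i)=1\bigr\},
\]
with the symmetric expression for $(0,1)$. This is precisely the voter flip rate $c(k,\eta)$ of the generator~(\ref{gener}), so~(\ref{eqn:st}) with $\nu^{-1}$ reduces to the pure-jump Markov dynamics of the standard voter process on $U$ together with its reciprocal $V=1-U$.

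It remains to establish existence and uniqueness of a weak solution to~(\ref{eqn:st}) in this degenerate regime and to match it with $\textrm{SBM}_\infty(-1)$ of Convention~\ref{2907_1}. Existence is actually easier than in Theorem~\ref{exis}: because $\nu^{-1}$ is a probability measure and $I_s(k)\leq 1$ for voter configurations, jumps occur locally at finite rate, so no truncation near a pole is required and the solution can be built pathwise by a Harris-type graphical interlacing. Uniqueness of the weak solution within $\{0,1\}^{\Z^d}$-valued paths follows from the classical well-posedness of the voter generator (see \cite{L}). The identification with $\textrm{SBM}_\infty(-1)$ is then immediate from Theorem~\ref{thm:12}, which characterises that limit as the pair (voter process, reciprocal voter process).

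The only conceivable obstacle is checking that the framework of~(\ref{eqn:st})---the form of $I_s(k)$, the convention~(\ref{intensity}), and the state-space $L^{2,E}_\beta$---continues to make sense as $\nu^\varrho$ collapses to a Dirac mass at $\varrho=-1$. Because the jump structure becomes \emph{finite} in this limit (in contrast to the infinite measures appearing for $\varrho\in(-1,1)$, which required the exit-time exit-point equivalence and $p(\varrho)>1$ for the moment estimates), all verifications reduce to straightforward accounting: the single computation analogous to~(\ref{eh}), namely that the compensator matches the drift when $\nu^\varrho$ is replaced by $\delta_{(0,1)}$, is immediate since only one jump target contributes.
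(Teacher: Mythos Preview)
Your proposal is correct and follows essentially the same route as the paper: for $\varrho\in(-1,1)$ you invoke Theorem~\ref{thm:111}, and for $\varrho=-1$ you (i) note existence is easier because $\nu^{-1}$ is finite, (ii) observe that the jumps are pure swaps preserving $\{(0,1),(1,0)\}^{\Z^d}$, (iii) show the drift and compensator cancel so the equation reduces to a pure-jump system with voter rates $c(k,\eta)$, and (iv) conclude via well-posedness of the voter martingale problem and Theorem~\ref{thm:12}. The only presentational slip is that you assert stability of $\{(0,1),(1,0)\}^{\Z^d}$ \emph{before} verifying the drift--compensator cancellation; since the cancellation computation itself uses $U_s+V_s\equiv 1$, the logically clean order (which the paper adopts) is to carry out that computation first---using $\Delta U_s(k)+\Delta V_s(k)=0$---and only then conclude that the dynamics reduce to pure jumps and hence preserve the voter configuration space.
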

	Note that the additional assumption on the initial condition is not necessary for  Equation (\ref{eqn:st}) to have weak solutions. We believe that also the convergence of $\textrm{SBM}_\gamma(-1)$ to the solutions of (\ref{eqn:st}) holds without the restriction.
	\begin{proof}
		For the case $\varrho\in (-1,1)$ the theorem is nothing else but Theorem \ref{thm:111} so that we only need to discuss the extension to $\varrho=-1$.\\
	Existence of a weak solution to~(\ref{eqn:st}), for $\varrho=-1$, can be verified as sketched in the proof of Theorem \ref{exis} for $\varrho\in (-1,1)$; since the jump measure $\nu^{-1}$ is finite the proof is simpler since no truncation procedure for $\nu^{-1}$ is needed.\\
	To identify the weak solutions  to~(\ref{eqn:st}) with $\textrm{SBM}_\infty(-1)$ it suffices, by Theorem \ref{thm:12}, to show that, for any weak solution $(U,V)$ to~(\ref{eqn:st}),  $U$ is a voter process and $V=1-U$.
	 We use two facts: first, the jumps preserve the property $(U_t(k), V_t(k))\in \{(1,0),(0,1)\}$ for all $t\geq 0, k\in \Z^d$ and, secondly, the drift and the compensator integral cancel each other. To establish the first, note that 
	the choice of $\nu^{-1}$ implies that always $y_1=0$ and $y_2=1$ so that the 		only transitions are (compare with (\ref{X}))
	\begin{align*}
	                \left(U_{s-}(k)\atop 0\right)&\mapsto \left(0\atop U_{s-}(k)\right),\\
                       	\left(0\atop V_{s-}(k)\right)&\mapsto \left(V_{s-}(k)\atop 0\right),
		\end{align*}
	or, simply,
	\begin{align*}
	                \left(1 \atop 0\right)&\mapsto \left(0\atop 1\right),\\
                       	\left(0\atop 1\right)&\mapsto \left(1\atop 0\right).
		\end{align*}
		
	The latter follows from the simple computation
			\begin{align*}
			&\quad\int_0^t\int_0^{I_s(k)} \int_E y_2\left(V_{s-}(k)\atop U_{s-}(k)\right)+(y_1-1)\left(U_{s-}(k)\atop V_{s-}(k)\right) \mathcal{N'}(\{k\},d(y_1,y_2),dr,ds)\\
				&=\int_0^t \left(V_{s}(k)-U_{s}(k)\atop U_{s}(k)-V_{s}(k)\right) \left(\frac{\Delta U_{s}(k) }{V_{s}(k)}\mathbf 1_{\{U_{s}(k)=0\}}+\frac{\Delta V_{s}(k)}{U_{s}(k)}\mathbf 1_{\{V_{s}(k)= 0\}}\right)ds\\
				&=\left(\int_0^t{\Delta U_{s}(k) }\mathbf 1_{\{U_s(k)=0\}}-{\Delta V_{s}(k) }\mathbf 1_{\{V_s(k)=0\}}\,ds\atop \int_0^t{-\Delta U_{s}(k) }\mathbf 1_{\{U_s(k)=0\}}+{\Delta V_{s}(k) }\mathbf 1_{\{V_s(k)=0\}}\,ds\right)\\
			&=\left(\int_0^t{\Delta U_{s}(k) }\,ds\atop \int_0^t{\Delta V_{s}(k)}\,ds\right)
		\end{align*}
		for which we used $U_s(k), V_s(k)\in \{0,1\}$ and
		\begin{align*}
			\Delta U_s(k)+\Delta V_s(k)=\sum_{|j-k|=1}\frac{1}{2d} \big(U_s(j)+V_s(j)\big)-\big(U_s(k)+V_s(k)\big)=\sum_{|j-k|=1}\frac{1}{2d}-1=0.
		\end{align*}
		Hence, canceling the compensator integral with the drift shows that Equation (\ref{eqn:st}) can be written equivalently in the simplified form
	\begin{align*}
		\left(U_t(k)\atop V_t(k)\right)=\left(U_0(k)\atop V_0(k)\right)+\int_0^t\int_0^{I_s(k)}\int_E\left( \left(V_{s-}\atop U_{s-}\right)-\left(U_{s-}\atop V_{s-}\right) \right)\mathcal{N}(\{k\},d(y_1,y_2),dr,ds).
	\end{align*}
	Since the configurations only change by a jump and the jumps only switch $0$ to $1$ and vice versa one can already guess that both coordinates are reciprocal voter processes. To make this precise we apply It\^o's formula to functions of $(U_t,V_t)$ and derive that $(U,V)$ satisfies the martingale problem for the standard voter process. It suffices to carry this out for $U$
since we already know that $V_t=1-U_t\,,$ for all $t\geq 0$. \\
 Let us fix a test-function $f:\{(0,1)\}^{\Z^d}\to \R$ that only depends on finitely many coordinates $k\in K$, $\# K<\infty$, and apply It\^o's formula to $f(U_t)$
 to obtain
	\begin{align*}
		f( U_t)&=f(U_0)+\sum_{k\in K}\int_0^t\int_0^{I_s(k)}\int_E\left[f\big(( U_{s-})^{(k)}\big)-f\big( U_{s-}\big)\right]\mathcal N(\{k\},d(y_1,y_2),dr,ds).
	\end{align*}
	We denoted again by $\eta^{(k)}$ the configuration that is obtained from the configuration $\eta$ flipping only the opinion at site $k$. Adding and subtracting the compensated integral leads to
	\begin{align*}
		f( U_t)
		&=f(U_0)+\sum_{k\in K}\int_0^t\int_0^{I_s(k)}\int_E \left[f\big(( U_{s-})^{(k)}\big)-f\big( U_{s-}\big)\right]\mathcal{(N-N')}(\{k\},d(y_1,y_2),dr,ds)\\
		&\quad+\sum_{k\in K}\int_0^tI_s(k)\left[f\big(( U_{s-})^{(k)}\big)-f\big( U_{s-}\big)\right]\,ds.
	\end{align*}
	 Next, we use that for all $s\geq 0, k\in \Z^d$ we have $U_{s-}(k), V_{s-}(k)\in \{0,1\}$ to obtain
	\begin{align*}
		I_s(k)&=\begin{cases}
		        	\frac{\Delta V_{s-}(k)}{U_{s-}(k)}&:U_{s-}(k)>0\\
		        	\frac{\Delta U_{s-}(k)}{V_{s-}(k)}&:V_{s-}(k)>0
		       \end{cases}\\
		       &=\begin{cases}
		        	\Delta V_{s-}(k)&:U_{s-}(k)=1\\
		        	\Delta U_{s-}(k)&:U_{s-}(k)=0
		       \end{cases}\\
		       &=\frac{1}{2d}\#\big\{\text{neighbors of the voter at }k \text{ who have an opinion different than his at time  } {s-}\big\}\\
		       &=c(k, U_s).
	\end{align*}
	Plugging-in, we proved that
	\begin{align*}
		M_t^f:=f(U_t)-f(U_0)-\int_0^t\sum_{k\in K}c(k,U_s)\left[f\big(( U_{s})^{(k)}\big)-f\big( U_{s}\big)\right]ds
	\end{align*}
	is a local martingale and since everything is bounded it is, in fact,  a martingale. This shows that $U_t$ has the generator (\ref{gener}) of the voter process.\\
	Well-posedness for this martingale problem implies the weak uniqueness statement of the theorem for $\varrho=-1$.
\end{proof}
	
	Finally, we want to explain that the extended choice of $\nu^\varrho$ is more natural than it appears on first view. There are two good reasons. First, going back to Definitions \ref{defj} and \ref{jumpmeasure} let us see what we get for $\varrho=-1$:
		\begin{align*}
			\lim_{\epsilon\to 0} \frac{Q^{-1}_{(1,\epsilon)}}{\epsilon}=\lim_{\epsilon\to 0}\frac{1}{\epsilon}\left(\frac{\epsilon}{1+\epsilon}\delta_{(0,1+\epsilon)}+\frac{1}{1+\epsilon}\delta_{(1+\epsilon,0)}\right)=\delta_{(0,1)}+\infty \delta_{(1,0)},
	\end{align*}
	since for completely negatively correlated Brownian motions $(B^1,B^2)$ started at $(u,v)$ the exit-measure from the first quadrant is $\frac{v}{u+v}\delta_{(0,u+v)}+\frac{u}{u+v} \delta_{(u+v,0)}$. Secondly, a more careful look at the density of $\nu^\varrho$ for $\varrho\in (-1,1)$ shows that the mass accumulates at $(1,0)$ and $(0,1)$ since $p(\varrho)$ explodes for $\varrho$ tending to $-1$. More precisely, $\nu^\varrho$ converges in the vague topology (extended to the completion of $\R$) to $\delta_{(0,1)}+\infty\delta_{(1,0)}$.	
	Unfortunately, both justifications lead to $\nu^{-1}$ with an additional infinite atom at $(1,0)$. 
	Luckily, the infinite atom at $(1,0)$ has no impact on the Poissonian equations since the integrand of (\ref{eqn:st}) vanishes if $y_2=0$ 	and $y_1=1$. We believe that some rigorous work on this observation might lead to some interesting results.\\
	
\textbf{This brief discussion explains the natural unification of the family $\textrm{SBM}_\infty$ with the voter process at its boundary $\varrho=-1$ and justifies our interpretation of $\textrm{SBM}_\infty(\varrho)$ as generalized voter process, given below Theorem \ref{0}.}
	
\section*{Acknowledgement}	
	LD acknowledges an ESF Grant ``Random Geometry of Large Interacting Systems and Statistical Physics" and hospitality of the Technion. LM acknowledges hospitality of the Universit\'e Paris 6.


\appendix

\section{A Very Rough Primer on Jump SDEs}
	Symbiotic branching models are by definition solutions of (possibly infinite) systems of ordinary stochastic differential equations
	\begin{align}\label{dif}
		dX_t=b(X_t)dt+\sigma(X_t)dB_t.
	\end{align}
	Interestingly, the infinite rate analogues that have been defined so far as solutions to exponential martingale problems can be represented as solutions to jump-type stochastic differential equations. The most straight-forward 		generalization of (\ref{dif}) is
	\begin{align}\label{dif2}
		dX_t=b(X_t)dt+\sigma(X_t)dB_t+c(X_{t-})dL_t
	\end{align}
	for a L\'evy process $L_t$. The modeling drawback of (\ref{dif2}) is that once $L_t$ has a jump $x$, then $X_t$ has a jump $c(X_{t-})x$. If the jumps of the solution process are meant to depend on the jumps of the jump-measure 	in a non-linear way, other concepts are needed. One way to model such processes is to replace the jump noise by a general compensated random measure:
	\begin{align*}
	 	dX_t=b(X_t)dt+\sigma(X_t)dB_t+c(X_{t-},x)(\mathcal N-\mathcal N')(dt,dx).
	\end{align*}
	This notion of jump-type stochastic differential equation is needed for our purposes. Unfortunately, the basic jump measure $\nu^\varrho$ of Theorem \ref{0} has a second order singularity at $(1,0)$ and a polynomial 		decreasing tail which for $\varrho\geq 0$ prevents existence of second moments. This causes the general second moment integration theory to collapse here and the abstract martingale integration theory with respect to compensated random measures comes into play. To guide the reader unfamiliar with those concepts we briefly recall some core definitions and concepts.
	
	\medskip
	First, suppose $\mathcal N(dt,dx)(\omega)$ is a Poisson point measure on $[0,\infty)\times \R^d$ with compensator measure $\lambda$ on a stochastic basis $(\Omega, \mathcal F, (\mathcal F_t)_{t\geq 0}, \P)$, i.e. for all measurable sets $A$ with $\lambda(A)<\infty$, $\mathcal N([0,t],A)$ is a Poisson process in $t$ with parameter $\lambda(A)$ such that for disjoint sets $A_1,A_2$ the processes $\mathcal N([0,t],A_i)$ are independent. Defining $\mathcal N'([0,t],A)=t\lambda(A)$, it then follows that the compensated process
	\begin{align}\label{m}
		(\mathcal{N-N'})([0,t],A):=\mathcal N([0,t],A)-\mathcal N'([0,t],A)
	\end{align}
	is a martingale.	This property motivates the name martingale measure for the random measure $\mathcal N-\mathcal N'$. Given a predictable integrand $H(s,x)(\omega)$, defined on the stochastic basis of the driving point 	process, one then aims to define the integral process
	\begin{align*}
		\int_0^t\int_{\R^d}H(s,x)(\mathcal N-\mathcal N')(ds,dx)
	\end{align*}
	of $H$ against the compensated martingale measure $\mathcal N-\mathcal N'$ via an $L^2$-approximation procedure for integrands in the space
	\begin{align*}
		\mathcal H=\left\{H:\E\left[\int_0^t\int_{\R^d}H(s,x)^2\mathcal N'(ds,dx)\right]<\infty\right\}. 
	\end{align*}
	Integrals are then defined as limits of integrals of simple processes against $\mathcal N-N'$. The martingale property of the driving measure implies that the stochastic integral itself is a square-integrable martingale.
	For a more detailed introduction we refer the reader for instance to the overview article \cite{B} or \cite{IW}.
	
	\medskip	
	Unfortunately, it turns out that for $\varrho\geq 0$ our basic jump measure $\nu^\varrho$ has too heavy tails so that the integrands fail to be members of $\mathcal H$. Fortunately, abstract martingale theory allows for an integration theory with respect to compensated random measures without requiring $H\in \mathcal H$. The rest of this section consists of a short summary of the integration theory developed in Section II.1d of \cite{JS07}. To make our life simpler (and this is what we need) we assume that the appearing compensator measure is absolutely continuous in $t$ so that the presentation is slightly simplified in contrast to the general theory presented in \cite{JS07}.\\	
	Suppose that for a subspace $E$ of $\R^n$, $\mathcal N$ is an integer valued random measure (not necessarily Poissonian) on $[0,\infty)\times E$, i.e. a family of measures $\mathcal N(dt,dx)(\omega)$ on $([0,\infty)\times E, \mathcal B\otimes \mathcal E)$ such that $\mathcal N(\{0\}\times E)(\omega)=0$ almost surely, i.e. no jump at time $0$, and that $\mathcal N(\cdot)$ is an integer. 
	Building upon (\ref{m}) the concept of a compensator measure for general random point processes is generalized as follows: $\mathcal N'$ is the up to a null set unique (now possibly random) measure such that
	\begin{align}\label{abcd}
		W\ast \mathcal N_t-W\ast \mathcal N'_t
	\end{align}
	is a martingale null at zero for a suitably class of test-functions $W$. Here, $\cdot \ast\cdot_t$ stands for pathwise Lebesgue integration on $E\times [0,t]$. As, by assumption, the jump measure $\mathcal N$ is integer valued it should come as no surprise that $\mathcal N$ may be regarded as counting measure for the jumps of an auxiliary $E$-valued optional process $\beta_t$, i.e.
	\begin{align*}
		\mathcal N([0,t]\times A)(\omega)=\sum_{s\leq t} \mathbf 1_{A}(\Delta\beta_s(\omega)).
	\end{align*}	
	 With this notation in hand we can proceed with the abstract definition of the stochastic integral (see Definition II.1.27b) of \cite{JS07}). Absolute continuity in time of the compensator implies that $\mathcal N'(\{t\}\times dx)(\omega)=0$ almost surely so that the quantity $\hat W$ in \cite{JS07} vanishes. The set of possible integrands is changed to 
	\begin{align*}
		\mathcal  G&=\Bigg\{H:\E\Big[\sum_{s\leq t}H^2(s,\Delta \beta_s)\mathbf 1_{\{\Delta \beta_s\neq 0\}}\Big]^{1/2}<\infty\Bigg\}
	\end{align*}
	and the stochastic integral 
	\begin{align*}
		H\ast (\mathcal N-\mathcal N')_t=\int_0^t\int_E H(s,x)(\mathcal N-\mathcal N')(ds,dx) 
	\end{align*}
	is defined to be the unique (up to indistinguishable) purely discontinuous local martingale $X_t$ such that 
	\begin{align}\label{rem:1}
		\Delta X_\cdot\quad \text{and}\quad H(\cdot,\Delta \beta_\cdot)\mathbf 1_{\{\Delta \beta_\cdot\neq 0\}}\quad\text{are indistinguishable.}
	\end{align}
	Hence, if $\mathcal N$ has an atom at $(s,x)$, the stochastic integral $H\ast (\mathcal N-\mathcal N')$ has a jump $H(s-,x)$.
	Recall that by definition a purely discontinuous local martingale is required to be orthogonal to all continuous martingales but not to be pathwise everywhere discontinuous. For example, if $N_t$ is a standard Poisson process, the 	compensated process $N_t-t$ is purely discontinuous but far from being pathwise everywhere discontinuous.\\
	The integrability condition for class $\mathcal G$ is rather unsatisfactory as it involves the jump measure itself rather than only its compensator which might be more easy to handle. A characterization of the set $\mathcal G$ is 	given in Theorem II.1.33 of \cite{JS07}: it suffices to show that (recall that in our setting $\hat W$ of \cite{JS07} vanishes)
	\begin{align}\label{nec}\begin{split}
		\E\left[ \int_0^t \int _E H^2(s,x)\1_{\{|H(s,x)|\leq 1\}} \mathcal N'(ds,dx)\right]<\infty,\\
		\E\left[ \int_0^t \int _E |H(s,x)|\1_{\{|H(s,x)|\geq 1\}} \mathcal N'(ds,dx)\right]<\infty,\end{split}
	\end{align}
	showing in particular that $\mathcal G\subset \mathcal H$.	Finally, to motivate the naming ``stochastic integral'' for the abstract local martingale $H\ast (\mathcal N-\mathcal N')_t$, the following property should be mentioned. If the integrand is nice, that is,  additionally $\E[|H| \ast \mathcal N'_t]<\infty$, then both 	integrals against $\mathcal N$ and the compensator measure $\mathcal N'$ can be defined pathwise and
	\begin{align}\label{he}
		H\ast (\mathcal N-\mathcal N')_t=H\ast \mathcal N_t-H\ast \mathcal N'_t.
	\end{align}

\medskip

\end{document}